\newcommand{\sgn}{\text{sgn}}
\title[The Winfree model with a high-order uncertain couplings]{Measure-valued death state and local sensitivity analysis for  Winfree models with uncertain high-order couplings}
\author[Ha]{Seung-Yeal Ha}
\address[Seung-Yeal Ha]{\newline Department of Mathematical Sciences and Research Institute of Mathematics \newline Seoul National University, Seoul 08826, Republic of Korea}
\email{syha@snu.ac.kr}
\author[Kang]{Myeongju Kang}
\address[Myeongju Kang]{\newline Department of Finance and Big Data \newline Gachon University, Seongnam 13120, Republic of Korea}
\email{mathemjkang@gachon.ac.kr}
\author[Yoon]{Jaeyoung Yoon}
\address[Jaeyoung Yoon]{\newline {\color{black}Department of Mathematics, School of Computation, Information and Technology,\newline Technical University of Munich, Boltzmannstraße 3, 85748 Garching bei M\"unchen, Germany}}
\email{\color{black}wodud1516@gmail.com}
\author[Zanella]{Mattia Zanella}
\address[Mattia Zanella]{\newline Department of Mathematics\newline University of Pavia, Pavia 27100, Italy}
\email{mattia.zanella@unipv.it}
\newtheorem{theorem}{Theorem}[section]
\newtheorem{lemma}{Lemma}[section]
\newtheorem{corollary}{Corollary}[section]
\newtheorem{proposition}{Proposition}[section]
\newtheorem{remark}{Remark}[section]
\newtheorem{definition}{Definition}[section]
\newcommand{\bbr}{\mathbb R}
\newcommand{\bbt} {\mathbb T}
\def\charf {\mbox{{\text 1}\kern-.30em {\text l}}}
\newcommand{\newparallel}{\mathrel{\mathpalette\new@parallel\relax}}
\newcommand{\new@parallel}[2]{%
\begingroup
\resizebox{!}{\htz@}{\raisebox{\depth}{$\m@th#1/\mkern-5mu/$}}%
\endgroup
}
\begin{document}
\tikzstyle{block} = [rectangle, draw, 
text width=15em, text centered, rounded corners, minimum height=3em]
\tikzstyle{line} = [draw, -latex']

\date{\today}
\subjclass[2021]{39A30, 34E10, 65L05}
\keywords{Local sensitivity analysis, oscillator death, phase-coupled oscillators, synchronization, uncertainty quantification, Winfree model}

\thanks{Acknowledgment: The work of S.-Y. Ha is partially supported by National Research Foundation of (NRF-2020R1A2C3A01003881). {\color{black}J. Yoon acknowledges the support of the Alexander von Humboldt foundation.} M. Zanella acknowledges the support of the GNFM group of INdAM (National Institute of High Mathematics) and of the ICSC – Centro Nazionale di Ricerca in High Performance Computing, Big Data and Quantum Computing, funded by European Union – NextGeneration EU. M. Zanella wishes to acknowledge partial support by MUR-PRIN2022PNRR Project No. P2022Z7ZAJ. {\color{black}All authors contributed equally to this work.}}

\begin{abstract}
We study the measure-valued death state and local sensitivity analysis of the Winfree model and its mean-field counterpart with uncertain high-order couplings. The Winfree model is the first mathematical model for synchronization, and it can cast as the effective approximation of the pulse-coupled model for synchronization, and it exhibits diverse asymptotic patterns depending on system parameters and initial data. For the proposed models, we present several frameworks leading to oscillator death in terms of system parameters and initial data, and the propagation of regularity in random space. We also present several numerical tests and compare them with analytical results.
\end{abstract}

\maketitle

\centerline{\date}



\section{Introduction} \label{sec:1}
\setcounter{equation}{0}
Collective behaviors of interacting particle systems are often observed in nature. Among them, we are interested in synchronization in which oscillators exhibit the same rhythms due to weak interactions between constituent oscillators, e.g., flashing of fireflies and firing of neurons \cite{B-B,Ku2,Pe,P-R}, etc. Despite its ubiquity, mathematical modeling for synchronization was done by Arthur Winfree \cite{Wi2, Wi1} only in a half century ago. After Winfree's pioneering works, studies on emergent behavior \cite{GMP,HJM,HKPR,KR} and several variants have been proposed in literature, e.g. the Kuramoto model \cite{Ku2}.  In this work, we are mainly interested in the Winfree model. To set up the stage, we begin with a brief description.

Let $\theta_i = \theta_i(t)$ be the phase of the $i$-th Winfree oscillator on  the unit circle with the natural frequency $\nu_i$. Then, the Winfree model reads as
\begin{align}\label{A-1}
	\dot\theta_i = \nu_i +\frac{\kappa}{N}\sum_{j=1}^N S(\theta_i) I(\theta_j), \quad \forall~i\in[N] := \{1,\cdots, N\},
\end{align}
where $\kappa$ is the nonnegative coupling strength, and $S, I:\bbr~\to~\bbr$ are $2\pi$-periodic functions which are called sensitivity and influence functions, respectively, and they will be specified explicitly later.

In fact, the Winfree model \eqref{A-1} belongs to the class of \emph{phase-coupled model} for synchronization, which assumes that time-varying behavior of oscillators depends only on the current phase of oscillators by assuming the variation of amplitude to be unity. In contrast, there is another class of synchronization models, namely \emph{pulse-coupled model} taking into account the firing of oscillators. To name a few, integrate-and-fire model and the Peskin model correspond to the class of pulse-coupled model. These types of synchronization models are widely used to describe the synchronization of real neurons \cite{AS, B-B, Pe, P-R, Wi1}. However, it is very difficult to analyze a pulse-coupled model mathematically, since the influence function is usually described by Dirac-delta function $\delta_0$ so that the right-hand side of \eqref{A-1} provides a non-smooth vector field. To overcome this technical difficulty, authors in \cite{AS} approximated the pulse-coupled mechanism using the Winfree model \eqref{A-1} by employing following \emph{pulse-like} influence functions:
\begin{align} \label{A-2}
I_n(\theta) = a_n (1+\cos\theta)^n, \quad n \ge 1, \quad a_n := \frac{(2n)!!}{2^n(2n-1)!!},
\end{align}
where the coefficient $a_n$ is chosen to satisfy a normalization condition $\displaystyle \int_0^{2\pi}I_n(\theta)d\theta = 2\pi$. It is well-known that on $[-\pi, \pi]$, $I_n$ converges to $2\pi\delta_0$ in distribution as $n \to \infty$. This implies that $I_n$ approximates the impulse fired when oscillator touches zero. In addition, for the sensitivity function, they employed
\begin{align} \label{A-4}
	S(\theta) = -\sin\theta
\end{align}
to recast the Winfree model as a generalized Adler equation (see Section \ref{sec:2.1} for detail).  Now, we combine \eqref{A-2} and \eqref{A-4} to propose the Winfree model (\cite{AS}) with a high-order coupling:
\begin{align} \label{A-5}
	\dot{\theta}_i=\nu_i-\frac{\kappa}{N}\sin\theta_i\sum_{j=1}^NI_n(\theta_j), \quad \forall~i\in[N].
\end{align}
When we consider the real applications of \eqref{A-5},  the determination of the order $n$ is based on the phenomenology or modeler’s free will. Hence, the uncertainty (\cite{APZ,HJR}) in the order $n$ is intrinsic, and it is natural to ask how uncertainty in the order $n$ can affect the collective dynamics of system \eqref{A-5}. In this paper, we are interested in the Winfree model with uncertain high-order couplings:
\begin{align}
\begin{cases} \label{A-6}
\displaystyle \partial_t \theta_i(t,z)=\nu_i-\frac{a(z)\kappa}{N}\sin\theta_i(t,z)\sum_{j=1}^N\big(1+\cos\theta_j(t,z)\big)^z, \quad \forall~t>0, \\
\displaystyle \theta_i(0,z) = \theta^{\mathrm{in}}_i{\color{black}(z)}, \quad \forall~i\in[N],
	\end{cases}
\end{align}
where $z= z(\omega)$ is a $[1,\infty)$-valued random variable on a given probability space $(\Omega,\mathcal{F},\mathbb{P})$ and the normalizing factor $a(z)$ is given to satisfy the normalization condition:
\begin{align}\label{def_az}
	a(z) := \frac{2^z\left(\Gamma(z+1)\right)^2}{\Gamma(2z+1)}\quad\mbox{so that}\quad\int_0^{2\pi}a(z)(1+\cos\theta)^z d\theta=2\pi.
\end{align}
{\color{black}To briefly highlight the challenges in analysis, high-order couplings ($z>1$) disrupt the gradient flow ($z=1$), leading to a lack of uniform boundedness. Furthermore, the variation of $\theta$-dynamics in $z$ cannot be explicitly computed due to the complexity in the structure of $a(z)$. In detail, $\partial_za(z)$ is considered when we study the propagation of regularity in random space (see Section \ref{sec:5} for details). To address this difficulty, we adopt random variables $z$ in $L^\infty(\Omega)$ (See Definition \ref{D2.2}).} From now on, we call the system \eqref{A-6} as the {\it random Winfree model}.\newline
\indent {\color{black}Next, we} consider the kinetic counterpart of \eqref{A-6} which can be obtained as a mean-field approximation of \eqref{A-6}.  Let $f = f(t, \theta, \nu, z)$ be the probability density function of random Winfree oscillators at time $t\in[0, \infty)$, phase $\theta\in\bbt$, natural frequency $\nu\in\bbr$, and random variable $z\in[1,\infty)$. Then, the formal BBGKY hierarchy yields the following Vlasov-type equation \cite{HPZ}:
\begin{align}
\begin{cases} \label{A-7}
\displaystyle \partial_t f +\partial_\theta(f L[f]) = 0, \quad (t,\theta, \nu, z) \in \bbr_+ \times {\mathbb T} \times \bbr \times [1, \infty), \\
\displaystyle L[f](t, \theta, \nu, z) = \nu-\sigma(t,z) \sin\theta,\\
\displaystyle \sigma(t,z) = \kappa a(z) \int_{\mathbb T \times \mathbb R} (1+\cos\theta_*)^z f(t, \theta_*, \nu_*, z) g(\nu_*) d\nu_* d\theta_*,
\end{cases}
\end{align}
where $g: \bbr\to\bbr$ is the probability density function for a natural frequency $\nu$. We call the equation \eqref{A-7} as the {\it random kinetic Winfree model}. In this paper, we address the following two issues:
\vspace{0.5cm}
\begin{itemize}
\item
(Q1):~How does randomness characterized by random variable $z: \Omega \to [1, \infty)$ affect in the propagation of regularity in random space?
\vspace{0.2cm}
\item
(Q2):~Can we find an explicit measure-valued stationary death state (see Section \ref{sec:4})?
\end{itemize}
\vspace{0.4cm}

In this paper, we provide quantitative estimates and numeric tests for the above two questions (Q1) - (Q2). More precisely, our main results in this work can be categorized as follows.

The first set of results is concerned with the existence of oscillator death state for \eqref{A-6} and \eqref{A-7} (see Section \ref{sec:3.1} and Section \ref{sec:4}). For the Winfree model \eqref{A-6}, it suffices to check that there exists a bounded trapping set in which oscillators are attracted to it in finite or infinite time. In this case, all the rotation numbers $\{ \rho_i \}_{i\in[N]}$ (see \eqref{B-5-1} for definition) are zero:
\[ \rho_i \equiv 0, \quad \forall~i \in [N]. \]
For the construction of such bounded trapping set, we assume that initial data and coupling strength satisfy the following conditions:
\begin{align} \label{A-7-1}
\max_{i \in [N]}  |\theta_i^{\mathrm{in}}| < c < \pi, \quad \forall~i \in [N] \quad \mbox{and} \quad  \kappa>\frac{\|\mathcal{V}\|_\infty}{a(z)\sin c(1+\cos c)^z}, \qquad \mbox{almost surely}.
\end{align}
Under these conditions, all the phases will be trapped in a bounded region in $\bbr^N$ (see Proposition \ref{P2.1}), i.e.,
\[  \sup_{0 \leq t < \infty}  |\theta_i(t,z)| \leq c, \quad  \mbox{almost surely}. \]
Hence the random phase configuration $\Theta(t, z)$ approaches to death state asymptotically almost surely. Under the conditions \eqref{A-7-1}, we show that system \eqref{A-6} admits the existence of a unique equilibrium and that a dynamic solution converges to it exponentially fast almost surely  (see Proposition \ref{P3.1} and Theorem \ref{T3.1}). For the kinetic model \eqref{A-7}, the measure-valued stationary death state can be constructed explicitly based on the solvability of the following integral equation:
\begin{align} \label{A-8}
x = \kappa a(z) \int_{-\infty}^{\infty} \left( 1+\sqrt{1-\nu_*^2/x^2} \right)^z g(\nu_*) d\nu_*, \quad \forall~z \in [1, \infty).
\end{align}
Once the integral equation \eqref{A-8} has a solution, say $\tilde{\sigma}$, then, the state $f^{\infty}(\theta, \nu, z) = \delta(\theta-\theta^*(\nu,z))$ with $\sin\theta^*(\nu,z) = \frac{\nu}{{\tilde \sigma}(z)}$ is a measure-valued stationary death state to \eqref{A-7}. For details, we refer to Proposition \ref{P4.1}.

The second set of analytical results deal with the propagation of regularity in random space for \eqref{A-6} and \eqref{A-7}.  First, we show that $H^1_z$-norm of $\Theta$ is uniformly bounded (see Theorem \ref{T3.2}):
\[
\sup_{0 \leq t < \infty} \Big( \int_{[1,\infty)} |\Theta(t,z)|^2 \rho(z)dz + \int_{[1,\infty)} |\partial_z\Theta(t, z)|^2 \rho(z)dz \Big) < \infty,
\]
where $|\cdot |$ is the standard $\ell^2$-norm in $\bbr^N$. On the other hand, for the kinetic Winfree model \eqref{A-7}, we assume that coupling function $I = I(\theta,z)$ and initial data $f^{\mathrm{in}}$ satisfy
\[
 k\geq 1, \quad  p>1, \quad T > 0, \quad \max_{0\leq l\leq k} \|\partial^l_zI\|_{L^\infty_{\theta,z}} <\infty, \quad  \partial_\theta^j f^{\mathrm{\mathrm{in}}}\in L^\infty_{\theta, \nu, z}, \quad 0\leq j\leq k,
\]
Then, for a global solution $f$ to \eqref{A-7}, there exists $\Lambda_{1,k} = \Lambda_{1,k}(T, C, \kappa, f^{\mathrm{\mathrm{in}}})$ such that
\[ \|f\|_{W^{k,p}_{\theta, \nu, z}} \leq {\mathcal O}(1)  \Big(  \|f^{\mathrm{\mathrm{in}}}\|_{L^p_{\theta, \nu, z}} + 1 \Big) e^{\Lambda_{1,k}t}, \quad t \geq 0. \]
We refer to Theorem \ref{T5.1} for details. \newline

\indent In numeric section, we first provide how accurately the kinetic description \eqref{A-7} approximates the particle random Winfree model \eqref{A-6}. For simulations, we use generalized polynomial chaos expansion and stochastic Galerkin particle methods in \cite{PZ_SP,XK,Zan}. Hence, we also present the accuracy of the stochastic Galerkin method in producing results based on the degree of the expansion under two regimes for random variable $z$: (i) uniform distribution, (ii) Gaussian distribution (See Figure \ref{Fig_rv}).\newline
\begin{figure}
	\centering
	\mbox{\includegraphics[width=0.5\textwidth]{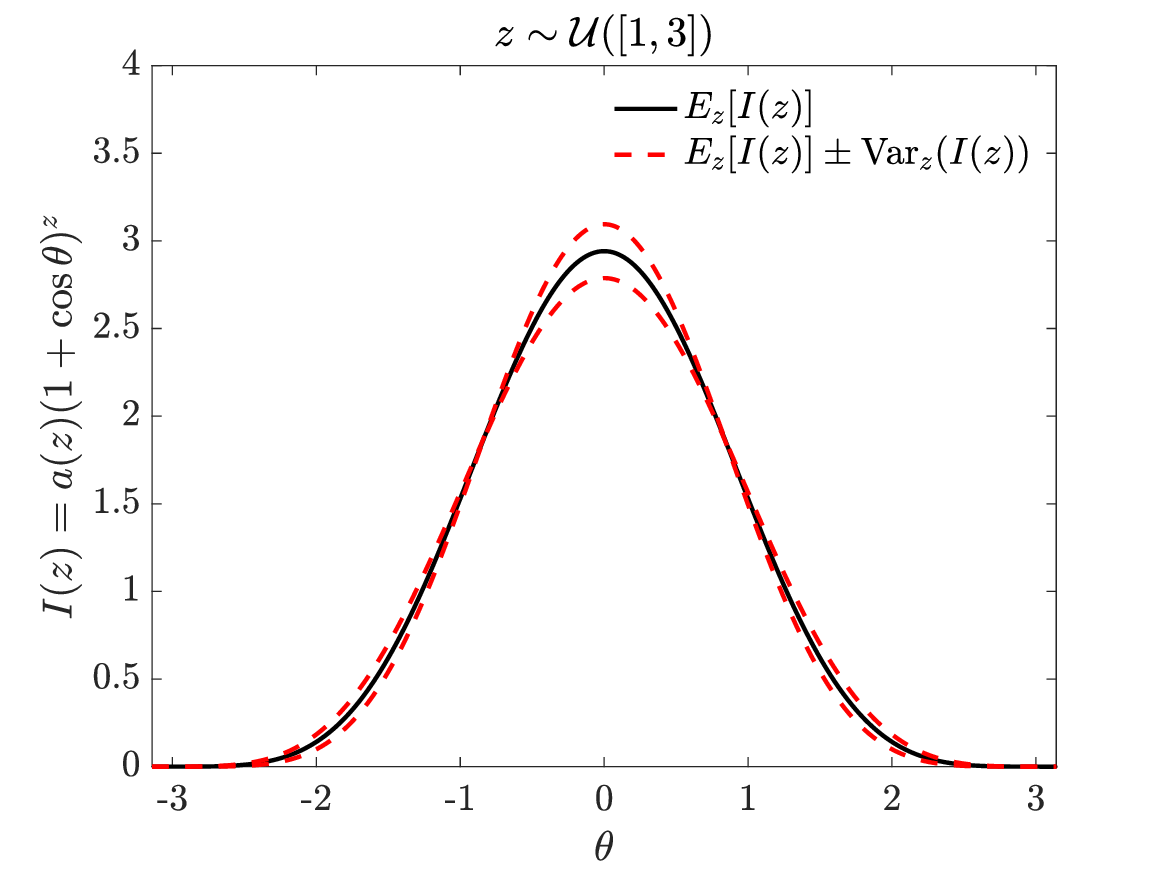}
		\hspace{-.0cm}
		\includegraphics[width=0.5\textwidth]{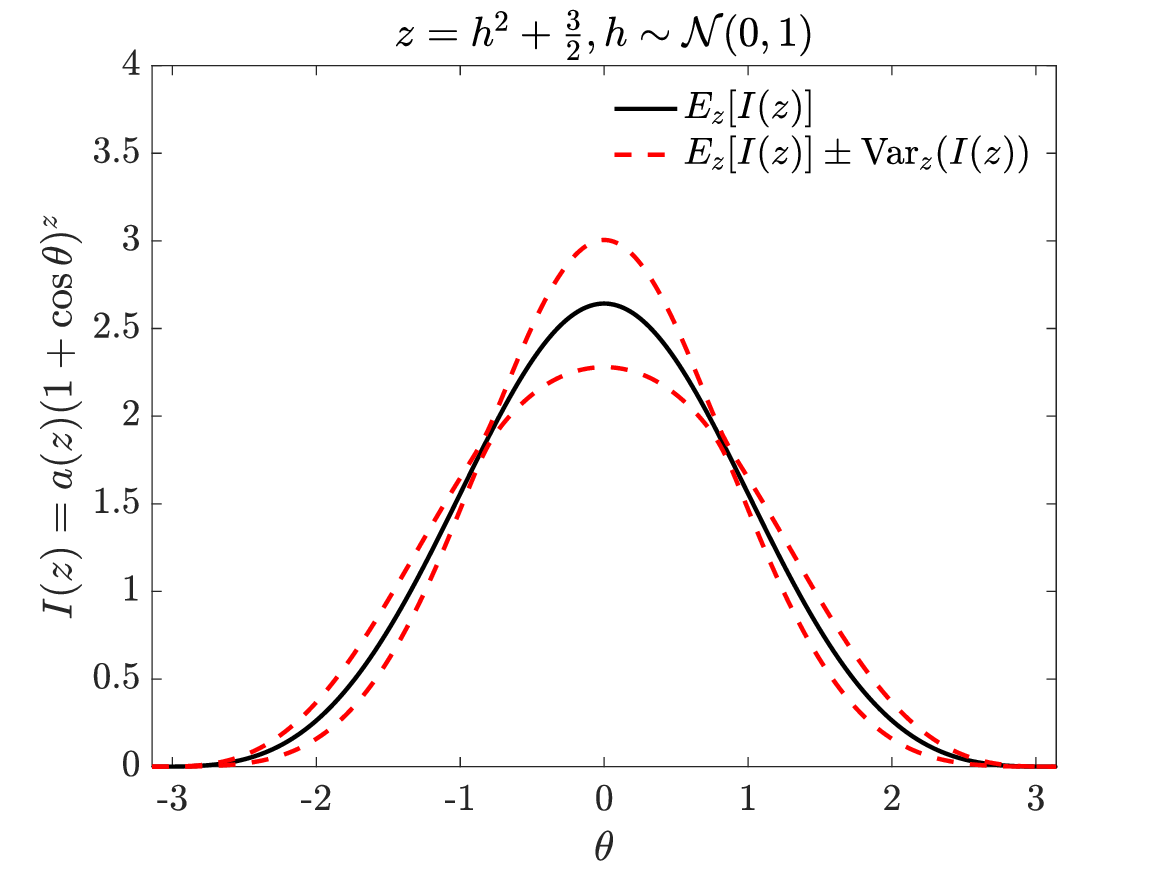}
	}
	\caption{Concentration of $I(z)$ for uniform/Gaussian random variables}
	\label{Fig_rv}
\end{figure}

\indent The rest of this paper is organized as follows. In Section \ref{sec:2}, we first discuss how the Winfree model \eqref{A-6} with high-order couplings can cast as the generalized  Adler equation, and review the previous results on the emergent behaviors in \cite{KHY} and we present the existence of a bounded trapping set for the Winfree model. In Section \ref{sec:3}, we provide sufficient frameworks for the emergence of death in the random Winfree model  \eqref{A-6} and perform a local sensitivity analysis of \eqref{A-6} with respect to uncertainty $z$. In Section \ref{sec:4}, we study the existence of the measure-valued stationary death state to the random kinetic Winfree model \eqref{A-7}. In Section \ref{sec:5}, we study the local sensitivity of the kinetic model \eqref{A-7} with respect to uncertainty $z$. In Section \ref{sec:6}, we provide several numerical simulations for \eqref{A-7} and compare them with analytical results. Finally, Section \ref{sec:7} is devoted to a brief summary of our main results and discussion of some remaining issues for a future work. \newline

\noindent{\bf Notation}:~Throughout the paper, we use the following handy notation:
\begin{gather*}
\Theta = (\theta_1, \cdots, \theta_N), \quad \Theta^{\mathrm{in}} = (\theta^{\mathrm{in}}_1, \cdots, \theta^{\mathrm{in}}_N), \quad \mathcal V = (\nu_1, \cdots, \nu_N), \\
{\mathcal D}(\Theta) := \max_{i,j\in[N]} |\theta_i - \theta_j|, \quad   {\mathcal D}({\mathcal V}) := \max_{i,j\in[N]} |\nu_i - \nu_j|, \quad  \|\mathcal V\|_\infty := \max_{i\in[N]}|\nu_i|.
\end{gather*}
\section{Preliminaries} \label{sec:2}
\setcounter{equation}{0}
In this section, we show how the Winfree model can cast as a generalized Adler's equation and review the previous results on the deterministic Winfree model with high-order couplings, and then we study several basic a priori estimates on the random Winfree model.
\subsection{The generalized Adler equation} \label{sec:2.1}
In this subsection, we study how the Winfree model can cast as a generalized Adler's equation for the special pair of sensitivity and influence functions. First, we recall the Adler equation \cite{Ad}:
\begin{align} \label{B-1}
	\dot\theta = \nu-\kappa\sin\theta, \quad \theta \in \bbr,
\end{align}
where $\nu$ and $\kappa$ denote a natural frequency and a nonnegative coupling strength, respectively.  Since \eqref{B-1} is separable first-order ODE, we can find an explicit solution in \cite{CHJK} for \eqref{B-1}, but it will not be useful in our discussion. Next, we show that the Adler equation \eqref{B-1} can arise from the Kuramoto model with $N=2$. More precisely, consider the Kuramoto model for a two-oscillator system:
\begin{align} \label{B-2}
\begin{dcases}
\displaystyle {\dot \theta}_1 = \nu_1 + \frac{\kappa}{2} \sin(\theta_2 - \theta_1), \\
\displaystyle  {\dot \theta}_2 = \nu_2 + \frac{\kappa}{2} \sin(\theta_1 - \theta_2).
\end{dcases}
\end{align}
We set the relative states:
\begin{align*}
\theta := \theta_1 - \theta_2 \quad \mbox{and} \quad  \nu := \nu_1 - \nu_2.
\end{align*}
Then, it is easy to see that the system \eqref{B-2} reduces to \eqref{B-1} for $(\theta, \nu)$. A generalized Adler equation can be obtained by replacing the constant coupling strength $\kappa$ in \eqref{B-1} with the state-dependent coupling strength $\tilde\kappa = \tilde\kappa(\theta)$:
\begin{align} \label{B-4}
	\dot\theta = \nu- \tilde\kappa(\theta) \sin\theta, \quad \theta \in \bbr,
\end{align}
From now on, we call \eqref{B-4} as a generalized Adler's equation. Now, we return to the Winfree model \eqref{A-1} with the special sensitivity function \eqref{A-4}. Then, it can be rewritten as
\begin{align}\label{B-5}
	\dot\theta_i = \nu_i - \left( \frac{\kappa}{N}\sum_{k=1}^N I(\theta_k) \right) \sin \theta_i,\quad \forall~i\in[N].
\end{align}
Note that system \eqref{B-5} corresponds to the generalized Adler equation with the state-dependent coupling strength:
\[ \tilde\kappa(\Theta) := \frac{\kappa}{N}\sum_{k=1}^N I(\theta_k). \]
In this manner, the Winfree model can cast as a generalized Adler's equation.

\subsection{Previous results} \label{sec:2.2}
In what follows, we briefly discuss previous results in \cite{KHY} on the asymptotic patterns of the Winfree model \eqref{A-5} with deterministic high-order couplings. First, we introduce a rotation number $\rho_i$ of the $i$-th oscillator:
\begin{align} \label{B-5-1}
\rho_i := \lim_{t\to\infty} \frac{\theta_i(t)}{t},
\end{align}
if the right-hand side exists. Before we present the previous results, we first define several asymptotic patterns of phase ensemble in terms of  rotation numbers.
\begin{definition} \label{D2.1}
\emph{\cite{AS, KHY}} Let $\Theta = \Theta(t)$ be a phase configuration whose dynamics is governed by  \eqref{A-5}.  Then, asymptotic patterns can be defined as follows. \newline
\begin{enumerate}
		\item The phase ensemble $\Theta(t)$ exhibits (oscillator) death if and only if all the rotation numbers are zero:
		\[
			\rho_i \equiv 0, \quad \forall~i\in[N].
		\]
		\item The phase ensemble $\Theta(t)$ exhibits (phase) locking if and only if all the rotation numbers are equal to a nonzero constant:
		\[
			\rho_i \equiv \rho \ne 0, \quad \forall~i\in[N].
		\]
		\item The phase ensemble $\Theta(t)$ exhibits (complete) incoherence if and only if all the rotation numbers are different:
		\[
			\rho_i \ne \rho_j, \quad \forall~i \neq j\in[N].
		\]
	\end{enumerate}
\end{definition}
\vspace{0.5cm}
In the next theorem, we recall sufficient conditions for the emergence of incoherence, death, and locking for the Winfree model \eqref{A-5}.  For $n \geq 1$, let $\beta_n\in(0, \pi)$ be a minimum point of the coupling function $SI_n$ in $(0, \pi)$, and let $a_n$ be  positive constants defined by the following relation
\[ \beta_n := \cos^{-1}\bigg( \frac{n}{n+1} \bigg) \quad \mbox{and} \quad  a_n = \frac{(2n)!!}{2^n(2n-1)!!}. \]
\begin{theorem} \label{T2.1}
	\cite{KHY}
	Let $\Theta = \Theta(t)$ be a global solution of \eqref{A-5} with initial data $\Theta^{in}$. Then, the following assertions on asymptotic patterns hold.
	\begin{enumerate}
		\item If system parameters satisfy
		\begin{align*}
		\min_{i\ne j} |\nu_i-\nu_j|  > 0, \quad 0 \leq \kappa < \frac{\min_{i\ne j} |\nu_i-\nu_j|}{2^{n+1}a_n} \sim \mathcal O \bigg( \frac{1}{\sqrt{n}} \bigg),
		\end{align*}
		then $\Theta$ exhibits incoherence. \vspace{.2cm}
		\item
		If system parameters and initial data satisfy
		\[ \hspace{.4cm} \beta_n < \alpha_n < \pi, \quad  \theta_i^{in} \in (-\alpha_n, \alpha_n) \quad \mbox{for all}~i\in[N], \quad  \kappa > \frac{\|\mathcal V\|_\infty}{a_n\sin\alpha_n(1+\cos\alpha_n)^n} \sim \mathcal O(1),
		\]
		then $\Theta$ exhibits death. \vspace{.2cm}
		\item If system parameters and initial data satisfy
		\begin{align*}
		\begin{aligned}
			& \nu_i = \nu \quad \mbox{for all}~i\in[N],  \quad 0 < \alpha_n < \bigg( \frac{\pi}{2^{n+1}a_n} \frac{n}{n+1} -\frac{1}{2^n} \bigg) \frac{1}{\sqrt{2n-1}} \bigg( \frac{2n}{2n-1} \bigg)^{n-1} \sim \mathcal O \bigg( \frac1n \bigg), \\
			& 0 < \kappa < \frac{\nu}{2^{n+1}a_n}, \quad  {\mathcal D}(\Theta^{in}) <  \alpha_n \exp \bigg[ - \frac{2^n a_n \kappa}{\nu - 2^n a_n \kappa} \left( \alpha_n \sqrt{2n-1} \bigg( \frac{2n-1}{2n} \bigg)^{n-1} + \frac{1}{2^{n-1}} \right) \bigg],
		\end{aligned}
		\end{align*}
		then $\Theta$ exhibits locking.
	\end{enumerate}
\end{theorem}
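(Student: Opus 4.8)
The plan is to treat the three regimes of Theorem \ref{T2.1} separately, in each case exploiting the generalized‑Adler form \eqref{B-5} of \eqref{A-5}, namely $\dot\theta_i=\nu_i-\tilde\kappa(\Theta)\sin\theta_i$ with $\tilde\kappa(\Theta)=\frac{\kappa}{N}\sum_{k=1}^N I_n(\theta_k)$, together with the elementary two–sided bound $0\le\tilde\kappa(\Theta)\le 2^na_n\kappa$ coming from $0\le I_n(\theta)=a_n(1+\cos\theta)^n\le 2^na_n$.

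\textbf{Incoherence.} For $i\ne j$ I would subtract the two equations to obtain $\frac{d}{dt}(\theta_i-\theta_j)=(\nu_i-\nu_j)-\tilde\kappa(\Theta)(\sin\theta_i-\sin\theta_j)$; since the last term is bounded in modulus by $2\cdot 2^na_n\kappa=2^{n+1}a_n\kappa<|\nu_i-\nu_j|$, the relative phase is strictly monotone, so $|\theta_i(t)-\theta_j(t)|$ grows at least linearly, which forces $\rho_i\ne\rho_j$ once the $\rho_i$ are known to exist (see the last paragraph). This gives complete incoherence.

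\textbf{Death.} I would show that the box $R=[-\alpha_n,\alpha_n]^N$ is positively invariant. On the face $\{\theta_i=\alpha_n\}\cap R$ every $\theta_k$ satisfies $|\theta_k|\le\alpha_n<\pi$, so $\cos\theta_k\ge\cos\alpha_n$ and hence $\tilde\kappa(\Theta)\ge\kappa a_n(1+\cos\alpha_n)^n$; therefore $\dot\theta_i=\nu_i-\tilde\kappa(\Theta)\sin\alpha_n\le\|\mathcal V\|_\infty-\kappa a_n\sin\alpha_n(1+\cos\alpha_n)^n<0$ by the hypothesis on $\kappa$, and symmetrically $\dot\theta_i>0$ on the opposite face. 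Since $\Theta^{\mathrm{in}}\in\mathrm{int}\,R$, the phase vector stays in $R$ for all time, so each $\rho_i=\lim_{t\to\infty}\theta_i(t)/t=0$. This is exactly the trapping–set mechanism of Proposition \ref{P2.1}; the condition $\beta_n<\alpha_n$ merely pins $\alpha_n$ to the decreasing branch of $\alpha\mapsto\sin\alpha(1+\cos\alpha)^n$, which is what makes the stated threshold on $\kappa$ the operative one.

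\textbf{Locking.} Here $\nu_i\equiv\nu$, and $\kappa<\nu/(2^{n+1}a_n)$ gives $2^na_n\kappa<\nu/2$, so $\dot\theta_i\in(\nu/2,3\nu/2)$: all phases rotate with uniformly positive, comparable speed, hence any rotation numbers that exist are nonzero and, once the diameter is controlled, necessarily equal. For the diameter I would subtract the $i$– and $j$–equations and apply the mean value theorem, $\frac{d}{dt}(\theta_i-\theta_j)=-\tilde\kappa(\Theta)\cos\xi_{ij}(t)\,(\theta_i-\theta_j)$, giving $|\theta_i(t)-\theta_j(t)|=|\theta_i^{\mathrm{in}}-\theta_j^{\mathrm{in}}|\exp\bigl(-\int_0^t\tilde\kappa(\Theta(s))\cos\xi_{ij}(s)\,ds\bigr)$. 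The integrand is negative only while the (tight) cluster sits in the arc where $\cos<0$; changing time to the cluster phase via $\dot\theta\in(\nu/2,3\nu/2)$ and bounding $\tilde\kappa(\Theta)\le 2^na_n\kappa$, the total expanding contribution accumulated over the transits of that arc is, up to the factor $\frac{2^na_n\kappa}{\nu-2^na_n\kappa}$, controlled by an integral of $|\cos\theta|(1+\cos\theta)^n$ whose explicit evaluation (writing $1+\cos\theta=2\cos^2(\theta/2)$ and using a Laplace‑type estimate for the $n$‑th power) produces exactly $\alpha_n\sqrt{2n-1}\bigl(\tfrac{2n-1}{2n}\bigr)^{n-1}+2^{-(n-1)}$, i.e. the exponent $\Lambda_n$ in the hypothesis. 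Thus as long as $\mathcal D(\Theta(t))<\alpha_n$ one has $\mathcal D(\Theta(t))\le\mathcal D(\Theta^{\mathrm{in}})e^{\Lambda_n}<\alpha_n$ by the smallness assumption on $\mathcal D(\Theta^{\mathrm{in}})$; a continuation/bootstrap argument then upgrades this to $\mathcal D(\Theta(t))<\alpha_n$ for all $t$. With the cluster permanently confined to a moving arc of width $<\alpha_n$, the net per‑period sign of $\int\tilde\kappa\cos\xi_{ij}$ is governed by $\int_0^{2\pi}\cos\theta(1+\cos\theta)^n\,d\theta>0$ (only even powers of $\cos$ survive) up to an $\mathcal O(\alpha_n)$ correction that the $\mathcal O(1/n)$ bound on $\alpha_n$ keeps subdominant, so $\theta_i-\theta_j$ is Cauchy, all $\rho_i$ exist and coincide, and they are nonzero. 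This is phase locking.

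\textbf{Main obstacle.} The incoherence and death parts are soft — they use only sign control of the (relative) vector field and the bound $I_n\le 2^na_n$. The real work is the locking regime: one must split the rotation period into the arc where the coupling contracts the cluster and the arc where it expands it, show contraction wins over a full period — which is where the positivity of $\int_0^{2\pi}\cos\theta(1+\cos\theta)^n\,d\theta$ and sharp $n$‑asymptotics enter, and which is the source of both the awkward $\mathcal O(1/n)$ bound on $\alpha_n$ and the exponential prefactor on $\mathcal D(\Theta^{\mathrm{in}})$ — and then close the bootstrap so these tight‑cluster estimates remain self‑consistently valid for all time. A secondary technical point, shared by the incoherence and locking cases, is verifying that the $\rho_i$ exist as genuine limits rather than merely $\limsup$'s, which requires a Poincaré‑map/averaging argument using that $|\dot\theta_i-\nu_i|\le 2^na_n\kappa$ makes each $\theta_i$‑dynamics a non‑degenerate perturbation of rigid rotation in the relevant parameter window.
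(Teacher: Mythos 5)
First, note that the paper itself does not prove Theorem \ref{T2.1}: it is recalled from \cite{KHY} without proof, so the only internal point of comparison is Proposition \ref{P2.1} (with Corollary \ref{C2.1}), which establishes the random analogue of the death assertion. Your death argument --- positive invariance of the box $[-\alpha_n,\alpha_n]^N$ via sign control of $\dot\theta_i$ on the boundary faces, using $\tilde\kappa(\Theta)\ge \kappa a_n(1+\cos\alpha_n)^n$ inside the box --- is exactly the continuous-induction/first-exit-time argument the paper runs for Proposition \ref{P2.1}, and it is correct. The incoherence argument is also sound as far as it goes: strict monotonicity of $\theta_i-\theta_j$ with rate at least $\min_{i\ne j}|\nu_i-\nu_j|-2^{n+1}a_n\kappa>0$ does force $\rho_i\ne\rho_j$, \emph{provided the rotation numbers exist}, a point you flag but do not establish.

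The genuine gap is in the locking part. Everything that makes assertion (3) nontrivial --- the specific $\mathcal O(1/n)$ ceiling on $\alpha_n$ and the exponential smallness threshold on $\mathcal D(\Theta^{\mathrm{in}})$ with the exact exponent $\alpha_n\sqrt{2n-1}\bigl(\tfrac{2n-1}{2n}\bigr)^{n-1}+2^{-(n-1)}$ --- is asserted to come out of a Laplace-type evaluation of an integral of $|\cos\theta|(1+\cos\theta)^n$ over the expanding arc, but no such computation is carried out, and the reduction of the $N$-body diameter dynamics to that single integral (the change of time variable using $\dot\theta\in(\nu/2,3\nu/2)$, the uniform control of the mean-value points $\xi_{ij}$ relative to the cluster position, and the bookkeeping across successive transits of the arc where $\cos\xi_{ij}<0$) is precisely the hard content of the proof in \cite{KHY}. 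Moreover, concluding that all $\rho_i$ exist, coincide and are nonzero requires more than per-period positivity of $\int_0^{2\pi}\cos\theta(1+\cos\theta)^n\,d\theta$: you must close the bootstrap so that $\mathcal D(\Theta(t))<\alpha_n$ holds for all $t$ (otherwise the mean-value estimates lose validity), and you still need a Poincar\'e-map or averaging argument for the existence of the common rotation number. As written, the locking part is a plausible road map rather than a proof.
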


\subsection{The Winfree model with uncertain couplings} \label{sec:2.3}
In this subsection, we study the existence of a positively invariant subset to the Cauchy problem \eqref{A-6}. Note that once we find a bounded positive invariant subset, then all rotation numbers will be zero. Hence, the existence of a bounded positively invariant subset implies the emergence of oscillator death. Since the system \eqref{A-6} is random, we reformulate the previous results with respect to $[1, \infty)$-valued random variable $z$ using the jargon of probability theory. First, we define a square box ${\mathcal B}_c(0)$ in $\mathbb{R}^N$ around the origin:
\begin{align*}
{\mathcal B}_c(0):=\bigg\{\Theta=(\theta_1,\cdots,\theta_N)\in\mathbb{R}^N:~|\theta_i|<c,\quad\forall~i\in[N]\bigg\}.
\end{align*}
Let $z: \Omega \to [1, \infty)$ be a random variable on the sample space $\Omega$, and define the \textit{maximum point function} $\beta:[1,\infty)\to(0,\pi/2)$ by
\[\beta(z) := \underset{0<\theta<\pi/2}{\arg\max} ~ \Big\{  \sin\theta(1+\cos\theta)^z \Big \} =\arccos\left(\frac{z}{z+1}\right).\]
Let $c \in(0,\pi)$ be a positive constant which is called \textit{initial bound} of configuration, that is,
\[{\color{black}\Theta^{\mathrm{in}}(z) \in {\mathcal B}_c(0),\quad\mbox{almost surely}.} \]
Now, we define the \textit{adjoint bound function} $c^*(z)$ such that
\begin{align} \label{B-6-1}
0<c^*(z)\le\beta(z)<\frac{\pi}{2} \quad \mbox{and} \quad \sin c^*(z)(1+\cos c^*(z))^z=\sin c(1+\cos c)^z.
\end{align}
See Figure \ref{adjoint} for a graphical description of $c^*(z)$.
\begin{figure}[h]
\centering
\includegraphics[width=0.7\textwidth]{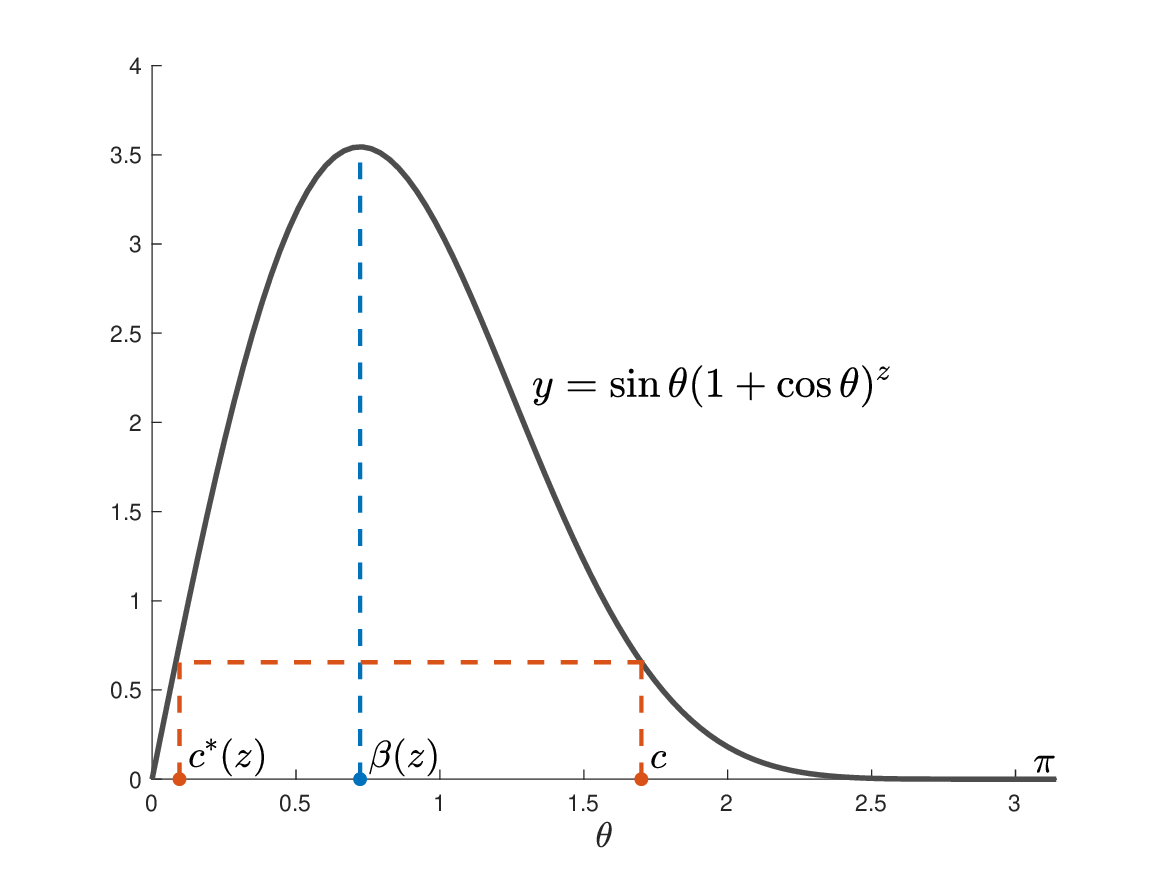}
\caption{Schematic diagram of $\beta(z)$ and $c^*(z)$ with $z=3$.}
\label{adjoint}
\end{figure}

Below, we state an assumption to be crucially used throughout this paper.
\begin{tcolorbox}
\begin{definition} \label{D2.2}
\emph{($\kappa$-assumption)}
	Let $c\in(0,\pi), \kappa$ and $z$ be the deterministic initial bound, coupling strength and $[1, \infty)$-valued random variable, respectively. Then we call the following relation as ``$\kappa$-{\it assumption}":
	\begin{align}\label{kA_ineq}
		\kappa>\frac{\|\mathcal{V}\|_\infty}{a(z)\sin c(1+\cos c)^z}, \qquad \mbox{almost surely}{\color{black},}
	\end{align}
	{\color{black}where $a(z)$ is defined in \eqref{def_az} and $\|\mathcal V\|_\infty:=\max_{i\in[N]}|\nu_i|$.}
\end{definition}
\end{tcolorbox}
\begin{remark}
	Since the denominator in the lower bound of $\kappa$ in $\kappa$-assumption goes to zero as $z$ becomes larger, i.e.,
	\[\lim_{z\to\infty}a(z)(1+\cos c)^z=0,\quad\forall~c\in(0,\pi),\]
	the $\kappa$-assumption implies $z\in L^\infty(\Omega)$.
\end{remark}
Next, we consider the uniform boundedness of $\Theta$ which is the random counterpart of the result in \cite{KHY}.
\begin{proposition}\label{P2.1}
\emph{(Existence of a trapping set)}
	Suppose that $\kappa$-assumption in Definition \ref{D2.2} holds and let $\Theta(t,z)$ be a global solution process to \eqref{A-6} {\color{black}with initial data $\Theta^{in}(z)\in\mathcal B_c(0)$, almost surely}.  Then, the set ${\mathcal B}_c(0)$ is positively invariant:
	\begin{align} \label{B-7}
	 \Theta(t,z)\in  {\mathcal B}_c(0),\quad\forall~t > 0,\quad\mbox{almost surely.}
	\end{align}
\end{proposition}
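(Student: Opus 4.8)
The plan is to show that the boundary $\partial\mathcal B_c(0)$ acts as a barrier for the flow of \eqref{A-6}, so that a trajectory starting in $\mathcal B_c(0)$ can never cross it. Fix $\omega\in\Omega$ in the almost-sure event where the $\kappa$-assumption \eqref{kA_ineq} holds and where $\Theta^{\mathrm{in}}(z)\in\mathcal B_c(0)$; from now on $z=z(\omega)$ is a fixed number in $[1,\infty)$ and we drop it from the notation. Suppose for contradiction that the conclusion fails, i.e.\ there exists a first exit time
\[
t_*:=\inf\{t>0:\ \Theta(t)\notin\mathcal B_c(0)\}<\infty.
\]
By continuity of $t\mapsto\Theta(t)$ and compactness of $\overline{\mathcal B_c(0)}$, at $t=t_*$ we have $|\theta_{i_0}(t_*)|=c$ for some index $i_0\in[N]$, while $|\theta_i(t_*)|\le c$ for all $i$. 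It then suffices to show $\frac{d}{dt}\big|_{t=t_*}|\theta_{i_0}(t)|<0$ (strictly inward), which contradicts the minimality of $t_*$.

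The main step is the sign computation for $\dot\theta_{i_0}$ at the exit point. Consider the case $\theta_{i_0}(t_*)=c$ (the case $\theta_{i_0}(t_*)=-c$ is symmetric, using that $\nu\mapsto-\nu$, $\theta\mapsto-\theta$ leaves the influence term even in $\cos\theta$). From \eqref{A-6},
\[
\dot\theta_{i_0}(t_*)=\nu_{i_0}-\frac{a(z)\kappa}{N}\sin c\sum_{j=1}^N\big(1+\cos\theta_j(t_*)\big)^z.
\]
Since every $\theta_j(t_*)\in[-c,c]$ and $\cos$ is decreasing on $[0,c]\subset[0,\pi)$, we have $1+\cos\theta_j(t_*)\ge 1+\cos c>0$ for each $j$, hence $\frac1N\sum_j(1+\cos\theta_j(t_*))^z\ge(1+\cos c)^z$. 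Also $\sin c>0$ because $c\in(0,\pi)$. Therefore
\[
\dot\theta_{i_0}(t_*)\le\nu_{i_0}-a(z)\kappa\,\sin c\,(1+\cos c)^z\le\|\mathcal V\|_\infty-a(z)\kappa\,\sin c\,(1+\cos c)^z<0,
\]
where the last strict inequality is exactly the $\kappa$-assumption \eqref{kA_ineq}. Thus the $i_0$-th phase is strictly decreasing at $t_*$, so for $t$ slightly less than $t_*$ we would have $\theta_{i_0}(t)>c$, contradicting $\Theta(t)\in\mathcal B_c(0)$ for $t<t_*$. This rules out $t_*<\infty$, so $\Theta(t)\in\mathcal B_c(0)$ for all $t>0$, which is \eqref{B-7}.

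I expect the only genuine subtlety to be the handedness of the barrier argument at the exit time: one must argue that at a first exit time the relevant derivative cannot be nonnegative, which is why the strict inequality in \eqref{kA_ineq} (rather than $\ge$) is needed, and why it must hold uniformly over the admissible $\theta_j$'s — this is precisely what the bound $\frac1N\sum_j(1+\cos\theta_j)^z\ge(1+\cos c)^z$ for $\Theta\in\overline{\mathcal B_c(0)}$ supplies. A small amount of care is also needed to pass from the a.s.\ validity of \eqref{kA_ineq} and of $\Theta^{\mathrm{in}}(z)\in\mathcal B_c(0)$ to the a.s.\ conclusion \eqref{B-7}: one works $\omega$-by-$\omega$ on the intersection of the two full-measure events, which is again of full measure, and the deterministic argument above applies verbatim on that event. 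No compactness or ODE-theoretic input beyond continuity of the (assumed global) solution process is required.
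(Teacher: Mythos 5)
Your proof is correct and follows essentially the same route as the paper: a first-exit-time (continuous induction) argument in which the derivative of the maximal phase is shown to point strictly inward at the boundary $|\theta_{i_0}|=c$, using $\sin c>0$, the lower bound $(1+\cos\theta_j)^z\ge(1+\cos c)^z$ on $\overline{\mathcal B_c(0)}$, and the strict inequality in the $\kappa$-assumption. The only cosmetic difference is that the paper works with $\partial_t|\theta_{i^*}|$ via multiplication by $\sgn(\theta_{i^*})$, whereas you treat the two boundary cases $\pm c$ by symmetry.
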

\begin{proof} First, we claim that
\begin{align} \label{B-9}
|\theta_i(t,z(\omega))| < c, \quad \forall~ t > 0,~~ i \in [N],
\end{align}
for a sample $\omega \in \Omega$ satisfying \eqref{kA_ineq}. \newline

\noindent {\it Proof of \eqref{B-9}}: ~We use the continuous induction argument. For a fixed $\omega$ satisfying \eqref{kA_ineq}, we set
	\[
	\mathcal{T}(z(\omega)):=\{t>0:\Theta(s, z(\omega))\in {\mathcal B}_c(0) \mbox{ for all }s\in[0,t)\}, \quad \tau^{\infty}(z(\omega)) :=\sup\mathcal{T}(z(\omega)).
	\]
Note that ${\mathcal T}$ and $\tau^{\infty}$ are random set and random time, and as long as it is clear in context, we suppress $\omega$ dependence in ${\mathcal T}$ and $\tau^{\infty}$ in what follows:
\[
\mathcal{T} := \mathcal{T}(z(\omega)) \quad \mbox{and} \quad  \tau^{\infty} := \tau^{\infty}(z(\omega)).
\]
Since $\Theta(\cdot, z)$ is continuous in time and $\Theta^{in} \in {\mathcal B}_c(0)$, we have
\[ \mathcal{T} \neq \emptyset  \quad \mbox{and} \quad \tau^{\infty} > 0. \]
  We claim that
	\begin{align} \label{B-10}
	 \tau^{\infty} = \infty.
	 \end{align}
	Suppose that the contrary holds, i.e.,
	\begin{align*}
	 \tau^{\infty} <\infty.
	  \end{align*}
	 Then, there exists a random index $i^* \in[N]$ such that
	 \begin{align*}
	  |\theta_{i^*}(\tau^{\infty}, z)|=c,
	   \end{align*}
	  and
	  \begin{align} \label{B-13}
	   \partial_t \theta_{i^*}(t,z)=\nu_{i^*}-\frac{a(z)\kappa}{N}\sin\theta_{i^*}(t,z)\sum_{j=1}^N\big(1+\cos\theta_j(t,z)\big)^z.
	  \end{align}
	   We multiply \eqref{B-13} by $\sgn(\theta_{i^*}(t,z))$ and use \eqref{kA_ineq}  to find
	\begin{align*}
		\partial_t|\theta_{i^*}(t, z)|\Big|_{t = \tau^{\infty}} &=~\nu_{i^*}\sgn (\theta_{i^*}(\tau^{\infty}, z)) -\frac{a(z)\kappa}{N}\sin c \sum_{j=1}^N\Big(1+\cos\theta_j(\tau^{\infty}, z)\Big)^{z}\\
		&\le \|\mathcal V\|_\infty-a(z)\kappa\sin c(1+\cos c)^{z}<0.
	\end{align*}
	This yields a contradiction to the definition of $\tau^{\infty}$. Therefore, we have \eqref{B-10} which implies \eqref{B-9}. Hence, we have the desired assertion \eqref{B-7}.
\end{proof}
As a direct corollary of Proposition \ref{P2.1}, we can show that a global solution will be confined in the smaller region ${\mathcal B}_{c^*}(0)$ with $c^*(z) \leq c$ after a certain (uniform) time, almost surely.
\begin{corollary}\label{C2.1}
	Suppose that $\kappa$-assumption holds and let $\Theta(t,z)$ be a global solution process to \eqref{A-6} {\color{black}with initial data $\Theta^{in}(z)\in\mathcal B_c(0)$, almost surely}. Then, there exists a constant $\tau_e  <\infty$, called an {\it entrance time}, such that
	\begin{align} \label{B-14}
	\Theta(t,z)\in {\mathcal B}_{c^*(z)}(0), \quad \forall~t \ge \tau_e, \quad\mbox{almost surely}.
	\end{align}
\end{corollary}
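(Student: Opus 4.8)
The plan is to exploit Proposition \ref{P2.1}: after fixing a sample $\omega$ satisfying \eqref{kA_ineq}, we already know $\Theta(t,z)\in\mathcal B_c(0)$ for all $t>0$, so it remains to show that each coordinate $\theta_i(t,z)$ enters the smaller interval $(-c^*(z),c^*(z))$ within a time that can be bounded uniformly in $i$ and in $\omega$, and then stays there. First I would argue the \emph{trapping} of the smaller box: I claim $\mathcal B_{c^*(z)}(0)$ is itself positively invariant. Indeed, if $|\theta_{i^*}(t,z)|=c^*(z)$ at some time, then multiplying the $i^*$-th equation of \eqref{A-6} by $\sgn(\theta_{i^*})$ and using the defining relation $\sin c^*(z)(1+\cos c^*(z))^z=\sin c(1+\cos c)^z$ from \eqref{B-6-1} together with \eqref{kA_ineq} gives, exactly as in the proof of Proposition \ref{P2.1},
\[
\partial_t|\theta_{i^*}(t,z)|\le \|\mathcal V\|_\infty - a(z)\kappa\sin c^*(z)(1+\cos c^*(z))^z = \|\mathcal V\|_\infty - a(z)\kappa\sin c(1+\cos c)^z < 0,
\]
so no coordinate can cross $c^*(z)$ from inside; this also shows that once all coordinates are inside $\mathcal B_{c^*(z)}(0)$ they remain there.

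Next I would establish the \emph{entrance} in finite time with a quantitative rate. Fix $i$ and suppose $|\theta_i(t,z)|\ge c^*(z)$ on some time interval; since also $|\theta_i(t,z)|\le c$ and $\theta_j(t,z)\in(-c,c)$ for all $j$ by Proposition \ref{P2.1}, the function $\theta\mapsto\sin\theta\,(1+\cos\theta)^z$ is monotone on the relevant range (here one uses $c^*(z)\le\beta(z)$, the maximizer, so that on $[c^*(z),c]$ the product $\sin\theta(1+\cos\theta)^z$ stays above its value at $c^*(z)=$ its value at $c$) and the same sign computation yields a uniform strict-decrease bound
\[
\partial_t|\theta_i(t,z)| \le -\big(a(z)\kappa\sin c^*(z)(1+\cos c^*(z))^z - \|\mathcal V\|_\infty\big) =: -\delta(z) < 0
\]
whenever $c^*(z)\le|\theta_i(t,z)|\le c$. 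Hence $|\theta_i(t,z)|$ decreases at rate at least $\delta(z)$ until it reaches $c^*(z)$, which happens no later than $t = (c-c^*(z))/\delta(z)$. Taking the supremum over $i$ and then a uniform-in-$\omega$ bound on this quantity — this is where the hypothesis $z\in L^\infty(\Omega)$ (a consequence of the $\kappa$-assumption, by the Remark after Definition \ref{D2.2}) is essential, since it keeps $a(z)$, $c^*(z)$ and $\delta(z)$ bounded away from their degenerate limits almost surely — produces a deterministic constant
\[
\tau_e := \operatorname*{ess\,sup}_{\omega\in\Omega}\ \frac{c - c^*(z(\omega))}{a(z(\omega))\kappa\sin c^*(z(\omega))(1+\cos c^*(z(\omega)))^z - \|\mathcal V\|_\infty} < \infty,
\]
and combining the entrance bound with the invariance of $\mathcal B_{c^*(z)}(0)$ gives \eqref{B-14}.

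The main obstacle I anticipate is the uniformity of $\tau_e$ in $\omega$: the denominator $\delta(z)$ tends to $0$ both as $z\to\infty$ (since $a(z)(1+\cos c)^z\to 0$) and, more subtly, it must be checked that $\delta(z)$ does not degenerate for $z$ near $1$ or wherever $c^*(z)$ could approach $0$; one has to confirm that the $\kappa$-assumption, holding \emph{almost surely} with a fixed deterministic $\kappa$ and $c$, forces $z$ into a set on which $\delta(z)$ is bounded below by a positive constant — i.e.\ that $\|\mathcal V\|_\infty < a(z)\kappa\sin c(1+\cos c)^z$ a.s.\ already yields $\operatorname{ess\,inf}_\omega \delta(z(\omega)) > 0$. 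Once this is in hand the rest is the routine continuous-induction / Gr\"onwall-type argument mirroring Proposition \ref{P2.1}.
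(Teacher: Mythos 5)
Your positive-invariance claim for $\mathcal B_{c^*(z)}(0)$ is sound (once every coordinate lies in the smaller box, each influence term is at least $(1+\cos c^*(z))^z$, so \eqref{B-6-1} and \eqref{kA_ineq} give the strict inward drift), and your uniform bound on the entrance time matches the paper's. The gap is in the entrance step. You fix an \emph{arbitrary} index $i$ and assert $\partial_t|\theta_i(t,z)|\le-\delta(z)$ whenever $c^*(z)\le|\theta_i(t,z)|\le c$, invoking the fact that $h(\theta):=\sin\theta(1+\cos\theta)^z$ satisfies $h(|\theta_i|)\ge h(c^*(z))=h(c)$ on $[c^*(z),c]$. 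But the drift of $|\theta_i|$ is governed by $\sin|\theta_i|\cdot\frac{1}{N}\sum_{j}(1+\cos\theta_j)^z$, not by $h(|\theta_i|)$: Proposition \ref{P2.1} only gives $|\theta_j|<c$, so the available per-term lower bound on the influence sum is $(1+\cos c)^z$, and with $\sin|\theta_i|\ge\sin c^*(z)$ the best you can conclude is
\[
\partial_t|\theta_i(t,z)|\le\|\mathcal V\|_\infty-a(z)\kappa\,\sin c^*(z)\,(1+\cos c)^z.
\]
Since $\sin c^*(z)(1+\cos c^*(z))^z=\sin c(1+\cos c)^z$ together with $c^*(z)\le c$ forces $\sin c^*(z)\le\sin c$ (and the ratio can be tiny for large $z$), this right-hand side need not be negative under the $\kappa$-assumption. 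Concretely, a coordinate sitting just above $c^*(z)$ while the remaining oscillators sit near $\pm c$, where their influence $(1+\cos c)^z$ is weakest, can have positive drift; the coordinatewise monotone decrease you rely on is false.

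The paper repairs exactly this point by running the decrease estimate on the \emph{maximal} coordinate $M(t,z)$: then $|\theta_j|\le|\theta_M|$ for all $j$, hence $\frac{1}{N}\sum_j(1+\cos\theta_j)^z\ge(1+\cos\theta_M)^z$, and $\sin|\theta_M|(1+\cos\theta_M)^z=h(|\theta_M|)\ge h(c)$ on $[c^*(z),c]$ yields the uniform decrease of $\max_i|\theta_i|$, whence the finite entrance time and \eqref{B-14}. Your argument goes through essentially verbatim once ``fix $i$'' is replaced by ``take $i=M(t,z)$, the index of maximal modulus.'' (Your closing concern about the essential infimum of $\delta(z)$ being positive is legitimate and is present in the paper's proof as well, where it is dispatched by $z\in L^\infty(\Omega)$; also, the exponent in your formula for $\tau_e$ should read $z(\omega)$ rather than $z$.)
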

\begin{proof}  We choose a sample point $\omega \in \Omega$ satisfying the condition \eqref{kA_ineq}.  By Proposition \ref{P2.1}, one has
\[  \Theta(t,z)\in {\mathcal B}_c(0) \quad \mbox{for all $t\ge0$}. \]
For given $c$ and the adjoint bound function $c^*(z)$ satisfying \eqref{B-6-1} (see Figure \ref{adjoint}), we consider two cases. \newline

\noindent $\bullet$~Case A:~ Suppose that
\[ c^*(z)=c. \]
Then, we can choose the entrance time $t_e(z) = 0$ to find the desired estimate \eqref{B-14}.  \newline

\noindent $\bullet$~Case B:~Suppose that
\[  c^*(z)<c. \]
Then, we can define the time-dependent index $M = M(t, z)\in[N]$ such that
	\begin{align*}
		|\theta_M(t, z)| = \max_{i\in[N]} |\theta_i(t, z)|.
	\end{align*}
For an instant $t$ such that
\begin{align*}
 |\theta_M(t,z)|\in(c^*(z), c),
 \end{align*}
we have
	\begin{align}\label{L3.2P1}
		\begin{aligned}
			\partial_t|\theta_M(t,z)| &= \nu_i~\sgn(\theta_M(t,z))-\frac{a(z)\kappa}{N}\sin|\theta_M(t,z)|\sum_{j=1}^N(1+\cos\theta_j(t,z))^{z}\\
			&\le \|\mathcal V\|_\infty-a(z)\kappa \sin|\theta_M(t, z)| (1+\cos\theta_M(t, z))^{z} \\
			&\le \|\mathcal V\|_\infty-a(z)\kappa \sin c (1+\cos c)^{z} < 0,
		\end{aligned}
	\end{align}
	where the definition of $c^*(z)$ is used in the last second inequality (see Figure \ref{adjoint}). Since the last nonzero term is constant in time $t$ for a given $z$, this shows the existence of $t_e(z)<\infty$. \newline

\noindent Now, it suffices to show that there exists a finite uniform bound $\tau_e$ for $t_e(z)$:
	\[  t_e(z)\le \tau_e, \quad \mbox{almost surely}. \]
For $t_e(z)<\infty$, it is obvious that
	\begin{align*}
		|\theta_{M(t_e(z), z)}(t_e(z), z)|=c^*(z)\quad\mbox{and}\quad|\theta_{M(0,z)}(0,z)|<c.
	\end{align*}
	This implies
	\begin{align*}
		c-c^*(z)&>|\theta_{M(0,z)}(0,z)|-|\theta_{M(t_e(z),z)}(t_e(z),z)|\\
		&=-\int_0^{t_e(z)}\partial_t|\theta_{M(t,z)}(t,z)|dt \\
		&\ge t_e(z)\big(-\|\mathcal{V}\|_\infty+a(z)\kappa\sin c(1+\cos c)^{z}\big),
	\end{align*}
	where the last inequality comes from the estimate \eqref{L3.2P1}. Note that the last term in this relation is positive, hence, we have
	\[t_e(z)\le\frac{c-c^*(z)}{a(z)\kappa\sin c(1+\cos c)^{z}-\|\mathcal{V}\|_\infty}.\]
Since $z\in L^\infty(\Omega)$, the right-hand side is bounded almost surely. Therefore, the existence of a constant $\tau_e$ is verified.
\end{proof}
In the following section, we discuss the extension of the second assertion in Theorem \ref{T2.1} to the random Winfree model with high-order couplings \eqref{A-6}.

\section{The Winfree model with uncertain high-order couplings} \label{sec:3}
\setcounter{equation}{0}
In this section, we provide fast relaxation toward death and local sensitivity analysis on the propagation of regularity in $z$-variable.

\subsection{Emergence of oscillator death} \label{sec:3.1}
In this subsection, we deal with oscillator death under the $\kappa$-assumption \eqref{kA_ineq}. If there exists an equilibrium in a trapped region, we can consider the convergence toward an equilibrium.
First, we consider a condition which guarantees the existence of an equilibrium.
\subsubsection{Existence of an equilibrium} \label{sec:3.1.1}
Suppose that $\Phi(z)=(\phi_1(z),\cdots,\phi_N(z)) \in (-\pi, \pi)^N$ is a random equilibrium of \eqref{A-6}:
\begin{align}\label{equil}
	0=\nu_i+\frac{\kappa}{N}S(\phi_i(z))\sum_{j=1}^NI_z(\phi_j(z)),
\end{align}
where
\begin{align} \label{C-1}
S(\theta)=-\sin\theta\quad\mbox{and}\quad I_z(\theta)= a(z) (1+\cos\theta)^z.
\end{align}
Now, we consider two types of ensemble:
\begin{align*}
\begin{aligned}
& \nu_i  \equiv 0,~\forall~i \in [N]:~\mbox{Homogeneous ensemble}, \\
& \exists~i \neq j \in [N]~\mbox{such that}~\nu_i \neq \nu_j:~\mbox{Heterogeneous ensemble}.
\end{aligned}
\end{align*}

\vspace{0.5cm}

\noindent $\bullet$ Case A (Homogeneous ensemble):~If all natural frequencies are zero:
\[ \nu_i=0, \quad  \forall~i \in [N], \]
then system \eqref{equil} - \eqref{C-1} becomes
\[ 0 = \frac{a(z)\kappa}{N} \sin(\phi_i(z))\sum_{j=1}^N (1 + \cos \phi_j(z))^z, \quad \forall~i \in [N]. \]
Since $\phi_i(z) \in (-\pi, \pi)$ for all $i \in [N]$,  we have
\[\Phi(z)=\left(0,\cdots,0\right).\]

\vspace{0.2cm}

\noindent $\bullet$ Case B (Heterogeneous ensemble):~If there exists a nonzero natural frequency $\nu_k\ne 0$, we divide \eqref{equil} by $\nu_k$  to find
\begin{align} \label{C-2}
		1+\frac{\kappa}{N}\frac{S(\phi_k(z))}{\nu_k}\sum_{j=1}^NI_z(\phi_j(z))=0.
\end{align}
Motivated by this relation, we define a function $F_z:=\left[-\frac{1}{\|\mathcal V\|_{\infty}},\frac{1}{\|\mathcal V\|_{\infty}}\right]\to\mathbb{R}$ by
\begin{align} \label{C-2-0}
F_z(s):=1+\frac{\kappa}{N}s\sum_{j=1}^NI_z(S^{-1}(\nu_js)).
\end{align}
Once we have a solution $s(z)$ to $F_z= 0$, i.e.,
\begin{align*}
F_z(s(z)) = 0,
\end{align*}
then, the state $\Phi(z)=\left(\phi_1(z),\cdots,\phi_N(z)\right)$ defined by
\begin{align*}
	\phi_j(z):= S^{-1}(\nu_js(z)),\quad\forall~j \in [N],
\end{align*}
satisfies \eqref{C-2}, i.e., it is an equilibrium. In addition, when $\nu_i=0$ for all $i\in[N]$, above definition covers Case A.
\begin{proposition}\label{P3.1}
\emph{(Existence of an equilibrium)}
Suppose that the $\kappa$-assumption holds. Then, there exists an equilibrium $\Phi(z) = (\phi_1(z), \ldots, \phi_N(z))$ of \eqref{A-6} in ${\mathcal B}_{c^*}(0)$, almost surely.
\end{proposition}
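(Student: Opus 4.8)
The plan is to follow the homogeneous/heterogeneous dichotomy fixed just before the statement. In the homogeneous case $\mathcal V=0$ the point $\Phi(z)=(0,\dots,0)$ already solves the equilibrium relation \eqref{equil}, and since $c^*(z)>0$ it lies in $\mathcal B_{c^*(z)}(0)$, so there is nothing to do. In the remaining case $\|\mathcal V\|_\infty>0$ I would produce a scalar $s(z)$ with $F_z(s(z))=0$ by the intermediate value theorem, but applied on the \emph{narrowed} interval $[s_0(z),0]$ with $s_0(z):=-\sin c^*(z)/\|\mathcal V\|_\infty$ rather than on the full domain $[-\|\mathcal V\|_\infty^{-1},\|\mathcal V\|_\infty^{-1}]$ of $F_z$; the point of this restriction is that any zero $s(z)\in(s_0(z),0)$ automatically satisfies $|\phi_j(z)|=|\arcsin(\nu_j s(z))|\le \arcsin(\|\mathcal V\|_\infty|s(z)|)<\arcsin(\sin c^*(z))=c^*(z)$, i.e. $\Phi(z)=(S^{-1}(\nu_1 s(z)),\dots,S^{-1}(\nu_N s(z)))\in\mathcal B_{c^*(z)}(0)$, and by the construction recalled before the statement it does solve \eqref{equil}.

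So everything reduces to locating a sign change of $F_z$ on $[s_0(z),0]$. Using $S(\theta)=-\sin\theta$, hence $S^{-1}(x)=-\arcsin x$ and $\cos(S^{-1}(x))=\sqrt{1-x^2}$, one has
\begin{equation*}
F_z(s)=1+\frac{\kappa a(z)}{N}\,s\sum_{j=1}^N\big(1+\sqrt{1-\nu_j^2 s^2}\big)^z,
\end{equation*}
which is continuous on $[s_0(z),0]$ (there $|\nu_j s|\le\sin c^*(z)<1$, so every square root is defined) with $F_z(0)=1>0$. The key estimate is negativity of $F_z$ at the left endpoint: since $\nu_j^2 s_0(z)^2\le\sin^2 c^*(z)$ gives $\sqrt{1-\nu_j^2 s_0(z)^2}\ge\cos c^*(z)>0$ (using $c^*(z)<\pi/2$), and $s_0(z)<0$, we get
\begin{equation*}
F_z(s_0(z))\le 1-\kappa a(z)\,\frac{\sin c^*(z)}{\|\mathcal V\|_\infty}\,\big(1+\cos c^*(z)\big)^z=1-\frac{\kappa a(z)\sin c\,(1+\cos c)^z}{\|\mathcal V\|_\infty}<0,
\end{equation*}
where the equality is exactly the defining identity \eqref{B-6-1} for $c^*(z)$ and the strict inequality is the $\kappa$-assumption \eqref{kA_ineq}. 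The intermediate value theorem then yields $s(z)\in(s_0(z),0)$ with $F_z(s(z))=0$, and the first paragraph turns this into $\Phi(z)\in\mathcal B_{c^*(z)}(0)$.

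Finally, for the statement to concern a genuine \emph{random} equilibrium I would record that $z\mapsto s(z)$ can be chosen measurably, e.g. $s(z):=\sup\{s\in[s_0(z),0]:F_z(s)=0\}$, so that $\omega\mapsto\Phi(z(\omega))$ is a bona fide random variable; since the $\kappa$-assumption holds almost surely, so does the conclusion. The only genuinely delicate point is the choice of the interval $[s_0(z),0]$: it must be narrow enough that its zero falls inside $\mathcal B_{c^*(z)}(0)$ and yet still leave $F_z$ negative at the left endpoint, and it is precisely the identity $\sin c^*(z)(1+\cos c^*(z))^z=\sin c\,(1+\cos c)^z$ that reconciles these two requirements with \eqref{kA_ineq}. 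I expect getting that matching right to be the crux, the rest being routine continuity bookkeeping.
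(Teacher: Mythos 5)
Your proposal is correct and follows essentially the same route as the paper: the same homogeneous/heterogeneous split, the intermediate value theorem on the same interval $\left[\tfrac{S(c^*(z))}{\|\mathcal V\|_\infty},0\right]=[-\sin c^*(z)/\|\mathcal V\|_\infty,\,0]$, negativity at the left endpoint via the defining identity \eqref{B-6-1} combined with the $\kappa$-assumption, and the same containment argument placing $\Phi(z)$ in $\mathcal B_{c^*(z)}(0)$. Writing $I_z\circ S^{-1}$ explicitly with square roots (which absorbs the paper's sign cases for $\nu_i$) and the added measurability remark are only cosmetic refinements.
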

\begin{proof}
First, if $\|\mathcal V\|_\infty=0$, it is obvious with $\Phi(z)=(0,\cdots,0)$, hence, we assume $\|\mathcal V\|_{\infty}\ne0$. We choose $\omega \in \Omega$ satisfying the condition \eqref{kA_ineq}, i.e.,
\begin{align} \label{C-2-2}
\kappa> -\frac{\|\mathcal{V}\|_\infty}{S(c) I_z(c)} =   -\frac{\|\mathcal{V}\|_\infty}{S( c^*(z(\omega))) I_z(c^*(z(\omega))},
 \end{align}
where we use \eqref{B-6-1}. We split the proof into two steps. \newline

\noindent $\bullet$~Step {\color{black}I}: First, we claim that for each $\omega \in \Omega$ satisfying \eqref{C-2-2}, we have
	\begin{align} \label{C-3}
	\exists~s(z(\omega))\in\left[\frac{S(c^*(z(\omega)))}{\|\mathcal V\|_\infty},0\right]\mbox{ such that }F_{z(\omega)}\big(s(z(\omega))\big)=0.
	\end{align}
Again, we suppress $\omega$-dependence in $z$, i.e., $z = z(\omega)$. \newline

\noindent {\it Proof of \eqref{C-3}}: Since $F_{z}$ is continuous on $\left[\frac{S(c^*(z))}{\|\mathcal V\|_\infty},~0\right]$ and $F_{z}(0)=1>0$, it suffices to show that
\begin{align} \label{C-3-1}
F_{z}\left(\frac{S(c^*(z))}{\|\mathcal V\|_\infty}\right)<0.
 \end{align}
 by the intermediate value theorem.

\vspace{0.2cm}

\noindent	$\diamond$ Case A: Suppose that there exists $i \in [N]$ such that $\nu_i\ge0$. Since
	\[-1<S(c^*(z))\le\frac{\nu_i}{\|\mathcal V\|_\infty}S(c^*(z))\le0,\]
	we have
	\[\frac{\pi}{2}>c^*(z)\ge S^{-1}\left(\frac{\nu_i}{\|\mathcal V\|_\infty}S(c^*(z))\right)\ge0,\]
	and successively,
	\begin{align} \label{C-3-2}
	I_{z}(c^*(z))\le I_{z}\left(S^{-1}\left(\frac{\nu_i}{\|\mathcal V\|_\infty}S(c^*(z))\right)\right).
	\end{align}
Then, we use \eqref{C-2-0},  \eqref{C-2-2}, \eqref{C-3-2} and
\[ \frac{S(c^*(z))}{\|\mathcal V\|_\infty} < 0 \]
to see
\begin{align*}
		F_{z}\left(\frac{S(c^*(z))}{\|\mathcal V\|_\infty}\right)&=1+\frac{\kappa}{N}\frac{S(c^*(z))}{\|\mathcal V\|_\infty}\sum_{j=1}^NI_{z}\left(S^{-1}\left(\nu_j\frac{S(c^*(z))}{\|\mathcal V\|_\infty}\right)\right)\\
		&\le1+ \kappa\frac{S(c^*(z))}{\|\mathcal V\|_\infty}I_{z}(c^*(z))<0,
	\end{align*}
which verifies \eqref{C-3-1}.

\vspace{0.2cm}

\noindent	$\diamond$ Case B:~Suppose that there exists $i \in [N]$ such that $\nu_i<0$. By the same argument as in Case A, we obtain
	\[I_z(-c^*(z))=I_{z}(c^*(z))\le I_{z}\left(S^{-1}\left(\frac{\nu_i}{\|\mathcal V\|_\infty}S(c^*(z))\right)\right).\]
This yields the desired estimate \eqref{C-3-1}.

	\vspace{0.2cm}

\noindent $\bullet$~Step {\color{black}II}:~By \eqref{C-3}, there exists a root $s(z)$ of $F_{z} = 0$ on $\left[\frac{S(c^*(z))}{\|\mathcal V\|_\infty},0\right]$ for a sample point $\omega$ satisfying \eqref{kA_ineq}. Hence, we can construct an equilibrium point of \eqref{A-6}.

	\vspace{.2cm}

\noindent $\bullet$~Step {\color{black}III}:~For a root $s(z)$ of $F_z$ obtained in Step {\color{black}II}, we need to show that
\[\Phi(z)=\Big(S^{-1}(\nu_1s(z)),\cdots,S^{-1}(\nu_Ns(z))\Big)\in\mathcal B_{c^*(z)}(0).\]
For $\nu_i\ge0$, we have
\[S(c^*(z))\le\frac{S(c^*(z))}{\|\mathcal V\|_\infty}\nu_i\le s(z)\nu_i\le0.\]
This leads to
\[c^*(z)\ge S^{-1}(\nu_is(z))\ge0.\]
Similarly, we can show for the case $\nu_i<0$.
\end{proof}

\subsubsection{Relaxation dynamics} \label{sec:3.1.2}
Let $\Theta$ and $\Phi(z)=(\phi_1(z),\cdots,\phi_N(z)) \in {\mathcal B}_{c^*}(0)$ be a solution processes  to \eqref{A-6} and an equilibrium whose existence is guaranteed by Proposition \ref{P3.1}, i.e., it satisfies
\begin{align}\label{C-3-3}
0=\nu_i-\frac{a(z)\kappa}{N}\sin\phi_i(z)\sum_{j=1}^N\big(1+\cos\phi_j(z)\big)^z.
\end{align}
It follows from Corollary \ref{C2.1} that
\[
\Theta(t,z)\in {\mathcal B}_{c^*(z)}(0), \quad \forall~ t > \tau_e,
\]
where $\tau_e$ has the same meaning in Corollary \ref{C2.1}. Now, we introduce a deviation from the equilibrium $\Phi(z)$:
\[ \hat{\theta}_i(t, z):=\theta_i(t, z)-\phi_i(z), \quad  \forall~i \in [N]. \]
By \eqref{C-3-3}, the deviation $\hat{\theta}_i$ satisfies
	\begin{align}
	\begin{aligned} \label{C-3-4}
		\partial_t\hat{\theta}_i &= \partial_t  (\theta_i - \phi_i)  \\
		&=~\nu_i-\frac{a(z)\kappa}{N}\sin\theta_i\sum_{j=1}^N(1+\cos\theta_j)^{z}   \\
		&=~\frac{a(z)\kappa}{N} \left[  \sin\phi_i \sum_{j=1}^N\big(1+\cos\phi_j \big)^z - \sin\theta_i\sum_{j=1}^N(1+\cos\theta_j)^{z} \right ] \\
		& =~-\frac{a(z)\kappa}{N} \left[  ( \sin \theta_i - \sin\phi_i ) \sum_{j=1}^N\big(1+\cos\phi_j \big)^z \right. \\
		& \hspace{2.5cm} + \left. \sin \theta_i \sum_{j=1}^N \Big (  (1+\cos\theta_j)^z  - \big(1+\cos\phi_j \big)^z  \Big) \right ] \\
		&=-\frac{a(z)\kappa}{N}\cos {\tilde \theta}_{i,1}\cdot\hat{\theta}_i\sum_{j=1}^N\left(1+\cos \phi_j\right)^{z} \\
		&\hspace{.4cm}+\frac{za(z)\kappa}{N}\sin\theta_i \sum_{j=1}^N \left(1+\cos{\tilde \theta}_{j,2}\right)^{{z}-1}\sin {\tilde \theta}_{j,2}\cdot \hat{\theta}_j,
	\end{aligned}
	\end{align}
	where $\tilde{\theta}_{i,1}(z,t)$ and $\tilde{\theta}_{j,2}(z,t)$ are intermediate values determined by the mean value theorem. Using this relation, we will show the convergence toward an equilibrium in the following theorem.
\begin{theorem} \label{T3.1}
	Suppose the $\kappa$-assumption holds, and let $\Theta(t,z)$ be a global solution process to \eqref{A-6} {\color{black}with initial data $\Theta^{in}(z)\in\mathcal B_c(0)$, almost surely}. Then, $\Theta(t,z)$ converges to an equilibrium $\Phi(z) \in {\mathcal B}_{c^*(z)}(0)$ exponentially fast, almost surely.
\end{theorem}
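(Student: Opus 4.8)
The plan is to show that the deviation vector $\hat\Theta(t,z) = \Theta(t,z) - \Phi(z)$ decays exponentially in the standard $\ell^2$-norm on $\bbr^N$, uniformly (almost surely) after the entrance time $\tau_e$ from Corollary \ref{C2.1}. First I would fix a sample $\omega$ satisfying \eqref{kA_ineq} and restrict attention to $t \ge \tau_e$, so that by Corollary \ref{C2.1} both $\Theta(t,z)$ and the equilibrium $\Phi(z)$ lie in ${\mathcal B}_{c^*(z)}(0)$; in particular all intermediate values $\tilde\theta_{i,1}, \tilde\theta_{j,2}$ produced by the mean value theorem in \eqref{C-3-4} also lie in $(-c^*(z), c^*(z)) \subset (-\beta(z), \beta(z))$. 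This is the crucial geometric input: on $[0,\beta(z)]$ the map $\theta \mapsto \sin\theta(1+\cos\theta)^z$ is increasing, so $\cos\tilde\theta_{i,1} > \cos c^*(z) > 0$ and the first (dissipative) term on the right of \eqref{C-3-4} has a strictly negative coefficient bounded away from zero.

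Next I would estimate $\frac{1}{2}\frac{d}{dt}|\hat\Theta|^2 = \sum_i \hat\theta_i \partial_t \hat\theta_i$ using \eqref{C-3-4}. The diagonal term contributes
\[
-\frac{a(z)\kappa}{N}\sum_{i=1}^N \cos\tilde\theta_{i,1}\,\hat\theta_i^2 \sum_{j=1}^N (1+\cos\phi_j)^z \le -a(z)\kappa \cos c^*(z)\,(1+\cos c^*(z))^z\, |\hat\Theta|^2 =: -A(z)|\hat\Theta|^2,
\]
since each summand $(1+\cos\phi_j)^z \ge (1+\cos c^*(z))^z$. The off-diagonal (coupling) term is bounded in absolute value, using $|\sin\theta_i|\le 1$, $|\sin\tilde\theta_{j,2}| \le \sin c^*(z)$, $(1+\cos\tilde\theta_{j,2})^{z-1}\le 2^{z-1}$, and Cauchy–Schwarz on the double sum $\sum_i |\hat\theta_i| \sum_j |\hat\theta_j| \le N|\hat\Theta|^2$, by
\[
\frac{za(z)\kappa}{N}\cdot 2^{z-1}\sin c^*(z)\cdot N |\hat\Theta|^2 =: B(z)|\hat\Theta|^2.
\]
Thus $\frac{d}{dt}|\hat\Theta|^2 \le -2(A(z)-B(z))|\hat\Theta|^2$, and Grönwall's inequality gives exponential decay provided $A(z) > B(z)$ almost surely.

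The main obstacle — and the step requiring genuine care rather than routine bookkeeping — is verifying the sign condition $A(z) > B(z)$, i.e.
\[
a(z)\kappa\cos c^*(z)(1+\cos c^*(z))^z > z\,a(z)\kappa\,2^{z-1}\sin c^*(z),
\]
which after cancelling $a(z)\kappa$ reduces to a purely geometric inequality in $z$ and $c^*(z)$. Naively this need not hold for every admissible $c$, so I expect one of two remedies to be needed: either (i) strengthen the hypothesis slightly — note that the $\kappa$-assumption only fixes $c$ and $\kappa$, and one has freedom to first wait until $\Theta$ enters an even smaller box $\mathcal B_{\bar c}(0)$ with $\bar c$ chosen so small that $\sin\bar c$ is negligible compared to $\cos\bar c(1+\cos\bar c)^z$ (possible since $c^*(z) \le \beta(z) < \pi/2$ forces $\cos c^*(z)$ bounded below, while $\sin c^*(z)$ can be made small by shrinking the initial bound) — this is legitimate because Corollary \ref{C2.1} and a repeated application of the trapping argument let us reach arbitrarily small boxes as long as the corresponding $\kappa$-assumption for $\bar c$ still holds; or (ii) exploit the equilibrium relation \eqref{C-3-3} more tightly, writing $\sin\theta_i = \sin\phi_i + \cos\tilde\theta_{i}\hat\theta_i$ and absorbing the $\sin\phi_i$ part of the coupling term into a cancellation with the diagonal term before estimating, which sharpens $B(z)$ to something proportional to $|\hat\Theta|$ rather than $|\hat\Theta|$ with the crude constant above. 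I would pursue (i) first since it is cleanest and keeps the almost-sure uniformity in $z$ transparent thanks to $z \in L^\infty(\Omega)$; the boundedness of $z$ is exactly what makes $A(z)$, $B(z)$, and hence the decay rate $A(z)-B(z)$ bounded below by a deterministic positive constant, yielding the uniform exponential rate claimed in the theorem.
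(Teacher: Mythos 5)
Your overall architecture (restrict to $t\ge\tau_e$, use Corollary \ref{C2.1} and Proposition \ref{P3.1} to place $\Theta$, $\Phi$, and the mean-value points in ${\mathcal B}_{c^*(z)}(0)$, then derive a Gr\"onwall inequality for the deviation) matches the paper, and the choice of $\ell^2$ over the paper's $\ell^1$ sum $\sum_i|\hat\theta_i|$ is immaterial. The genuine gap is in your estimate of the coupling term. You bound $|\sin\theta_i|\le 1$ and $(1+\cos\tilde\theta_{j,2})^{z-1}\le 2^{z-1}$, which gives $B(z)=z a(z)\kappa\, 2^{z-1}\sin c^*(z)$, and as you correctly observe the sign condition $A(z)>B(z)$ then fails in general. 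But neither of your remedies closes this. Remedy (i) is not available under the stated hypotheses: Corollary \ref{C2.1} only guarantees entrance into ${\mathcal B}_{c^*(z)}(0)$, and since the limiting equilibrium $\Phi(z)$ is generically nonzero (its components are $S^{-1}(\nu_i s(z))$, which can be of order $c^*(z)$), the phases do \emph{not} enter arbitrarily small boxes around the origin; the trapping argument of Proposition \ref{P2.1} only forces $\partial_t|\theta_M|<0$ while $|\theta_M|\in(c^*(z),c)$ and gives no contraction below $c^*(z)$. Remedy (ii) gestures at a cancellation that is not actually needed.

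The correct fix, which is what the paper does, is simply to use the sharp sup bounds already available inside ${\mathcal B}_{c^*(z)}(0)$: first, $|\sin\theta_i|\le\sin c^*(z)$ rather than $1$; second, since $x\mapsto\sin x\,(1+\cos x)^{z-1}$ is increasing on $\bigl[0,\arccos\bigl(\tfrac{z-1}{z}\bigr)\bigr]\supset[0,\beta(z)]\supset[0,c^*(z)]$, one has $\bigl|(1+\cos\tilde\theta_{j,2})^{z-1}\sin\tilde\theta_{j,2}\bigr|\le(1+\cos c^*(z))^{z-1}\sin c^*(z)$ rather than $2^{z-1}\sin c^*(z)$. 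The coupling term is then controlled by $\kappa z a(z)(1+\cos c^*(z))^{z-1}\sin^2 c^*(z)$, and writing $\sin^2 c^*=(1-\cos c^*)(1+\cos c^*)$ the total coefficient becomes
\begin{align*}
\kappa z a(z)(1+\cos c^*(z))^{z}\Bigl(1-\tfrac{z+1}{z}\cos c^*(z)\Bigr),
\end{align*}
which is negative precisely because $c^*(z)<\beta(z)=\arccos\bigl(\tfrac{z}{z+1}\bigr)$. In other words, the definition of the adjoint bound $c^*(z)$ in \eqref{B-6-1} is tuned exactly so that the dissipative diagonal term beats the coupling term; no shrinking of the box and no further use of the equilibrium relation is required. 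With these two sharpened bounds your $\ell^2$ computation goes through verbatim.
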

\begin{proof}
We split its proof into two steps. \newline

\noindent $\bullet$~Step {\color{black}I} (Derivation of differential inequality):~By Corollary \ref{C2.1} and Proposition \ref{P3.1}, there exists an equilibrium $\Phi(z)$ in the squared box ${\mathcal B}_{c^*(z)}(0)$ and the solution process $\Theta$ is captured in the same box ${\mathcal B}_{c^*(z)}(0)$ after a certain time $\tau_e>0$ almost surely:
	\begin{align} \label{C-3-4-1}
	\Phi(z),~~\Theta(t, z)\in {\mathcal B}_{c^*(z)}(0) \subset {\mathcal B}_{\frac{\pi}{2}}(0),\quad\mbox{for all}~~t\ge \tau_e,\quad\mbox{almost surely.}
	\end{align}
	Now, we consider $t\ge\tau_e$. It follows from \eqref{C-3-4} that
\begin{align} \label{C-3-5}
\partial_t\hat{\theta}_i = -\frac{a(z)\kappa}{N}\cos {\tilde \theta}_{i,1}\cdot\hat{\theta}_i\sum_{j=1}^N\left(1+\cos \phi_j\right)^{z} +\frac{za(z)\kappa}{N}\sin\theta_i \sum_{j=1}^N \left(1+\cos{\tilde \theta}_{j,2}\right)^{{z}-1}\sin {\tilde \theta}_{j,2}\cdot \hat{\theta}_j.
\end{align}
We multiply $\sgn(\hat{\theta}_i)$ to \eqref{C-3-5} and find
	\begin{align}
	\begin{aligned} \label{C-3-6}
		\partial_t|\hat{\theta}_i| &=- \Big( \frac{a(z)\kappa}{N}\cos\tilde{\theta}_{i,1}\cdot  \sum_{j=1}^N \left(1+\cos \phi_j\right)^{z} \Big) |\hat{\theta}_i| \\
		&\hspace{.4cm}+\frac{z a(z)\kappa}{N}\sin\theta_i \cdot \sgn(\hat{\theta}_i)\sum_{j=1}^N\left(1+\cos\tilde{\theta}_{j,2}\right)^{z-1}\sin\tilde{\theta}_{j,2}\cdot\hat{\theta}_j.
	\end{aligned}
	\end{align}
Now, we sum up \eqref{C-3-6} over all $i \in [N]$ to get
	\begin{align}
	\begin{aligned}  \label{C-3-7}
		\partial_t \left(\sum_{i=1}^N|\hat{\theta}_i|\right) &=-\frac{a(z)\kappa}{N}\sum_{i,j=1}^N\cos\tilde{\theta}_{i,1}\left(1+\cos\phi_j\right)^{z}  |\hat{\theta}_i|  \\
		&\hspace{.4cm}+\frac{za(z)\kappa}{N}\sum_{i,j=1}^N \sin\theta_i \cdot \sgn(\hat{\theta}_i) \left(1+\cos {\tilde \theta}_{j,2} \right)^{z-1}\sin\tilde{\theta}_{j,2}\cdot\hat{\theta}_j \\
		&=: {\mathcal I}_{11} + {\mathcal I}_{12}.
	\end{aligned}
	\end{align}
Below, we estimate the term ${\mathcal I}_{1i}$ one by one. \newline

\noindent $\diamond$~Case A (Estimate of ${\mathcal I}_{11}$): We use \eqref{C-3-4-1} to see that
\[
\cos\tilde{\theta}_{i,1} \geq  \cos c^*(z)\quad\mbox{and} \quad  \left(1+\cos \phi_j\right)^{z} \geq (1 +  \cos c^*(z))^z.
\]
These imply
\begin{align} \label{C-3-8}
	\begin{aligned}
		{\mathcal I}_{11} &= -\frac{a(z)\kappa}{N}\sum_{i,j=1}^N\cos\tilde{\theta}_{i,1}\left(1+\cos\phi_j\right)^{z}  |\hat{\theta}_i|  \\
		&\leq - \kappa a(z) \cos c^*(z) (1 + \cos c^*(z))^z  \sum_{i=1}^{N}  |\hat{\theta}_i|.
	\end{aligned}
\end{align}

\vspace{0.2cm}

\noindent $\diamond$~Case B (Estimate of ${\mathcal I}_{12}$):~First, we use \eqref{C-3-4-1} to see
\begin{align} \label{C-3-9}
 | \sin\theta_i\cdot \sgn(\hat{\theta}_i)| \leq  \sin c^*(z).
 \end{align}
 On the other hand, we use the fact that
	\[f(x)=\sin x\left(1+\cos x\right)^{z-1}~~\mbox{is monotonically increasing on}~~\left[0,\arccos\left(\frac{z-1}{z}\right)\right], \]
\begin{align*}
0<c^*(z)\le\beta(z)<\frac{\pi}{2}\quad\mbox{and}\quad \beta(z)=\arccos\left(\frac{z}{z+1}\right)<\arccos\left(\frac{z-1}{z}\right)
 \end{align*}
to see that
\begin{align} \label{C-3-10}
\Big| \left(1+\cos\tilde{\theta}_{j,2}\right)^{z-1}\sin\tilde{\theta}_{j,2}  \Big| \leq (1 + \cos c^*(z))^{z-1} \sin c^*(z).
\end{align}
Now, we use \eqref{C-3-9} and \eqref{C-3-10} to see
\begin{align}
\begin{aligned} \label{C-3-11}
|{\mathcal I}_{12}| &\leq \frac{\kappa za(z)}{N}\sum_{i,j=1}^N  \Big| \sin\theta_i \cdot \sgn(\hat{\theta}_i) \left(1+\cos {\tilde \theta}_{j,2} \right)^{z-1}\sin\tilde{\theta}_{j,2}\cdot \hat{\theta}_j \Big|  \\
&\leq \frac{\kappa za(z)}{N}\sum_{i,j=1}^N   (1 + \cos c^*(z))^{z-1} \sin^2 c^*(z) |\hat{\theta}_j|  \\
&= \kappa za(z)  (1 + \cos c^*(z))^{z-1} \sin^2 c^*(z) \sum_{j=1}^{N} |\hat{\theta}_j|.
\end{aligned}
\end{align}
In \eqref{C-3-7}, we combine \eqref{C-3-8} and \eqref{C-3-11} to find
	\begin{align*}
	\begin{aligned} \label{C-3-12}
		\partial_t \sum_{i=1}^N|\hat{\theta}_i |  &\le- \kappa a(z) \left(\cos c^*(z)\left(1+\cos c^*(z)\right)^{z}\right)\sum_{i = 1}^N|\hat{\theta}_i| \\
		&\hspace{.4cm}+ \kappa z a(z) \left(1+\cos c^*(z)\right)^{z-1}\sin^2 c^*(z)\sum_{j=1}^N|\hat{\theta}_j|\\
		&=\kappa a(z)(1+\cos c^*(z))^{z}\Big(-\cos c^*(z)+ z(1-\cos c^*(z))\Big)\sum_{i=1}^N|\hat{\theta}_i| \\
		&=\kappa z a(z)(1+\cos c^*(z))^{z}\Big(1 - \frac{z+1}{z} \cos c^*(z) \Big)\sum_{i=1}^N|\hat{\theta}_i| \\
		&=:C(z)\sum_{i=1}^N|\hat{\theta}_i|.
	\end{aligned}
	\end{align*}

\vspace{0.5cm}

\noindent $\bullet$ Step {\color{black}II}:~From
\[c^*(z) < \beta(z) = \cos^{-1}\left(\frac{z}{z+1}\right),\]
we get
\[ \cos c^*(z) > \cos \beta(z) =   \cos \Big( \cos^{-1}\left(\frac{z}{z+1}\right) \Big) = \frac{z}{z+1}  \]
i.e.,
\begin{align} \label{C-3-12-1}
 \frac{z+1}{z} \cos c^*(z) > 1,
\end{align}
which makes $C(z)$ be negative and we obtain desired exponential decay of $\sum_{i=1}^N|\hat{\theta}_i |$.
\end{proof}

\subsection{Local sensitivity analysis} \label{sec:3.2}
In this subsection, we provide a local sensitivity analysis for the random Winfree model \eqref{A-6} with high-order couplings:
\begin{align*}
\partial_t \theta_i(t,z)=\nu_i-\frac{\kappa}{N} a(z) \sin\theta_i(t,z)\sum_{j=1}^N\big(1+\cos\theta_j(t,z)\big)^z.
 \end{align*}
To see the random effect, we expand the phase process $\theta_{i}(t,z + dz)$  via Taylor's expansion in $z$-variable:
\begin{align} \label{C-3-14}
\theta_i(t, z + dz) = \theta_i(t,z) +  \partial_z \theta_i(t,z)  dz + \frac{1}{2} \partial_z^2 \theta_i(t,z) (dz)^2 + \cdots.
\end{align}
Thus, the local sensitivity estimates deal with the dynamic behaviors of the sensitivity vectors $ \partial^k_z \Theta$ consisting of coefficients in the R.H.S. of \eqref{C-3-14}.  For this, we consider $L^\infty$-random variable $z:\Omega\to[1,\infty)$, which satisfies
\[z_\sharp\mathbb{P}\ll\mu,\]
where $\mathbb{P}$ is the probability measure on $(\Omega,\mathcal F,\mathbb{P})$ and $\mu$ is the Borel measure on $\mathbb{R}$. In other words, we do not take into account a random variable which is not appropriate in this context, for example, constant, Dirac-delta and so on. Hence, we can represent the push-forward measure $z_\sharp\mathbb{P}$ in terms of the Borel measure $\mu$ using the probability density function $\rho(s)$ as follows:
\begin{align*}
	\int_{E}g(s)dz_\sharp\mathbb{P}(s)=\int_{E}g(s)\rho(s)d\mu(s)\quad\mbox{for all}~~ E\in\mathcal{B}\big([1,\infty)\big).
\end{align*}
For notation simplicity, we will abbreviate $d\mu(s)$ as $ds$. Before we move on further, we suppress $t$-dependence in $\Theta(t,z)$:
\[ \Theta(z) \equiv \Theta(t,z), \]
and  set several norms. For $\Theta(z) = (\theta_1(z), \cdots, \theta_N(z))$, we denote the Euclidean norm $\ell^2$ of $\Theta$ by $|\cdot|$. Similarly, we use $\| \cdot \|$ to denote the weighted $L^2(\rho dz)$-norm in $[1, \infty)$:
\begin{align*}
\begin{aligned}
& |\partial^k_z\Theta(z)| :=\left(\sum_{i=1}^N|\partial^k_z\theta_i(z)|^2\right)^{1/2}, \quad k= 0,1,  \qquad \|\Theta \| :=\left(\int_{[1,\infty)} |\Theta(z)|^2 \rho(z)dz\right)^{1/2}, \\
& \|\partial_z\Theta\| :=\left(\int_{[1,\infty)} |\partial_z\Theta(z)|^2 \rho(z)dz\right)^{1/2}, \quad \|\Theta\|_{H_z^1} :=\Big(\|\Theta\|^2+\|\partial_z\Theta\|^2\Big)^{1/2}.
\end{aligned}
\end{align*}
Unlike the Cucker-Smale and Kuramoto models, which are governed by the distance between particles, the dynamics of Winfree model is influenced by the locations of each particle. Hence, it is hard to consider the local sensitivity in terms of flocking. Even if we analyze the relative distance $\theta_i-\theta_j$, since we have to exploit the logarithm to get the derivative of $(1+\cos\theta_j(t,z))^z$, $\Theta(t,z)$ should be in some bounded region contained in ${\mathcal B}_{\frac{\pi}{2}}(0)$. \newline

For a later use, we set
	\begin{align*}
		C_{1,1,1}(z) :=
		\begin{dcases}
			2^z\cos c,\quad\mbox{if}~~\cos c<0,\\
			\cos c(1+\cos c)^z,\quad\mbox{if}~~\cos c\ge0,
		\end{dcases}
	\end{align*}
and define random coefficients $C_i(t,z),~i=1,2$  as follows:
	\begin{align}
	\begin{aligned} \label{C-3-14-1}
	&	C_1(t,z):=
		\begin{dcases}
			\kappa a(z)\left(-C_{1,1,1}+\frac{z}{\sqrt{2z-1}}\left(\frac{2z-1}{z}\right)^z\right)=:C_{1,1}(z), \quad & t\in(0, \tau_e),\\
			\kappa a(z)z\left(1-\frac{z+1}{z}\cos c^*(z)\right)\big(1+\cos c^*(z)\big)^z=:C_{1,2}(z),\quad & t\in[\tau_e,\infty),
		\end{dcases} \\
	&	C_2(t,z):=
		\begin{dcases}
			\kappa|a'(z)|2^z\sqrt{N}+\kappa a(z)2^z\sqrt{N}\ln 2=:C_{2,1}(z), \qquad  & t \in(0,\tau_e),\\
			\kappa|a'(z)|2^z\sqrt{N}\sin c^*(z)+\kappa a(z)2^z\sqrt{N}\ln 2\cdot\sin c^*(z)=:C_{2,2}(z),\quad & t\in[\tau_e,\infty).
		\end{dcases}
	\end{aligned}
	\end{align}
In what follows, for notation simplicity, we suppress $z$-dependence in $\theta_i$ and $C_i$ as long as the context is clear:
\[ \theta_i(t) \equiv \theta_i(t,z). \]

\begin{lemma}\label{L3.1}
\emph{(Propagation of $\partial_z\Theta$)}
	Suppose that the $\kappa$-assumption holds, and let $\Theta = \Theta(t,z)$ be a global solution process to \eqref{A-6}. Then, $\|\partial_z\Theta(t) \|$ satisfies a differential inequality:
	\[
			\partial_t\|\partial_z\Theta(t) \| \le C_1(t{\color{black},z}) \|\partial_z\Theta(t) \| +C_2(t{\color{black},z}), \quad\forall~t > 0.
	\]
\end{lemma}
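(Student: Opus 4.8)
The plan is to differentiate the $\theta$-equation \eqref{A-6} with respect to $z$, obtain an ODE system for the sensitivity vector $\partial_z\Theta$, and then derive a Grönwall-type differential inequality for $\|\partial_z\Theta(t)\|$ by testing against $\sgn(\partial_z\theta_i)$ and summing. The key point is that the $z$-derivative of the right-hand side splits into two structurally different contributions: (i) terms where the $z$-derivative hits one of the $\theta$'s (producing factors of $\partial_z\theta_i$ or $\partial_z\theta_j$), which give the ``linear in $\partial_z\Theta$'' part controlled by $C_1$, and (ii) terms where the $z$-derivative hits the explicit $z$-dependence in $a(z)$ and in the exponent of $(1+\cos\theta_j)^z$ (the latter via $\partial_z (1+\cos\theta_j)^z = (1+\cos\theta_j)^z \ln(1+\cos\theta_j)$), which are inhomogeneous and get absorbed into $C_2$.

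First I would write $w_i := \partial_z\theta_i$ and compute, from \eqref{A-6},
\begin{align*}
\partial_t w_i = -\frac{\kappa}{N}\Big[ a'(z)\sin\theta_i \sum_j (1+\cos\theta_j)^z + a(z)\cos\theta_i\, w_i \sum_j (1+\cos\theta_j)^z + a(z)\sin\theta_i \sum_j \partial_z\big((1+\cos\theta_j)^z\big)\Big],
\end{align*}
where $\partial_z\big((1+\cos\theta_j)^z\big) = (1+\cos\theta_j)^z\big(\ln(1+\cos\theta_j) - \tfrac{z\sin\theta_j}{1+\cos\theta_j} w_j\big)$. Multiplying by $\sgn(w_i)$, summing over $i$, and using $\partial_t|w_i| = \sgn(w_i)\partial_t w_i$ (valid a.e.), the terms carrying a factor $w_i$ or $w_j$ furnish the coefficient of $\|\partial_z\Theta\|$ pointwise in $z$, while the $a'(z)$-term and the $\ln(1+\cos\theta_j)$-term are the inhomogeneous remainder. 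To bound these I would split according to the time regime: for $t<\tau_e$ only the a priori bound $|\theta_i|<c$ from Proposition \ref{P2.1} is available, giving $1+\cos\theta_j \le 2$, $|\ln(1+\cos\theta_j)|\le \ln 2$, and a crude bound on $\sin\theta_i(1+\cos\theta_i)^{z-1}$ via its maximum over $(0,\pi)$ — this is where the $\frac{z}{\sqrt{2z-1}}\big(\frac{2z-1}{z}\big)^z$ factor in $C_{1,1}$ comes from, as the maximum of $x\mapsto \sin x(1+\cos x)^{z-1}$; for $t\ge\tau_e$ the sharper confinement $\Theta,\Phi\in\mathcal B_{c^*(z)}(0)$ from Corollary \ref{C2.1} lets me replace $c$ by $c^*(z)$ and recover exactly the coefficient structure of $C_{1,2}$ used in Theorem \ref{T3.1}, plus the $\sin c^*(z)$ gain in $C_{2,2}$. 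Finally I would take $\partial_t\|\partial_z\Theta\| = \partial_t\big(\int |\partial_z\Theta(z)|^2\rho\,dz\big)^{1/2} \le \big(\int (\partial_t|\partial_z\Theta(z)|^2)\rho\,dz\big)^{1/2}/(2\|\partial_z\Theta\|)$-type manipulation, or more simply note $\partial_t|\partial_z\Theta(z)| \le \sum_i \partial_t|w_i| \le C_1(t,z)|\partial_z\Theta(z)| + C_2(t,z)\cdot(\text{dimensional factor})$ pointwise in $z$ — consistent with the $\sqrt{N}$ appearing in $C_2$ — and then integrate against $\rho\,dz$ using Minkowski's integral inequality to pass from the pointwise-in-$z$ estimate to the $\|\cdot\|$-norm estimate.

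The main obstacle I expect is handling the logarithmic term $\ln(1+\cos\theta_j)$ and the factor $\frac{\sin\theta_j}{1+\cos\theta_j}$: both blow up as $\theta_j\to\pm\pi$, so the estimate genuinely requires that the trajectory has entered a region bounded away from $\pi$ — precisely the role of the $\kappa$-assumption, which forces $|\theta_i|<c<\pi$ for all time via Proposition \ref{P2.1}. Keeping the two time regimes $(0,\tau_e)$ and $[\tau_e,\infty)$ separate, and matching the bounds to the precise definitions of $C_{1,j}, C_{2,j}$ in \eqref{C-3-14-1}, is the bookkeeping-heavy part; the monotonicity fact that $x\mapsto \sin x(1+\cos x)^{z-1}$ increases on $[0,\arccos\frac{z-1}{z}]$ (already invoked in Theorem \ref{T3.1}) together with $c^*(z)\le\beta(z)<\arccos\frac{z-1}{z}$ is what makes the post-entrance estimate clean. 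A secondary technical point is justifying differentiability of $t\mapsto\theta_i(t,z)$ in $z$ and the interchange of $\partial_t$ and $\partial_z$, which follows from smooth dependence on parameters for the ODE \eqref{A-6} on the invariant compact set, and the a.e. differentiability of $|w_i|$, standard for Lipschitz functions of $t$.
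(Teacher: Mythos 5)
Your proposal is correct and follows essentially the same route as the paper: differentiate \eqref{A-6} in $z$, split the result into the same four contributions (the $a'(z)$ term, the $\cos\theta_i\,\partial_z\theta_i$ term, the $\ln(1+\cos\theta_j)$ term, and the $z\sin\theta_j(1+\cos\theta_j)^{z-1}\partial_z\theta_j$ term), treat the regimes $t<\tau_e$ and $t\ge\tau_e$ separately via Proposition \ref{P2.1} and Corollary \ref{C2.1}, and use the extremum of $\sin x(1+\cos x)^{z-1}$ exactly as in \eqref{C-3-23}. The only substantive difference is that the paper multiplies \eqref{C-3-15} by $\partial_z\theta_i$ and runs an $\ell^2$ energy estimate with Cauchy--Schwarz (which is what produces the $\sqrt N$ in $C_{2}$ of \eqref{C-3-14-1}), whereas your primary route of testing against $\sgn(\partial_z\theta_i)$ and summing $|\partial_z\theta_i|$ yields the same structure but with $N$ in place of $\sqrt N$ after converting $\ell^1$ to $\ell^2$ (and your auxiliary inequality $\partial_t|\partial_z\Theta|\le\sum_i\partial_t|\partial_z\theta_i|$ is not valid in general), so to recover the stated constants verbatim you should adopt the $\ell^2$ pairing.
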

\begin{proof}
We differentiate \eqref{A-6} with respect to $z$ to get
	\begin{align}
	\begin{aligned} \label{C-3-15}
		\partial_t\partial_z\theta_i &=-\frac{\kappa}{N}\sum_{j=1}^N \Bigg(a'(z)\sin\theta_i+a(z)\cos\theta_i\cdot\partial_z\theta_i +a(z)\sin\theta_i\ln(1+\cos\theta_j) \\
		& \hspace{2cm}  -\frac{a(z)z\sin\theta_i\sin\theta_j\cdot\partial_z\theta_j}{1+\cos\theta_j}\Bigg) (1+\cos\theta_j)^z.
	\end{aligned}
	\end{align}
	We multiply \eqref{C-3-15} by $\partial_z\theta_i$ and sum up the resulting relation with respect to all $i \in [N]$ to find
	\begin{align}\label{C-3-16}
		\begin{aligned}
			& \partial_t\left(\frac{1}{2}\sum_{i=1}^N|\partial_z\theta_i|^2\right) \\
			& \hspace{1cm} =-\frac{\kappa}{N}\sum_{i,j = 1}^N a'(z)\partial_z\theta_i\cdot\sin\theta_i(1+\cos\theta_j)^z-\frac{\kappa}{N}\sum_{i,j=1}^{N} a(z)\cos\theta_i\cdot|\partial_z\theta_i|^2(1+\cos\theta_j)^z\\
			& \hspace{1.4cm}-\frac{\kappa}{N}\sum_{i,j =1}^Na(z)\sin\theta_i\cdot\partial_z\theta_i(1+\cos\theta_j)^z\ln(1+\cos\theta_j)\\
			& \hspace{1.4cm} +\frac{a(z)z\kappa}{N}\sum_{i,j=1}^{N} \sin\theta_i\sin\theta_j\cdot\partial_z\theta_i\cdot\partial_z\theta_j\cdot(1+\cos\theta_j)^{z-1} \\
			& \hspace{1cm} =:\mathcal{I}_{21} +\mathcal{I}_{22} +\mathcal{I}_{23} +\mathcal{I}_{24}.
		\end{aligned}
	\end{align}
In the sequel, we estimate the term ${\mathcal I}_{2i}$ one by one. \newline

 Recall that our goal is to derive an upper bound by the term containing $\|\partial_z\Theta\|$. Here, we will use some basic relations in trigonometric functions:  For $0<c<\pi$ with
	\[ 0<c^*(z)\le\beta(z)\le c<\pi, \]
	we have
	\begin{align*}
			\cos c<\cos x\quad\mbox{for}~~x\in(-c,c) \quad\mbox{and} \quad  0<\cos c^*(z)<\cos x\quad\mbox{for}~~x\in(-c^*(z),c^*(z)).
	\end{align*}

	\vspace{0.2cm}

\noindent $\bullet$ Case A (Estimate of $\mathcal{I}_{21}$): We use the Cauchy-Schwarz inequality and $0 \leq (1+\cos\theta_j)^z \leq 2^z$  to obtain
	\begin{align}
	\begin{aligned} \label{C-3-17}
		|\mathcal{I}_{21}| &= \Big| -\frac{\kappa}{N}\sum_{i,j =1}^{N} a'(z)\partial_z\theta_i\cdot\sin\theta_i(1+\cos\theta_j)^z \Big| \\
		&\le\frac{\kappa|a'(z)|}{N}\sum_{i =1}^{N} |\partial_z\theta_i\cdot\sin\theta_i|\sum_{j=1}^{N} (1+\cos\theta_j)^z\\
		&\le\kappa|a'(z)|2^z\left(\sum_{i =1}^{N} |\partial_z\theta_i|^2\right)^{1/2}\left(\sum_{i=1}^{N} \sin^2\theta_i\right)^{1/2}.
	\end{aligned}
	\end{align}
	Now we use \eqref{C-3-17} and Corollary \ref{C2.1} to get
	\begin{align}
	\begin{aligned} \label{C-3-18}
	 |\mathcal{I}_{21}| \le
		\begin{dcases}
			\kappa|a'(z)|2^z\sqrt{N}\left(\sum_{i=1}^{N} |\partial_z\theta_i|^2\right)^{1/2},\hspace{1.8cm}  & t\in(0,\tau_e),\\
			\kappa|a'(z)|2^z\sqrt{N}\sin c^*(z)\left(\sum_{i=1}^{N} |\partial_z\theta_i|^2\right)^{1/2},\quad ~& t\in[\tau_e,\infty).\\
		\end{dcases}
	\end{aligned}
	\end{align}

	\vspace{0.2cm}

\noindent	$\bullet$ Case B  (Estimate of $\mathcal{I}_{22}$): By direct calculation, we have
\begin{align}
\begin{aligned} \label{C-3-19}
{\mathcal I}_{22} &= -\frac{\kappa}{N}\sum_{i,j =1}^{N} a(z)\cos\theta_i\cdot|\partial_z\theta_i|^2(1+\cos\theta_j)^z \\
&<
		\begin{dcases}
			-\kappa a(z)2^z\cos c\sum_{i=1}^{N} |\partial_z\theta_i|^2,\hspace{3.2cm} ~t\in(0, \tau_e),\quad\mbox{if}~~\cos c<0,\\
			-\kappa a(z)\cos c(1+\cos c)^z\sum_{i=1}^{N} |\partial_z\theta_i|^2,\hspace{1.7cm} ~t\in(0, \tau_e),\quad\mbox{if}~~\cos c\ge0,\\
			-\kappa a(z)\cos c^*(z)(1+\cos c^*(z))^z\sum_{i=1}^{N} |\partial_z\theta_i|^2,\quad t\in[\tau_e,\infty).
		\end{dcases}
\end{aligned}
\end{align}

	\vspace{0.2cm}

\noindent	$\bullet$ Case C (Estimate of $\mathcal{I}_{23}$): ~For $t\in(0, \tau_e)$, one has
	\begin{align*}
	\begin{aligned}
		|\mathcal{I}_{23}| &= \Big| \frac{\kappa}{N}\sum_{i,j =1}^{N} a(z)\sin\theta_i\cdot\partial_z\theta_i\cdot(1+\cos\theta_j)^z\ln(1+\cos\theta_j) \Big| \\
		&\leq \frac{\kappa a(z)}{N}\sum_{i=1}^{N} |\sin\theta_i\cdot\partial_z\theta_i |\cdot \sum_{j=1}^{N} |(1+\cos\theta_j)^z\ln(1+\cos\theta_j) | \\
		&\le\kappa a(z)2^z \ln 2 \sum_{i=1}^{N} |\sin\theta_i\cdot\partial_z\theta_i|  \\
		&\le\kappa a(z)2^z \ln 2 \left(\sum_{i=1}^{N} |\partial_z\theta_i|^2\right)^{1/2}\left(\sum_{i=1}^{N} \sin^2\theta_i\right)^{1/2} \\
		&\le\kappa a(z)2^z\sqrt{N}\ln2\cdot\left(\sum_{i=1}^{N} |\partial_z\theta_i|^2\right)^{1/2},
	\end{aligned}
	\end{align*}
where we used the following relation:
\[  \sum_{j=1}^{N} |(1+\cos\theta_j)^z\ln(1+\cos\theta_j) | \leq N 2^z \ln 2. \]
	Similarly, we obtain that for $t\in[\tau_e,\infty)$,
	\begin{align} \label{C-3-21}
		|\mathcal{I}_{23}| \le\kappa a(z) 2^z\sqrt{N}\ln 2\cdot\sin c^*(z)\left(\sum_{i=1}^{N} |\partial_z\theta_i|^2\right)^{1/2}.
	\end{align}

	\vspace{0.2cm}

\noindent	$\bullet$  Case D (Estimate of $\mathcal{I}_{24}$):  We use the Cauchy-Schwarz inequality to find
	\begin{align}
	\begin{aligned} \label{C-3-22}
		|\mathcal{I}_{24}| &\leq  \frac{a(z)z\kappa}{N}\sum_{i,j =1}^{N} \Big| \sin\theta_i\sin\theta_j\cdot\partial_z\theta_i\cdot\partial_z\theta_j\cdot(1+\cos\theta_j)^{z-1} \Big| \\
		&\le\frac{a(z)z\kappa}{N}\left(\sum_{i=1}^{N} |\partial_z\theta_i|^2\right)^{1/2}\left(\sum_{i=1}^{N} \sin^2\theta_i\right)^{1/2}\left(\sum_{j=1}^{N}|\partial_z\theta_j|^2\right)^{1/2} \\
		&\hspace{.4cm}\times \left(\sum_{j=1}^{N} \sin^2\theta_j(1+\cos\theta_j)^{2(z-1)}\right)^{1/2}.
	\end{aligned}
	\end{align}

	Note that for an even function $f(x)=\sin^2x(1+\cos x)^{2(z-1)}$, it has a local maximum at $x=\arccos\left(\frac{z-1}{z}\right)$:
	\begin{align}
	\begin{aligned} \label{C-3-23}
	&	f'(x)
		\begin{dcases}
			>0\quad\mbox{for}~~ x\in\left(0,\arccos\left(\frac{z-1}{z}\right)\right),\\
			<0\quad\mbox{for}~~ x\in\left(\arccos\left(\frac{z-1}{z}\right),\pi\right),
		\end{dcases}  \quad \mbox{and} \\
       & f\left(\arccos\left(\frac{z-1}{z}\right)\right)=\frac{1}{2z-1}\left(\frac{2z-1}{z}\right)^{2z}.
      \end{aligned}
      \end{align}
	Hence, it follows from \eqref{C-3-22}, \eqref{C-3-23} and $\beta(z)<\arccos\left(\frac{z-1}{z}\right)$ that
	\begin{align} \label{C-3-24}
		|\mathcal{I}_{24}| \le
		\begin{dcases}
			\frac{a(z)z\kappa}{\sqrt{2z-1}}\left(\frac{2x-1}{z}\right)^z\sum_{i=1}^{N} |\partial_z\theta_i|^2,\hspace{1.8cm}\mbox{for}~~t\in(0,\tau_e),\\
			a(z)z\kappa(1-\cos c^*(z))(1+\cos c^*(z))^z\sum_{i=1}^{N} |\partial_z\theta_i|^2,\quad\mbox{for}~~t\in[\tau_e,\infty).\\
		\end{dcases}
	\end{align}
	In \eqref{C-3-16}, we finally combine all the estimates \eqref{C-3-18}, \eqref{C-3-19}, \eqref{C-3-21} and \eqref{C-3-24} to derive the desired estimate.
	\end{proof}
\begin{remark}
	Since $z$ is a $L^\infty$-random variable, random coefficients $C_1$ and $C_2$ are integrable with respect to $\omega$, for each $0\le t<\infty$, i.e.,
	\[\int_\Omega C_1(t,z(\omega))d\omega<\infty \quad \mbox{and} \quad \int_\Omega C_2(t,z(\omega))d\omega<\infty,\quad\forall~t\ge0.\]
\end{remark}

One can easily figure out that the convergence of asymptotic value of $\|\partial_z\Theta(t)\|$ totally depends on the sign of $C_{1,2}(z)$. If the coefficient $C_{1,2}(z)$ is negative almost surely, i.e., $c^*(z)<\beta(z)$, then $|\partial_z\Theta(z,t)|$ goes to zero exponentially fast in time. Hence, this is crucial in uniform-in-time $H_z^1$-regularity.
\begin{proposition}\label{P3.2}
Suppose that the $\kappa$-assumption holds and $c^*(z)<\beta(z)$ almost surely, and let $\Theta(z,t)$ be a global solution process to \eqref{A-6}. Then, the following estimates hold. \newline
\begin{enumerate}
\item
If $C_{1,1}(z)\ne0$,
	\begin{align*}
		|\partial_z\Theta(t,z) | \le
		\begin{dcases}
			\frac{C_{2,1}(z)}{C_{1,1}(z)}\left(e^{tC_{1,1}(z)}-1\right),\quad\hspace{5cm}  &t\in(0,\tau_e),\\
			\left[ \frac{C_{2,1}(z)}{C_{1,1}(z)}\left(e^{\tau_eC_{1,1}(z)}-1\right)+\frac{C_{2,2}(z)}{C_{1,2}(z)}\right ] e^{C_{1,2}(z)(t-\tau_e)}-\frac{C_{2,2}(z)}{C_{1,2}(z)},~~& t\in[\tau_e,\infty).
		\end{dcases}
	\end{align*}
\item
If $C_{1,1}(z)=0$,
	\begin{align*}
		|\partial_z\Theta(t,z)|\le
		\begin{dcases}
			~tC_{2,1}(z),\hspace{6.9cm} &t\in(0,\tau_e),\\
			\left(\tau_eC_{2,1}(z)+\frac{C_{2,2}(z)}{C_{1,2}(z)}\right)e^{C_{1,2}(z)(t-\tau_e)}-\frac{C_{2,2}(z)}{C_{1,2}(z)},\quad &t\in[\tau_e,\infty).
		\end{dcases}
	\end{align*}
\end{enumerate}
\end{proposition}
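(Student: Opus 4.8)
The plan is to reduce the statement to a scalar linear differential inequality for the pointwise quantity $y(t):=|\partial_z\Theta(t,z)|$, at a fixed value of $z$ for which the $\kappa$-assumption holds, and then integrate it explicitly on the two intervals $(0,\tau_e)$ and $[\tau_e,\infty)$, gluing the two pieces by continuity at $t=\tau_e$. As preliminaries, Proposition \ref{P2.1} and Corollary \ref{C2.1} confine the (global) solution to ${\mathcal B}_c(0)$ for all $t\ge0$ and to ${\mathcal B}_{c^*(z)}(0)$ for $t\ge\tau_e$, and smoothness of the vector field in \eqref{A-6} gives differentiability of $z\mapsto\Theta(t,z)$ together with the variational system \eqref{C-3-15}. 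The estimates on ${\mathcal I}_{21},\dots,{\mathcal I}_{24}$ in the proof of Lemma \ref{L3.1} are all carried out pointwise in $z$, so in fact
\[
\partial_t|\partial_z\Theta(t,z)| \le C_1(t,z)\,|\partial_z\Theta(t,z)| + C_2(t,z),\qquad t>0,
\]
holds at each such $z$, with coefficients that are piecewise constant in $t$ by \eqref{C-3-14-1}: $(C_1,C_2)=(C_{1,1}(z),C_{2,1}(z))$ on $(0,\tau_e)$ and $(C_1,C_2)=(C_{1,2}(z),C_{2,2}(z))$ on $[\tau_e,\infty)$. Finally, since the local sensitivity analysis is run with deterministic initial data, $\partial_z\Theta(0,z)\equiv0$.

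On $(0,\tau_e)$ the inequality becomes $y'\le C_{1,1}(z)y+C_{2,1}(z)$ with $y(0)=0$ and constant coefficients, which I would integrate via the integrating factor $e^{-tC_{1,1}(z)}$: this yields $y(t)\le \frac{C_{2,1}(z)}{C_{1,1}(z)}\big(e^{tC_{1,1}(z)}-1\big)$ when $C_{1,1}(z)\ne0$, and $y(t)\le tC_{2,1}(z)$ when $C_{1,1}(z)=0$ (by direct integration, equivalently the $C_{1,1}\to0$ limit). These are exactly the first lines of cases (1) and (2). Evaluating at $t=\tau_e$ and using continuity of $t\mapsto|\partial_z\Theta(t,z)|$, the same expression with $t=\tau_e$ bounds $y(\tau_e)$.

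On $[\tau_e,\infty)$ the inequality is $y'\le C_{1,2}(z)y+C_{2,2}(z)$. Here the hypothesis $c^*(z)<\beta(z)$ is used: exactly as in Step II of the proof of Theorem \ref{T3.1}, $c^*(z)<\beta(z)=\arccos\!\big(\tfrac{z}{z+1}\big)$ forces $\tfrac{z+1}{z}\cos c^*(z)>1$, hence
\[
C_{1,2}(z)=\kappa z\,a(z)\,(1+\cos c^*(z))^z\Big(1-\tfrac{z+1}{z}\cos c^*(z)\Big)<0,
\]
in particular $C_{1,2}(z)\ne0$, so the integrating factor $e^{-C_{1,2}(z)(t-\tau_e)}$ on $[\tau_e,t]$ legitimately gives
\[
y(t)\le \Big(y(\tau_e)+\tfrac{C_{2,2}(z)}{C_{1,2}(z)}\Big)e^{C_{1,2}(z)(t-\tau_e)}-\tfrac{C_{2,2}(z)}{C_{1,2}(z)}.
\]
Substituting the bound for $y(\tau_e)$ obtained above produces the second lines of cases (1) and (2). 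A final sign check (using $C_{2,1}(z),C_{2,2}(z)\ge0$ and that $\frac{C_{2,1}}{C_{1,1}}(e^{tC_{1,1}}-1)\ge0$ whether $C_{1,1}\gtrless0$) confirms the bounds are nonnegative, as they must be.

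The argument carries no serious obstacle; it is a two-piece Grönwall computation. The only points that need attention are: (i) noticing that Lemma \ref{L3.1} actually holds pointwise in $z$, so the scalar ODE comparison applies for each $z$; (ii) correctly importing $c^*(z)<\beta(z)\Rightarrow C_{1,2}(z)<0$ from the proof of Theorem \ref{T3.1}, which both licenses dividing by $C_{1,2}(z)$ and makes the $[\tau_e,\infty)$-bound decay; and (iii) the continuity matching at $t=\tau_e$. The vanishing of the stated bounds at $t=0$ is precisely what requires deterministic initial data; with $z$-dependent data one would only pick up an additional $|\partial_z\Theta^{\mathrm{in}}(z)|\,e^{tC_{1,1}(z)}$ term.
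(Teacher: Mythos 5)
Your proposal is correct and is essentially the paper's own argument: the paper's proof of Proposition \ref{P3.2} consists precisely of invoking Lemma \ref{L3.1} and Gr\"onwall's lemma, and your write-up is that two-piece integration (with the continuity matching at $\tau_e$ and the sign of $C_{1,2}(z)$ from $c^*(z)<\beta(z)$) carried out in detail. Your observations that the differential inequality of Lemma \ref{L3.1} is really pointwise in $z$ and that the vanishing of the bounds at $t=0$ tacitly requires $\partial_z\Theta^{\mathrm{in}}(z)=0$ are both accurate readings of what the paper leaves implicit.
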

\begin{proof} We use the result in Lemma \ref{L3.1} and Gr\"onwall's Lemma.
\end{proof}
\begin{remark}
Note that Proposition \ref{P3.2} implies that $|\partial_z\Theta(t,z) |$ is finite for any finite time $t$.
\end{remark}
Now, we are ready to provide our first result on the propagation of regularity in random space.
\begin{theorem}
\emph{(Local sensitivity analysis)}  \label{T3.2}
Suppose that  the $\kappa$-assumption holds, and $c^*(z)<\beta(z)$ almost surely, and let $\Theta(z,t)$ be a global solution process to \eqref{A-6} {\color{black}with initial data $\Theta^{in}(z)\in\mathcal B_c(0)$, almost surely}. Then, there exists a constant $C>0$ such that
	\[\|\Theta(t)\|_{H^1_z}<C,\quad\forall~t\ge0.\]
\end{theorem}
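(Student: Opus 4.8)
The plan is to control the two pieces of $\|\Theta(t)\|_{H^1_z}^2 = \|\Theta(t)\|^2 + \|\partial_z\Theta(t)\|^2$ separately and then combine. The $L^2(\rho\,dz)$-part is easy: by Proposition~\ref{P2.1}, $\Theta(t,z)\in{\mathcal B}_c(0)$ almost surely for all $t\ge0$, so $|\Theta(t,z)|^2\le Nc^2$ pointwise in $z$ and $t$, whence $\|\Theta(t)\|^2\le Nc^2\int_{[1,\infty)}\rho(z)\,dz = Nc^2<\infty$ uniformly in time. So the whole content is the uniform bound on $\|\partial_z\Theta(t)\|$.

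For that I would integrate the pointwise bounds of Proposition~\ref{P3.2} against $\rho(z)\,dz$. On $[0,\tau_e)$ the bound there shows $|\partial_z\Theta(t,z)|$ is finite for finite time, and since $\tau_e<\infty$ (Corollary~\ref{C2.1}) and $z\in L^\infty(\Omega)$, the coefficients $C_{1,1}(z), C_{2,1}(z)$ are bounded almost surely, so $\sup_{t\in[0,\tau_e]}\|\partial_z\Theta(t)\|<\infty$. The key step is the regime $t\in[\tau_e,\infty)$: here the hypothesis $c^*(z)<\beta(z)$ almost surely forces, via \eqref{C-3-12-1}, that $\frac{z+1}{z}\cos c^*(z)>1$, hence $C_{1,2}(z)<0$ almost surely. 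Then from the second line of Proposition~\ref{P3.2}(1) (or (2)), using $e^{C_{1,2}(z)(t-\tau_e)}\le 1$ and $-C_{2,2}(z)/C_{1,2}(z)=C_{2,2}(z)/|C_{1,2}(z)|>0$, one gets for $t\ge\tau_e$ the $t$-independent pointwise estimate
\[
|\partial_z\Theta(t,z)| \le \left|\frac{C_{2,1}(z)}{C_{1,1}(z)}\left(e^{\tau_e C_{1,1}(z)}-1\right)\right| + 2\,\frac{C_{2,2}(z)}{|C_{1,2}(z)|} =: G(z)
\]
(and analogously $G(z):=\tau_e C_{2,1}(z) + 2 C_{2,2}(z)/|C_{1,2}(z)|$ in case $C_{1,1}(z)=0$). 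It remains to check $\int_{[1,\infty)} G(z)^2\rho(z)\,dz<\infty$. Since $z\in L^\infty(\Omega)$ there is a deterministic $z_{\max}$ with $z\le z_{\max}$ a.s.; $a(z), |a'(z)|, \tau_e, 2^z, \sin c^*(z)$ are all bounded on $[1,z_{\max}]$, so the numerators $C_{2,1}(z), C_{2,2}(z)$ are bounded there. The only subtlety is the denominator $|C_{1,2}(z)|$: one must rule out its vanishing, i.e. show $1-\frac{z+1}{z}\cos c^*(z)$ is bounded away from $0$ on the support of $\rho$. This follows because $c^*(z)<\beta(z)$ a.s.\ and $c^*$ depends continuously on $z$ and on the deterministic data $c$, so on a compact $z$-range the strict inequality is uniform; alternatively, if one only has the a.s.\ strict inequality, one invokes essential boundedness of $1/|C_{1,2}(z)|$ as part of the hypothesis packaged in ``$c^*(z)<\beta(z)$ almost surely'' together with $z\in L^\infty$. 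Either way $G\in L^2(\rho\,dz)$, giving $\sup_{t\ge\tau_e}\|\partial_z\Theta(t)\|<\infty$, and combining the two time regimes and the trivial $L^2$-bound yields a finite $C$ with $\|\Theta(t)\|_{H^1_z}<C$ for all $t\ge0$.

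The main obstacle is precisely the integrability of $1/|C_{1,2}(z)|^2$ against $\rho$: the hypothesis $c^*(z)<\beta(z)$ only gives negativity of $C_{1,2}(z)$ a.s., not a uniform gap, so one needs to argue that the combination of $z\in L^\infty(\Omega)$ (compact effective range of $z$) with the continuous dependence of $c^*(z)$ on $z$ and on the fixed deterministic $c$ upgrades this to an essential lower bound on $|C_{1,2}(z)|$. I expect the cleanest route is to make this uniform-gap observation explicit as a short lemma before invoking Grönwall, so that the final integration is routine.
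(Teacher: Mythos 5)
Your proof follows essentially the same route as the paper: the same splitting of $\|\Theta(t)\|_{H^1_z}^2$ into the $L^2$ and $\partial_z$ pieces, the pointwise bound $|\Theta(t,z)|^2\le Nc^2$ from Proposition \ref{P2.1} for the first piece, and integration of the pointwise estimates of Proposition \ref{P3.2} (using that $c^*(z)<\beta(z)$ forces $C_{1,2}(z)<0$ almost surely) for the second. Your extra care about the integrability of $C_{2,2}(z)^2/C_{1,2}(z)^2$ against $\rho$ is well placed: the paper simply asserts finiteness of the limiting integral without ruling out a degeneracy of $|C_{1,2}(z)|$ where $c^*(z)$ approaches $\beta(z)$, so the uniform-gap lemma you propose would actually tighten the published argument rather than merely reproduce it.
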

\begin{proof} By definition of $H_z^1$-norm, we get
	\begin{align}\label{T3.3P1}
		\begin{aligned}
			\|\Theta(t)\|_{H_z^1}^2&=\int_1^\infty |\Theta(t,z)|^2 \rho(z)dz+\int_1^\infty |\partial_z\Theta(t,z)|^2 \rho(z)dz\\
			&=\int_1^{\|z\|_{L^\infty(\Omega)}} |\Theta(t,z)|^2 \rho(z)dz+\int_1^{\|z\|_{L^\infty(\Omega)}} |\partial_z\Theta(t,z)|^2 \rho(z)dz \\
			&=: {\mathcal I}_{31} + {\mathcal I}_{32}.
		\end{aligned}
	\end{align}

\noindent $\bullet$~(Estimate of ${\mathcal I}_{31}$):~By Proposition \ref{P2.1}, one has
	\begin{align} \label{C-4}
		{\mathcal I}_{31} = \int_1^{\|z\|_{L^\infty(\Omega)}} |\Theta(z,t)|^2 \rho(z)dz\le\int_1^{\|z\|_{L^\infty(\Omega)}}Nc^2 \rho(z) dz=(\|z\|_{L^\infty(\Omega)} -1)Nc^2.
	\end{align}
	Note that the upper bound does not depend on time $t$. \newline

\noindent $\bullet$~(Estimate of ${\mathcal I}_{32}$):~We use Proposition \ref{P3.2} and the almost-sure negativity  of $C_{1,2}(z)$ (see \eqref{C-3-12-1} and \eqref{C-3-14-1}) to find
	\begin{align} \label{C-5}
	\lim_{t\to\infty} {\mathcal I}_{32} = \lim_{t\to\infty}\int_1^{ \|z\|_{L^\infty(\Omega)} } |\partial_z\Theta(t,z)|^2 \rho(z)dz=-\int_1^{\|z\|_{L^\infty(\Omega)}}\frac{C_{2,2}(z)}{C_{1,2}(z)}\rho(z)dz<\infty.
	\end{align}
Finally, we combine \eqref{C-4} and \eqref{C-5} to get the uniform boundedness of $\|\Theta(t)\|_{H^1_z}$ in time.
\end{proof}
\section{Death state for the random kinetic Winfree model} \label{sec:4}
\setcounter{equation}{0}
In this section, we study the existence of a measure-valued stationary death state for the random kinetic Winfree model when the natural frequency follows a uniform distribution or a Dirac distribution. Recall the Cauchy problem to the random kinetic Winfree model:
\begin{align}\label{E-0-0}
	\begin{cases}
		\displaystyle \partial_t f +\partial_\theta(f L[f]) = 0, \quad (t, \theta, \nu, z) \in  \bbr_+ \times \bbt \times \bbr \times [1,\infty),  \vspace{0.1cm} \\
		\displaystyle f \Big|_{t  = 0+} = f^{\mathrm{\mathrm{in}}} \geq 0,~~\quad \int_{\bbt\times\mathbb R} f^{\mathrm{\mathrm{in}}} (\theta, \nu{\color{black},z})g(\nu)d\nu d\theta = 1,{\color{black}\quad\mbox{almost surely,}}
	\end{cases}
\end{align}
where the alignment functional $L[f]$ is given as follows:
\begin{align}\label{E-0-1}
	\begin{cases}
		\displaystyle  L[f](t, \theta, \nu, z) = \nu-\sigma(t,z) \sin\theta, \quad  I(\theta, z) = a(z) (1+\cos\theta)^z, \vspace{0.1cm} \\
		\displaystyle  \sigma(t,z) = \kappa \int_{\bbt\times\bbr} I(\theta_*, z) f(t, \theta_*, \nu_*, z) g(\nu_*) d\theta_* d\nu_*.
	\end{cases}
\end{align}
\subsection{A measure-valued stationary death state} \label{sec:4.1}
In this subsection, we consider the existence of a measure-valued stationary death state of \eqref{E-0-0}, which is a stationary solution to \eqref{E-0-0} in the form of
\begin{align}\label{E-0-3}
	f(t,\theta,\nu,z)=\delta(\theta-\theta^*(\nu,z))\quad\mbox{for some}\quad\theta^*(\nu,z)\in\left[-\frac{\pi}{2},\frac{\pi}{2}\right].
\end{align}
Since the stationary solution does not depend on time $t$, we simply denote it as $f^\infty(\theta,\nu,z)$ in this subsection. First, we recall a measure-theoretic formulation for stationary solution $f^{\infty}$ to \eqref{E-0-0}:
\begin{align}\label{E-1}
\begin{dcases}
\int_{\bbt \times \bbr \times [1, \infty)}\Big(  (f^{\infty}L[f^{\infty}]) \partial_\theta \varphi \Big)g(\nu)\rho(z)d \theta d\nu dz = 0, \\
L[f^{\infty}](\theta, \nu, z) = \nu- {\tilde \sigma}(z) \sin\theta, \\
{\tilde \sigma}(z) = \kappa \int_{\bbt\times\bbr} I(\theta_*, z) f^{\infty}(\theta_*, \nu_*, z) g(\nu_*) d\theta_* d\nu_*,
\end{dcases}
\end{align}
where $\varphi = \varphi(\theta, \nu, z)$ be a $C^1_0(\bbt \times \bbr \times [1, \infty))$ test function. In $\eqref{E-1}_3$, we replace $f^\infty$ by the form in \eqref{E-0-3} and to find
\begin{align}\label{E-2}
	\tilde\sigma(z)=\kappa a(z)\int_{\mathbb R}\big(1+\cos\theta^*(\nu_*,z)\big)^zg(\nu_*)d\theta_*d\nu_*.
\end{align}
Since $\theta^*(\nu,z)\in[-\pi/2,\pi/2]$, we have $\tilde\sigma(z)>0$ for any $z$. Now, let $L[f^\infty](\theta^*(\nu,z),\nu,z)=0$, i.e.,
\begin{align*}
	\nu=\tilde\sigma(z)\sin\theta^*(\nu,z)\quad\iff\quad\sin\theta^*(\nu,z)=\frac{\nu}{\tilde\sigma(z)},
\end{align*}
which leads to
\[\cos\theta^*(\nu,z)=\sqrt{1-\frac{\nu^2}{\tilde\sigma(z)^2}}.\]
Then, we can rewrite \eqref{E-2} as
\[\tilde\sigma(z)=\kappa a(z)\int_{\mathbb R}\left( 1+\sqrt{1-\nu_*^2/{\tilde \sigma}(z)^2} \right)^z g(\nu_*) d\nu_*.\]
In the following proposition, we study the meaning of this equation.
\begin{proposition} \label{P4.1}
Let ${\tilde \sigma}: [1, \infty) \to \bbr_+$ be a solution to the following integral equation:
\begin{align} \label{E-3}
{\tilde \sigma}(z) = \kappa a(z) \int_{\mathbb R} \left( 1+\sqrt{1-\nu_*^2/{\tilde \sigma}(z)^2} \right)^z g(\nu_*) d\nu_*.
\end{align}
Then, the state $f^{\infty}(\theta, \nu, z) = \delta(\theta-\theta^*(\nu,z))$ with $\sin\theta^*(\nu,z) = \frac{\nu}{{\tilde \sigma}(z)}$ is a measure-valued stationary death state of \eqref{E-0-0}.
\end{proposition}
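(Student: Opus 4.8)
The plan is to verify directly that the measure-valued function $f^{\infty}(\theta,\nu,z)=\delta(\theta-\theta^*(\nu,z))$ with $\sin\theta^*(\nu,z)=\nu/\tilde\sigma(z)$ satisfies the weak stationary formulation \eqref{E-1}, which amounts to checking two things: the transport identity (the first line of \eqref{E-1}) and the self-consistency of the coupling coefficient (the relation defining $\tilde\sigma$ in \eqref{E-1}). Throughout, I would read \eqref{E-1} rigorously as a statement about the pushforward of the measure $g(\nu)\rho(z)\,d\nu\,dz$ under the map $(\nu,z)\mapsto(\theta^*(\nu,z),\nu,z)$, so that all manipulations with the Dirac mass are legitimate.

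\emph{Step 1 (transport identity).} The key elementary observation is that $f^{\infty}$ concentrates precisely on the zero set of the transport velocity $L[f^{\infty}]$. Indeed, since $\theta^*(\nu,z)\in[-\pi/2,\pi/2]$ and $\sin\theta^*(\nu,z)=\nu/\tilde\sigma(z)$, one has
\[
L[f^{\infty}]\big(\theta^*(\nu,z),\nu,z\big)=\nu-\tilde\sigma(z)\sin\theta^*(\nu,z)=\nu-\tilde\sigma(z)\cdot\frac{\nu}{\tilde\sigma(z)}=0 .
\]
Hence, for any test function $\varphi\in C^1_0(\bbt\times\bbr\times[1,\infty))$, the sifting property of $\delta(\theta-\theta^*(\nu,z))$ in the $\theta$-integral gives
\[
\int_{\bbt}\big(f^{\infty}L[f^{\infty}]\big)\partial_\theta\varphi\,d\theta = L[f^{\infty}]\big(\theta^*(\nu,z),\nu,z\big)\,\partial_\theta\varphi\big(\theta^*(\nu,z),\nu,z\big)=0
\]
pointwise in $(\nu,z)$, so integrating against $g(\nu)\rho(z)$ yields the first equation of \eqref{E-1}. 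This is simultaneously the statement that the configuration is a death state: every oscillator in the support of $f^{\infty}$ sits at a fixed point of its (generalized) Adler dynamics, so all rotation numbers vanish (cf.\ Definition \ref{D2.1}).

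\emph{Step 2 (self-consistency and normalization).} Substituting $f^{\infty}(\theta_*,\nu_*,z)=\delta(\theta_*-\theta^*(\nu_*,z))$ and $I(\theta_*,z)=a(z)(1+\cos\theta_*)^z$ into the definition of $\tilde\sigma$ in \eqref{E-1}, and using $\cos\theta^*(\nu_*,z)=\sqrt{1-\nu_*^2/\tilde\sigma(z)^2}$ (valid because $\theta^*\in[-\pi/2,\pi/2]$), one gets
\[
\kappa\int_{\bbt\times\bbr}I(\theta_*,z)f^{\infty}(\theta_*,\nu_*,z)g(\nu_*)\,d\theta_*\,d\nu_*
=\kappa a(z)\int_{\bbr}\Big(1+\sqrt{1-\nu_*^2/\tilde\sigma(z)^2}\Big)^{z}g(\nu_*)\,d\nu_* ,
\]
which is exactly the right-hand side of the integral equation \eqref{E-3}; by hypothesis this equals $\tilde\sigma(z)$, so the self-consistency relation holds. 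Finally, $\int_{\bbt\times\bbr}f^{\infty}g(\nu)\,d\nu\,d\theta=\int_{\bbr}g(\nu)\,d\nu=1$, so $f^{\infty}$ is an admissible (probability) state, which completes the verification.

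The computation itself is short; the genuinely delicate points — which I would make explicit — concern the well-definedness and measurability of the map $\nu\mapsto\theta^*(\nu,z)$. This requires $\tilde\sigma(z)>0$ (immediate from \eqref{E-3}, since the integrand there is strictly positive) and $\mathrm{supp}\,g\subseteq[-\tilde\sigma(z),\tilde\sigma(z)]$ for a.e.\ $z$, so that $\theta^*(\nu,z)=\arcsin(\nu/\tilde\sigma(z))$ is defined on the support of $g$; this last constraint is exactly why the existence discussion in the remainder of the section specializes to frequency distributions $g$ that are sufficiently concentrated (the uniform and Dirac cases). I would also remark that the choice $\theta^*\in[-\pi/2,\pi/2]$ selects the \emph{stable} branch of equilibria of $\dot\theta=\nu-\tilde\sigma(z)\sin\theta$ (where $\cos\theta^*\ge 0$), which is the physically relevant one, although for the purpose of Proposition \ref{P4.1} any branch with $L[f^\infty]=0$ on its graph still yields a weak stationary solution.
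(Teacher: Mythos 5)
Your proposal is correct and follows essentially the same route as the paper: the paper first verifies the self-consistency relation (your Step 2, via the sifting property and $\cos\theta^*=\sqrt{1-\nu^2/\tilde\sigma(z)^2}$) and then checks the weak transport identity by writing $L[f^{\infty}]=\tilde\sigma(z)(\sin\theta^*(\nu,z)-\sin\theta)$, which vanishes on the support of the Dirac mass (your Step 1). Your additional remarks on $\tilde\sigma(z)>0$, the support condition $\mathrm{supp}\,g\subseteq[-\tilde\sigma(z),\tilde\sigma(z)]$, and the choice of the stable branch are sound refinements but do not alter the argument.
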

\begin{proof}  Let $\tilde{\sigma}$ be a solution satisfying the integral equation \eqref{E-3}, and we define $\theta^*(\nu, z)$ to satisfy the following relation:
\[ \sin\theta^*(\nu,z) = \frac{\nu}{{\tilde \sigma}(z)}. \]
We claim that
\begin{align} \label{E-3-1}
f^{\infty} = \delta(\theta-\theta^*(\nu,z))~\mbox{is a measure-valued state of \eqref{E-0-0}},\quad\mbox{i.e.,}
 \end{align}
 \begin{align} \label{E-4}
  \int_{\bbt \times \bbr \times [1, \infty)} \Big(  (f^{\infty}L[f^{\infty}]) \partial_\theta \varphi \Big)g(\nu)\rho(z) d \theta d\nu dz = 0.
  \end{align}
 {\it Proof of \eqref{E-4}}: By direct calculation, it is easy to see that
\begin{align}
\begin{aligned} \label{E-5}
& \kappa \int_{\bbt\times\bbr} I(\theta_*, z) f^{\infty}(\theta_*, \nu_*, z) g(\nu_*) d\theta_* d\nu_* \\
&  \hspace{0.5cm} = \kappa \int_{\bbt\times\bbr} I(\theta_*, z)   \delta(\theta_*-\theta^*(\nu_*,z)) g(\nu_*) d\theta_*  d\nu_*  =  \kappa \int_{\bbr} I(\theta^*(\nu_*, z), z)  g(\nu_*) d\nu_*  \\
&  \hspace{0.5cm} =  \kappa a(z) \int_{\bbr} (1 + \cos \theta^*(\nu_*, z))^z g(\nu_*) d\nu_* =  \kappa a(z) \int_{\bbr}  \Big(1 + \sqrt{1- \sin^2(\theta^*(\nu_*, z))} \Big)^z g(\nu_*) d\nu_* \\
 & \hspace{0.5cm} = \kappa a(z) \int_{\mathbb R} \left( 1+\sqrt{1-\nu_*^2/{\tilde \sigma}(z)^2} \right)^z g(\nu_*) d\nu_* = {\tilde \sigma}(z).
 \end{aligned}
\end{align}
By $\eqref{E-1}_2$ and \eqref{E-5}, we have
\begin{align} \label{E-6}
 L[f^{\infty}](\theta, \nu, z) = \nu- \tilde{\sigma}(z) \sin\theta = \tilde{\sigma}(z) (\sin \theta^*(\nu, z) - \sin \theta).
\end{align}
Now, we use \eqref{E-3-1} and \eqref{E-6} to see
\begin{align*}
\begin{aligned}
& \int_{\bbt \times \bbr \times [1, \infty)} \Big(  (f^{\infty}L[f^{\infty}]) \partial_\theta \varphi \Big) d \theta d\nu dz  \\
&\hspace{0.5cm} =  \int_{\bbt \times \bbr \times [1, \infty)}  \tilde{\sigma}(z) (\sin \theta^*(\nu, z) - \sin \theta) \delta(\theta-\theta^*(\nu,z)) \partial_\theta \varphi(\theta, \nu, z) d \theta d\nu dz = 0.
\end{aligned}
\end{align*}
\end{proof}
\begin{remark} It follows from Proposition \ref{P4.1} that the existence of measure-valued stationary death state depends on the solvability of \eqref{E-3}. In the next two subsections, we show that the equation \eqref{E-3} with respect to $\tilde\sigma(z)$ has one or two solutions, when the distribution for natural frequency is given as a uniform or Dirac distribution. Any choice between them exhibits a measure-valued stationary death state in the form presented in Proposition \ref{P4.1}.
\end{remark}

In what follows, we investigate the solvability of \eqref{E-3} for two explicit distribution functions $g(\nu)$ for natural frequency.
\subsection{Uniform distribution}  \label{sec:4.2}
Consider a uniform distribution $g$ defined by
\begin{align}\label{E-7}
g(\nu) = \begin{dcases}
\frac{1}{2\gamma}, & \nu\in[1-\gamma, 1+\gamma], \\
0, & \mbox{otherwise},
\end{dcases}
\end{align}
where $\gamma \in (0,1)$.  Then, the relation \eqref{E-3} can be rewritten as
\begin{align}\label{E-8}
\frac{1}{\kappa} = \frac{1}{x} \frac{a(z)}{2\gamma} \int_{1-\gamma}^{1+\gamma} \left( 1+\sqrt{1-\nu_*^2/x^2 } \right)^z d\nu_* =: {\mathcal F}(x, \gamma, z),
\end{align}
for $ (x, \gamma, z) \in [1 + \gamma, \infty) \times (0, 1) \times [1, \infty).$

\subsubsection{Preparatory lemmas} \label{sec:5.2.1} In this part, we provide several lemmas to be used in the solvability of \eqref{E-8}.  Note that the function ${\mathcal F}$ is well-defined for $x \geq 1 + \gamma$. This is due to the following observation:
\[ 1- \frac{\nu_*^2}{x^2} \geq 0, \quad \forall~\nu_* \in [1-\gamma, 1 + \gamma] \quad \mbox{and} \quad  x > 0 \quad \iff \quad  x \geq 1 + \gamma.  \]
\begin{lemma} \label{L4.1}
The equation \eqref{E-8} is solvable if and only if
\begin{align} \label{E-9}
 \max_{x\geq1+\gamma} {\mathcal F}(x, \gamma, z) \geq \frac{1}{\kappa},  \quad \mbox{for}~(\gamma, z) \in (0,1) \times [1, \infty).
 \end{align}
\end{lemma}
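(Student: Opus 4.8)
The plan is to prove the equivalence by a continuity-and-intermediate-value argument, so the key is to understand the shape of the map $x \mapsto {\mathcal F}(x, \gamma, z)$ on $[1+\gamma, \infty)$ for fixed $(\gamma, z)$. First I would record that the integrand $\left(1 + \sqrt{1 - \nu_*^2/x^2}\right)^z$ is continuous in $x$ on $[1+\gamma, \infty)$ (the square root is well-defined precisely there, as noted just before the statement), and is dominated uniformly by $2^z$; hence ${\mathcal F}(\cdot, \gamma, z)$ is continuous on $[1+\gamma, \infty)$ by dominated convergence, and in fact ${\mathcal F}(1+\gamma, \gamma, z) = \frac{a(z)}{(1+\gamma)} \cdot \frac{1}{2\gamma}\int_{1-\gamma}^{1+\gamma}\left(1+\sqrt{1-\nu_*^2/(1+\gamma)^2}\right)^z d\nu_* $ is a finite positive number.

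Next I would analyze the two ends of the interval. As $x \to \infty$, the prefactor $1/x \to 0$ while the integral stays bounded between $(2\gamma)\cdot 1$ and $(2\gamma)\cdot 2^z$ after dividing by $2\gamma$, so ${\mathcal F}(x,\gamma,z) \to 0$. Therefore the continuous function ${\mathcal F}(\cdot,\gamma,z)$ attains its supremum on $[1+\gamma,\infty)$ at some finite point — i.e. $\max_{x \ge 1+\gamma}{\mathcal F}(x,\gamma,z)$ exists — and its range is exactly the interval $\big(0, \max_{x\ge 1+\gamma}{\mathcal F}(x,\gamma,z)\big]$, using that ${\mathcal F}>0$ everywhere on the domain and ${\mathcal F}\to 0$ at infinity, together with the intermediate value theorem. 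Given this, \eqref{E-8} — namely ${\mathcal F}(x,\gamma,z) = 1/\kappa$ — has a solution $x \in [1+\gamma,\infty)$ if and only if $1/\kappa$ lies in the range of ${\mathcal F}(\cdot,\gamma,z)$, which, since $1/\kappa > 0$, is equivalent to $1/\kappa \le \max_{x \ge 1+\gamma}{\mathcal F}(x,\gamma,z)$. This is precisely \eqref{E-9}.

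The only delicate point — and the one I would expect to need the most care — is justifying that the supremum of ${\mathcal F}(\cdot,\gamma,z)$ is actually attained (so that ``$\ge$'' rather than ``$>$'' is the correct inequality in \eqref{E-9}): this follows because ${\mathcal F}$ is continuous on $[1+\gamma,\infty)$, positive, and tends to $0$ at infinity, hence on a sufficiently large compact interval $[1+\gamma, R]$ it attains a maximum which dominates its values on $(R,\infty)$. Everything else is routine: continuity via dominated convergence, the uniform bound $1 \le 1+\sqrt{1-\nu_*^2/x^2} \le 2$ on the relevant range, and a single application of the intermediate value theorem. I would also remark in passing that ${\mathcal F}$ need not be monotone, which is why the statement is phrased with the maximum rather than with a boundary value; the monotonicity or unimodality needed to count solutions (one versus two) will be addressed in the subsequent lemmas, not here.
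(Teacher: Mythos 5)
Your proposal is correct and follows essentially the same route as the paper's proof: continuity of $x \mapsto \mathcal{F}(x,\gamma,z)$, the decay $\mathcal{F}(x,\gamma,z) \to 0$ as $x \to \infty$, and a single application of the intermediate value theorem to get the backward implication (the forward one being immediate). Your extra care in justifying that the supremum is attained is a small refinement the paper leaves implicit, but it does not change the argument.
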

\begin{proof}
Let $(\gamma, z) \in (0,1) \times [1, \infty)$ be fixed. Then, the map $x~\mapsto~{\mathcal F}(x, \gamma, z)$ is continuous, and it satisfies
\[ \lim_{x \to \infty} {\mathcal F}(x, \gamma, z) = 0. \]

\noindent (i)~($\Longrightarrow$ direction):~Suppose that the equation \eqref{E-8} is solvable, i.e., there exists ${\hat x} \geq 1 + \gamma$ such that
\[ \frac{1}{\kappa}  = {\mathcal F}({\hat x}, \gamma, z). \]
Hence
\[    \max_{x\geq1+\gamma}\mathcal F(x, \gamma, z)  \geq  {\mathcal F}(\hat{x}, \gamma, z) = \frac{1}{\kappa}.   \]

\vspace{0.2cm}

\noindent (ii)~($\Longleftarrow$ direction):~Suppose that the following relation holds:
\[  \max_{x\geq1+\gamma}\mathcal F(x, \gamma, z) \geq \frac{1}{\kappa}. \]
Then by the intermediate value theorem , there exists a ${\tilde \sigma}(\gamma, z) \geq 1 + \gamma$ such that
\[
\frac{1}{\kappa} = {\mathcal F}({\tilde \sigma}(\gamma, z), \gamma, z),
\]
i.e., the equation \eqref{E-8} is solvable.
\end{proof}
\begin{remark}
By combining the result of Proposition \ref{P4.1} and Lemma \ref{L4.1}, the existence of measure-valued stationary death state depends on the fact that whether ${\mathcal F}$ satisfies the relation \eqref{E-9} or not.
\end{remark}

\subsubsection{Piecewise monotonicity of ${\mathcal F}$} \label{sec:4.2.2}
In this part, we study the piecewise monotonicity of the differentiable function ${\mathcal F}(\cdot, \gamma, z)$. Recall that our aim is to show to study maximum of ${\mathcal F}(\cdot, \gamma, z)$ for fixed $(\gamma, z) \in (0,1) \times [1, \infty)$. \newline

For this, we study the piecewise monotonic behavior of ${\mathcal F}$ by taking a partial derivative of $F$ with respect to $x$:
\begin{align}
\begin{aligned} \label{E-10}
\partial_x {\mathcal F}(x, \gamma, z) &= \frac{a(z)}{2\gamma} \int_{1-\gamma}^{1+\gamma} \frac{\partial}{\partial x} \left[ \frac{1}{x} \left( 1+\sqrt{1-\nu^2/x^2 } \right)^{z} \right] d\nu \\
&= -\frac{a(z)}{2\gamma} \frac{1}{x^2} \int_{1-\gamma}^{1+\gamma} \left( 1+\sqrt{1-\nu^2/x^2 } \right)^{z-1} \left[ 1+\sqrt{1-\nu^2/x^2 } -\frac{z \nu^2/x^2}{\sqrt{1-\nu^2/x^2}} \right] d\nu \\
& = -\frac{a(z)}{2\gamma} \frac{1}{x} \int_{\frac{1-\gamma}{x}}^{\frac{1+\gamma}{x}} \left( 1+\sqrt{1-y^2 } \right)^{z-1} \left[ 1+\sqrt{1-y^2} -\frac{z y^2}{\sqrt{1-y^2}} \right] dy \\
& = -\frac{a(z)}{2\gamma} \frac{1}{x} \int_{\frac{1-\gamma}{x}}^{\frac{1+\gamma}{x}} \frac{\partial}{\partial y} \left[ y \left( 1+\sqrt{1-y^2} \right)^{z} \right] dy \\
& = -\frac{a(z)}{2\gamma} \frac{1}{x} \left[ H\left(\frac{1+\gamma}{x}, z \right) -H\left(\frac{1-\gamma}{x}, z \right) \right],
\end{aligned}
\end{align}
where the function $H$ is define by
\begin{align} \label{E-11}
H(y, z) := y \left( 1+\sqrt{1-y^2} \right)^{z}, \quad \forall~(y, z) \in [0,1] \times [1, \infty).
\end{align}
For the graphical representation of $H$, we refer to Figure \ref{Fig_H} in which for each $z \in [1, 5]$, the maximum value $\displaystyle  \max_{0 \leq y \leq 1} H(y,z)$  is depicted in yellow curve.
\begin{figure}
	\centering
	\mbox{\includegraphics[width=0.5\textwidth]{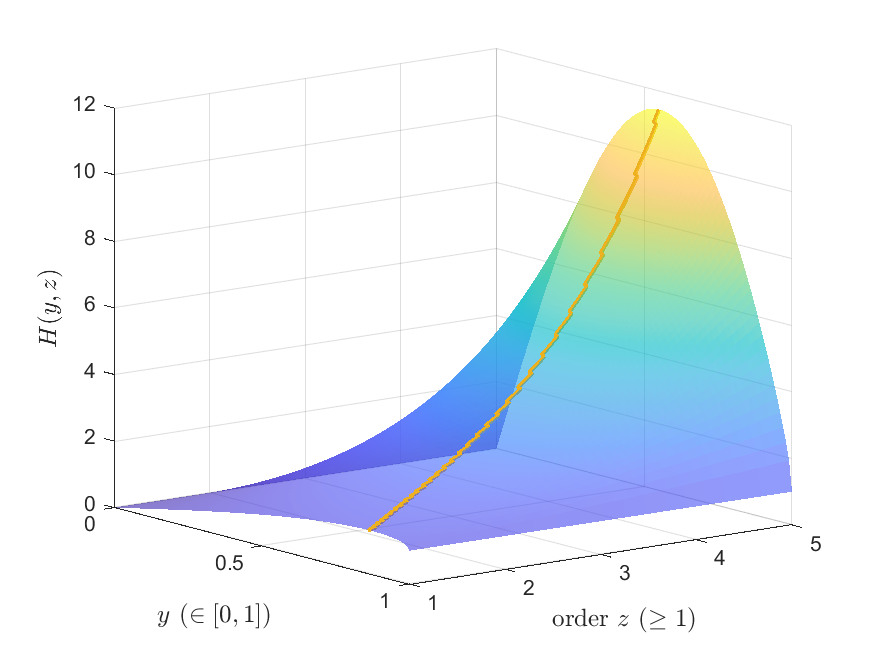}
		\hspace{-.0cm}
		\includegraphics[width=0.5\textwidth]{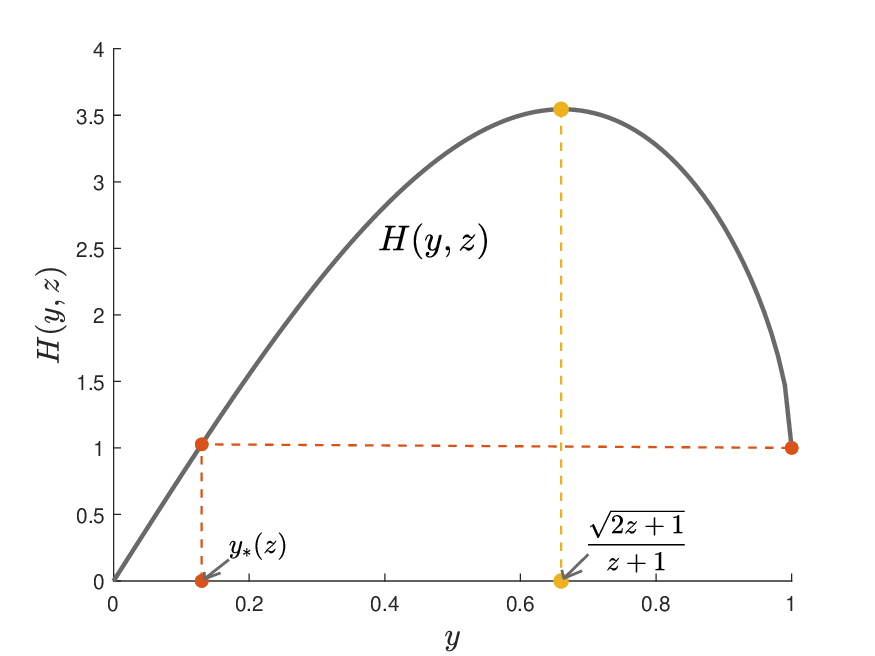}
	}
	\caption{(Left) Surface of $H(u,z)$ and the maximum value for each $z$ in the yellow line. (Right) Graph of $H(u,z)$ when $z=3$.}
	\label{Fig_H}
\end{figure}
In the following lemma, we study the piecewise monotonicity of $H(\cdot, z)$.
\begin{figure}
	\centering
	\mbox{\includegraphics[width=0.5\textwidth]{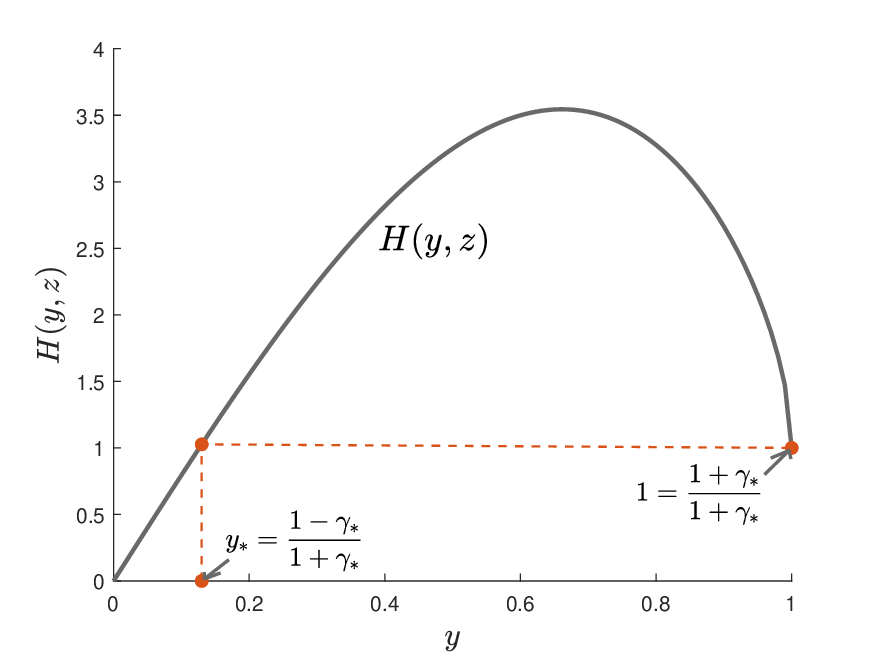}
		\hspace{-.0cm}
		\includegraphics[width=0.5\textwidth]{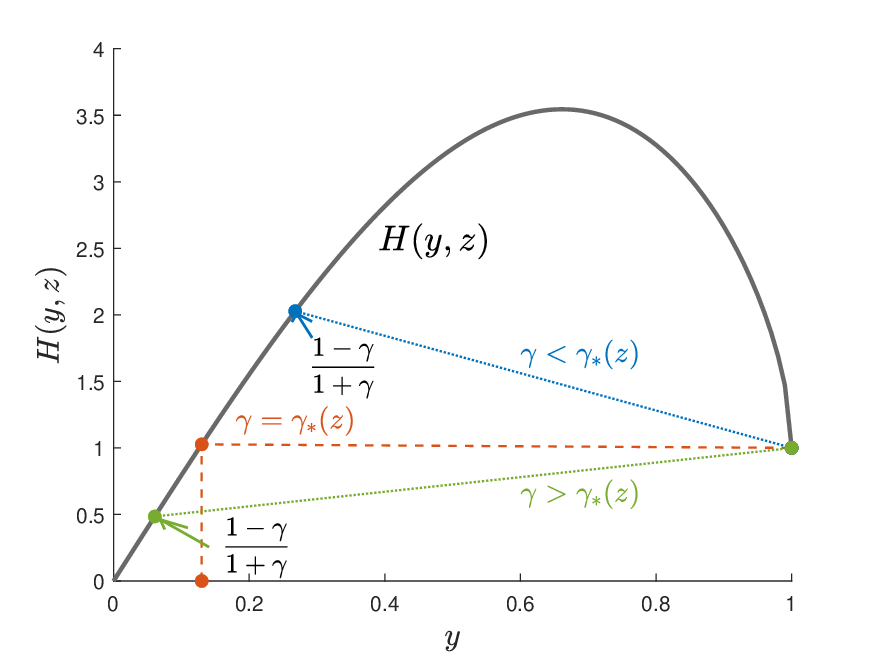}
	}
	\caption{(Left) Schematic diagram of $u_*$ and $\gamma_*$. (Right) Relation between $H(1)$ and $H\left(\frac{1-\gamma}{1+\gamma}\right)$ according to $\gamma$.}
	\label{Fig_H_line}
\end{figure}
\begin{lemma} \label{L4.2}
\begin{enumerate}
\item
The function $H$ is nonnegative and
\[
H(0,z) = 0, \quad H(1,z) = 1, \quad \forall~z \in [1, \infty).
\]
\item
For $z \in [1, \infty)$, the map $y~\mapsto~H(y, z)$ is strictly increasing and strictly decreasing  on the intervals $\displaystyle \left[ 0, \frac{\sqrt{2z+1}}{z+1} \right)$ and $\displaystyle \left(\frac{\sqrt{2z+1}}{z+1}, 1 \right)$, respectively,
\end{enumerate}
\end{lemma}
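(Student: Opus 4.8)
The plan is to verify part (1) by direct substitution into the explicit formula \eqref{E-11}, and to establish part (2) by differentiating $H(\cdot, z)$ and locating the unique critical point in $(0,1)$. For part (1), plugging in $y = 0$ gives $H(0,z) = 0 \cdot (1 + 1)^z = 0$, and plugging in $y = 1$ gives $H(1,z) = 1 \cdot (1 + 0)^z = 1$, independently of $z$; nonnegativity is immediate since both $y \geq 0$ and $1 + \sqrt{1-y^2} \geq 0$ on $[0,1]$.

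For part (2), I would compute $\partial_y H(y,z)$ using the product and chain rules. Writing $w = w(y) := \sqrt{1-y^2}$, so that $w' = -y/w$, one gets
\[
\partial_y H(y,z) = (1+w)^z + y \cdot z (1+w)^{z-1} \cdot \left(-\frac{y}{w}\right) = (1+w)^{z-1}\left[ (1+w) - \frac{z y^2}{w} \right].
\]
Since $(1+w)^{z-1} > 0$ for $y \in (0,1)$, the sign of $\partial_y H$ is governed by the bracket $\Phi(y) := (1+w) - \frac{zy^2}{w}$. Multiplying through by $w > 0$, this has the same sign as $\Psi(y) := w(1+w) - zy^2 = w + w^2 - zy^2 = w + (1-y^2) - zy^2 = \sqrt{1-y^2} + 1 - (z+1)y^2$. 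Now $\Psi$ is continuous and strictly decreasing on $(0,1)$ (since $\sqrt{1-y^2}$ is decreasing and $-(z+1)y^2$ is decreasing), with $\Psi(0) = 2 > 0$ and $\Psi(1) = -z < 0$, so it has a unique zero $y_*(z) \in (0,1)$, with $\Psi > 0$ on $[0, y_*)$ and $\Psi < 0$ on $(y_*, 1)$. Solving $\Psi(y_*) = 0$: set $u = y_*^2$, then $\sqrt{1-u} = (z+1)u - 1$; squaring gives $1 - u = (z+1)^2 u^2 - 2(z+1)u + 1$, i.e. $(z+1)^2 u^2 - (2z+1)u = 0$, hence $u = \frac{2z+1}{(z+1)^2}$ and $y_*(z) = \frac{\sqrt{2z+1}}{z+1}$. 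One should double-check that this root of the squared equation actually satisfies $\sqrt{1-u} = (z+1)u - 1 \geq 0$, i.e. that $(z+1)u = \frac{2z+1}{z+1} \geq 1$, which holds since $2z+1 \geq z+1$ for $z \geq 1$ (indeed for $z \geq 0$); and that $y_*(z) < 1$, i.e. $2z+1 < (z+1)^2 = z^2 + 2z + 1$, i.e. $0 < z^2$, which holds. Therefore $H(\cdot,z)$ is strictly increasing on $\left[0, \frac{\sqrt{2z+1}}{z+1}\right)$ and strictly decreasing on $\left(\frac{\sqrt{2z+1}}{z+1}, 1\right)$, as claimed.

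The computation is elementary throughout; the only point requiring genuine care is the squaring step, since squaring can introduce spurious roots — one must confirm the sign condition $(z+1)y_*^2 - 1 \geq 0$ so that the produced value is the actual zero of $\Psi$ and not an artifact. A clean alternative that sidesteps squaring entirely is to observe directly that $\Psi(y) = \sqrt{1-y^2} + 1 - (z+1)y^2$ is a strictly decreasing bijection from $(0,1)$ onto $(-z, 2)$, deduce existence and uniqueness of $y_*$ abstractly, and then simply \emph{verify} by substitution that $y = \frac{\sqrt{2z+1}}{z+1}$ makes $\Psi$ vanish (using $1 - y^2 = \frac{z^2}{(z+1)^2}$ so $\sqrt{1-y^2} = \frac{z}{z+1}$, and $1 - (z+1)y^2 = 1 - \frac{2z+1}{z+1} = -\frac{z}{z+1}$, which sum to zero). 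I would adopt this verification-based route in the write-up, as it is the least error-prone.
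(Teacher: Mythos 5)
Your proof is correct and follows essentially the same route as the paper: both differentiate $H(\cdot,z)$, reduce the sign of $\partial_y H$ to that of $\sqrt{1-y^2}+1-y^2-zy^2$, and identify the unique zero $y=\frac{\sqrt{2z+1}}{z+1}$ in $(0,1)$. You merely fill in details the paper leaves implicit (strict monotonicity of the bracket and the check that squaring introduces no spurious root), which is a welcome but not substantively different addition.
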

\begin{proof}
(i)~The first assertion is obvious. \newline

\noindent (ii)~By direct calculation, it is easy to see that for $y\in(0,1)$,
\begin{align*}
	\partial_yH(y,z)&=y\left(1+\sqrt{1-y^2}\right)^z\left(1-\frac{zy^2}{\sqrt{1-y^2}+1-y^2}\right)\frac{1}{y}\\
	&=H(y,z)\frac{\sqrt{1-y^2}+1-y^2-zy^2}{y\left(\sqrt{1-y^2}+1-y^2\right)},
\end{align*}
and
\[  \frac{\sqrt{1-y^2} \left(1 + \sqrt{1-y^2}\right) - y^2z }{y \sqrt{1-y^2} \left(1 + \sqrt{1-y^2}\right)}  = 0 \quad \mbox{and} \quad y \in (0, 1) \quad \Longleftrightarrow \quad y = \frac{\sqrt{2z+1}}{z+1}. \]
\end{proof}
Before we study the maximum point of $\mathcal F(x,\gamma,z)$ for each $(\gamma,z)$ on $[1+\gamma,\infty)$ in detail, note that the sign of $(\partial_x\mathcal F)(\cdot,\gamma,z)$ only depends on
\[H\left(\frac{1+\gamma}{x},z\right)-H\left(\frac{1-\gamma}{x},z\right).\]
Note that this is positive for sufficiently large $x\gg1+\gamma$ (see the right one in Figure \ref{Fig_H}). In particular, at $x=1+\gamma$, we have
\begin{align}\label{E-11-1}
	(\partial_x\mathcal F)(1+\gamma,\gamma,z)&=-\frac{a(z)}{2\gamma}\frac{1}{1+\gamma}\left[H(1,z)-H\left(\frac{1-\gamma}{1+\gamma},z\right)\right].
\end{align}

\vspace{.2cm}

\indent On the other hand, for each $z$, we can find $y_*(z)\in(0,1)$ such that (see the left one in Figure \ref{Fig_H_line})
\begin{align}\label{y_star}
	H(1,z)=H\left(y_*(z),z\right),
\end{align}
and define $\gamma_*(z)\in(0,1)$ such that
\[y_*(z)=\frac{1-\gamma_*(z)}{1+\gamma_*(z)},\quad\mbox{i.e.,}\quad\gamma_*(z):=\frac{1-y_*(z)}{1+y_*(z)}.\]
Since $\gamma\mapsto\frac{1-\gamma}{1+\gamma}$ is monotonically decreasing, there are three cases for \eqref{E-11-1} as follows (see the right one in Figure \ref{Fig_H_line}):
\[(i)~\gamma<\gamma_*(z),\quad(ii)~\gamma=\gamma_*(z),\quad(iii)~\gamma>\gamma_*(z).\]
In other words, this is equivalent to
\[(i)~(\partial_x\mathcal F)(1+\gamma,\gamma,z)>0,\quad(ii)~(\partial_x\mathcal F)(1+\gamma,\gamma,z)=0,\quad(iii)~(\partial_x\mathcal F)(1+\gamma,\gamma,z)<0.\]
In the following lemma, we will show that
\begin{align*}
	(i)~&:~\mathcal F \mbox{ increases until }\tilde x~~\mbox{and then decreases, for some }\tilde x>1+\gamma,\\
	(ii),(iii)~&:~\mathcal F\mbox{ decreases.}
\end{align*}
\begin{lemma} \label{L4.3}
	For $z\in[1,\infty)$, there exists $\gamma_* = \gamma_*(z) > 0$ such that the following two assertions hold:
	\begin{enumerate}
		\item  If $0<\gamma < \gamma_*(z)$, there exists $x_*(\gamma,z) \in (1+\gamma, \infty)$ such that $x~\mapsto~{\mathcal F}(x, \gamma, z)$ is increasing on $[1+\gamma, x_*]$ and decreasing on $[x_*, \infty).$
		\vspace{0.2cm}

		\item If $\gamma \geq \gamma_*(z)$, the map $x~\mapsto~{\mathcal F}(x, \gamma, z)$ is monotonically decreasing.
	\end{enumerate}
\end{lemma}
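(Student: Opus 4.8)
The plan is to reduce the whole statement to the piecewise monotonicity of the single-variable function $H(y,z)=y(1+\sqrt{1-y^2})^z$ from \eqref{E-11}, already established in Lemma \ref{L4.2}, by reading off the sign of $\partial_x{\mathcal F}$ from the factorization \eqref{E-10}: since $a(z),\gamma,x>0$ there, the sign of $\partial_x{\mathcal F}(x,\gamma,z)$ is the opposite of the sign of $H(\tfrac{1+\gamma}{x},z)-H(\tfrac{1-\gamma}{x},z)$. Write $y_{\mathrm m}(z):=\frac{\sqrt{2z+1}}{z+1}\in(0,1)$ for the maximum point of $H(\cdot,z)$. By unimodality of $H(\cdot,z)$ together with $H(0,z)=0<1=H(1,z)$, the point $y_*(z)$ with $H(y_*(z),z)=1$ satisfies $y_*(z)\in(0,y_{\mathrm m}(z))$, the constant $\gamma_*(z):=\frac{1-y_*(z)}{1+y_*(z)}$ lies in $(0,1)$ (in particular it is positive, as required), and $H(y,z)<1$ on $[0,y_*(z))$ while $H(y,z)>1$ on $(y_*(z),1)$. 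Since $\gamma\mapsto\frac{1-\gamma}{1+\gamma}$ is strictly decreasing on $(0,1)$, the conditions $\gamma<\gamma_*(z)$, $\gamma=\gamma_*(z)$, $\gamma>\gamma_*(z)$ are equivalent to $\frac{1-\gamma}{1+\gamma}>y_*(z)$, $=y_*(z)$, $<y_*(z)$, respectively.

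Next I would substitute $p:=\frac{1+\gamma}{x}$, so that $x\mapsto p$ is a strictly decreasing bijection of $[1+\gamma,\infty)$ onto $(0,1]$ and $\frac{1-\gamma}{x}=p/\lambda$ with $\lambda:=\frac{1+\gamma}{1-\gamma}>1$; the sign of $\partial_x{\mathcal F}$ is then the opposite of the sign of $\Psi(p):=H(p,z)-H(p/\lambda,z)$ on $(0,1]$. The key structural claim, which is the heart of the proof, is that $\Psi$ is strictly positive on $(0,y_{\mathrm m}(z)]$, strictly decreasing on $[y_{\mathrm m}(z),\min\{\lambda y_{\mathrm m}(z),1\}]$, and strictly negative on $[\lambda y_{\mathrm m}(z),1]$ whenever that last interval is nonempty. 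The first and third assertions are immediate from the strict monotonicity of $H(\cdot,z)$ on each of its two branches together with $p/\lambda<p$; the middle one holds because on that interval $p$ lies on the decreasing branch of $H$ while $p/\lambda$ lies on the increasing branch, so $\Psi$ is a strictly decreasing function minus a strictly increasing one. Consequently $\Psi$ has at most one zero on $(0,1]$, and where a zero occurs the sign of $\Psi$ changes from $+$ to $-$.

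Finally I would split into the three cases according to the endpoint value $\Psi(1)=1-H\big(\tfrac{1-\gamma}{1+\gamma},z\big)$. If $\gamma<\gamma_*(z)$ then $\frac{1-\gamma}{1+\gamma}\in(y_*(z),1)$, hence $H(\tfrac{1-\gamma}{1+\gamma},z)>1$ and $\Psi(1)<0$; combined with $\Psi>0$ near $p=0$ this gives a unique zero $p_0\in(y_{\mathrm m}(z),1)$, with $\Psi>0$ on $(0,p_0)$ and $\Psi<0$ on $(p_0,1]$. Translating back through $x=\frac{1+\gamma}{p}$ yields $\partial_x{\mathcal F}>0$ on $[1+\gamma,x_*)$ and $\partial_x{\mathcal F}<0$ on $(x_*,\infty)$ with $x_*:=\frac{1+\gamma}{p_0}\in(1+\gamma,\infty)$, which is assertion (1). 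If $\gamma=\gamma_*(z)$ then $\Psi(1)=0$ and $\Psi>0$ on $(0,1)$; if $\gamma>\gamma_*(z)$ then $\frac{1-\gamma}{1+\gamma}<y_*(z)<y_{\mathrm m}(z)$, which forces $\lambda y_{\mathrm m}(z)>1$ (so the negative branch is empty) and $\Psi(1)>0$, so $\Psi$ is strictly decreasing on $[y_{\mathrm m}(z),1]$ and stays positive there, hence $\Psi>0$ on all of $(0,1]$. In both of these sub-cases $\partial_x{\mathcal F}\le0$ on $[1+\gamma,\infty)$ with strict inequality on the interior, i.e. ${\mathcal F}(\cdot,\gamma,z)$ is monotonically decreasing, which is assertion (2). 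I expect the only genuinely nonroutine point to be the ``strictly decreasing minus strictly increasing'' observation, together with the small piece of bookkeeping that the regime $\lambda y_{\mathrm m}(z)<1$ can occur only when $\gamma<\gamma_*(z)$; the rest is a routine unwinding of the monotonicities in Lemma \ref{L4.2}.
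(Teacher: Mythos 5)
Your proposal is correct and follows essentially the same route as the paper: both reduce the sign of $\partial_x{\mathcal F}$ via \eqref{E-10} to the sign of $H\left(\frac{1+\gamma}{x},z\right)-H\left(\frac{1-\gamma}{x},z\right)$, define $\gamma_*(z)$ through the point $y_*(z)$ with $H(y_*(z),z)=H(1,z)$, and settle the trichotomy by the sign of this difference at the endpoint $x=1+\gamma$. The only (cosmetic) difference is how the single sign change is established -- your decomposition of $\Psi(p)=H(p,z)-H(p/\lambda,z)$ into a positive branch, a strictly decreasing branch, and a negative branch versus the paper's argument that any two zeros of $h(x)=H\left(\frac{1+\gamma}{x},z\right)-H\left(\frac{1-\gamma}{x},z\right)$ must coincide -- and both rest on the same unimodality from Lemma \ref{L4.2}.
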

\begin{proof}
	As it is mentioned above, note that $(\partial_x\mathcal F)(x,\gamma,z)$ is negative for sufficiently large $x\gg1+\gamma$, i.e.,
	\begin{align}\label{E-12}
		(\partial_x\mathcal F)(x,\gamma,z)<0,\quad\mbox{for}~~x\gg1+\gamma,
	\end{align}
	hence, we only consider the zeros of $\partial_x\mathcal F$.

	\vspace{.2cm}

	\noindent(1) By the definition of $\gamma_*(z)$ and the strict decreasing property of the map $\gamma\mapsto\frac{1-\gamma}{1+\gamma}$, we have
	\[(\partial_x\mathcal F)(1+\gamma,\gamma,z)>0,\quad\mbox{for}~~\gamma\in\big(0,\gamma_*(z)\big).\]
	Since we have \eqref{E-12}, it is enough to show that
	\[h(x):=H\left(\frac{1+\gamma}{x},z\right)-H\left(\frac{1-\gamma}{x},z\right)\]
	touches zero only once in $(1+\gamma,\infty)$. Let $x_1,x_2\ge1+\gamma$ be zeros of $h(x)=0$. Without loss of generality, assume that $x_1\ge x_2$. Then, Lemma \ref{L4.2} implies (see Figure \ref{Fig_H_line})
	\begin{align*}
		\frac{1-\gamma}{x_1} \leq \frac{1-\gamma}{x_2} \quad \mbox{and} \quad  \frac{1+\gamma}{x_2} \leq \frac{1+\gamma}{x_1},
	\end{align*}
	i.e.,
	\[x_2\le x_1\quad\mbox{and}\quad x_1\le x_2,\]
	which concludes $x_1 = x_2$ and we complete the proof of our first assertion.

	\vspace{.4cm}

	\noindent(2) Suppose that $\gamma\ge\gamma_*(z)$. By the similar argument above, we have
	\[(\partial_x\mathcal F)(1+\gamma,\gamma,z)\le0,\]
	and it is enough to show that $h$ does not have zeros in $(1+\gamma,\infty)$. Suppose that there exists a zero $x>1+\gamma$ of $h(x)=0$. Once again, by Lemma \ref{L4.2}, we obtain
	\begin{align}\label{E-12-1}
		y_*(z)<\frac{1-\gamma}{x}\quad\mbox{and}\quad\frac{1+\gamma}{x}<1.
	\end{align}
	On the other hand, $\gamma\ge\gamma_*(z)$ yields
	\begin{align}\label{E-12-2}
		\frac{1-\gamma}{1+\gamma}\le\frac{1-\gamma_*(z)}{1+\gamma_*(z)}=y_*(z).
	\end{align}
	The above two relations \eqref{E-12-1} and \eqref{E-12-2} give $x=1+\gamma$, which is a contradiction. Hence, we complete the proof of our second assertion.
\end{proof}

\subsubsection{Solvability of \eqref{E-8}} \label{sec:4.2.3}
In this part, we study the solvability of \eqref{E-8} which will be enough to show the existence of measure-valued stationary death state (see Proposition \ref{P4.1}).  First, we begin with the characterization of $x_*(\gamma,z)$  at which the monotonicity of ${\mathcal F}(\cdot,\gamma,z)$ is changing from the increasing mode to the decreasing mode (see the first assertion in Lemma \ref{L4.3}). \newline

For $z \in [1, \infty)$, we define a set ${\mathcal S}_z$, maps ${\mathcal H}_z:~{\mathcal S}_z~\mapsto~\bbr$ and ${\mathcal A}:~\bbr^2 \to \bbr$ as follows.
\begin{gather}\label{SHA}
\begin{aligned}
& \mathcal S_{z} := \Big \{(x_1, x_2) \in (0,1)^2:~ y_*(z) \leq x_1 < x_2 \leq 1, ~H(x_1, z) = H(x_2,z) \Big \}, \\
& \mathcal H_{z}(x_1, x_2) := \frac{x_2-x_1}{x_2+x_1}, \quad \forall~(x_1, x_2) \in \mathcal S_{z}, \quad {\mathcal A}(x, y) := x + y, \quad \forall~ x, y \in \bbr.
\end{aligned}
\end{gather}
\begin{lemma} \label{L4.4}
The following assertions hold.
\begin{enumerate}
\item
The map $\mathcal H_z$ is one-to-one.
\vspace{0.2cm}
\item
The point $x_* \in (1 + \gamma, \infty)$ appearing in Lemma \ref{L4.3} (i) is explicitly given as a function of $\gamma$:
\[ x_*(\gamma,z) = \frac{2}{{\mathcal A}(\mathcal H_z^{-1} (\gamma))}. \]
\end{enumerate}
\end{lemma}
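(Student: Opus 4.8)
The plan is to use the bell-shaped profile of $H(\cdot,z)$ from Lemma \ref{L4.2} together with the explicit formula \eqref{E-10} for $\partial_x\mathcal F$, from which $x_*$ can be read off directly. Throughout, write $y_{\max}(z):=\frac{\sqrt{2z+1}}{z+1}$ for the unique maximizer of $H(\cdot,z)$ on $[0,1]$; by Lemma \ref{L4.2} the map $y\mapsto H(y,z)$ is strictly increasing on $[0,y_{\max}(z)]$ and strictly decreasing on $[y_{\max}(z),1]$, with $H(0,z)=0$ and $H(1,z)=1<H(y_{\max}(z),z)$. Hence the equation $H(y,z)=1$ has the unique solution $y_*(z)$ in $(0,1)$ (cf. \eqref{y_star}), and necessarily $y_*(z)<y_{\max}(z)$. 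A preliminary observation I would record is that every $(x_1,x_2)\in\mathcal S_z$ straddles the maximizer, $y_*(z)\le x_1<y_{\max}(z)<x_2\le1$, because if both coordinates lay in one monotonicity interval then injectivity of $H(\cdot,z)$ there would contradict $x_1<x_2$ together with $H(x_1,z)=H(x_2,z)$.

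For assertion (1), suppose $(x_1,x_2),(x_1',x_2')\in\mathcal S_z$ satisfy $\mathcal H_z(x_1,x_2)=\mathcal H_z(x_1',x_2')$. Adding $1$ to both sides of $\frac{x_2-x_1}{x_2+x_1}=\frac{x_2'-x_1'}{x_2'+x_1'}$ and simplifying yields $\frac{x_1}{x_2}=\frac{x_1'}{x_2'}$, so there is $\lambda>0$ with $(x_1,x_2)=\lambda(x_1',x_2')$; relabelling the two pairs if necessary, assume $\lambda\ge1$. Since $x_1,x_1'\in[y_*(z),y_{\max}(z))$ lie on the increasing branch, $x_1=\lambda x_1'\ge x_1'$ gives $H(x_1,z)\ge H(x_1',z)$; since $x_2,x_2'\in(y_{\max}(z),1]$ lie on the decreasing branch, $x_2=\lambda x_2'\ge x_2'$ gives $H(x_2,z)\le H(x_2',z)$. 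Combining with $H(x_1,z)=H(x_2,z)$ and $H(x_1',z)=H(x_2',z)$ forces all four values to coincide, and strict monotonicity of $H(\cdot,z)$ on each branch gives $x_1=x_1'$ and $x_2=x_2'$. Hence $\mathcal H_z$ is injective.

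For assertion (2), recall from the proof of Lemma \ref{L4.3}(i) that, for $0<\gamma<\gamma_*(z)$, the point $x_*(\gamma,z)\in(1+\gamma,\infty)$ is the unique zero in $(1+\gamma,\infty)$ of $h(x):=H\!\left(\frac{1+\gamma}{x},z\right)-H\!\left(\frac{1-\gamma}{x},z\right)$, since $\partial_x\mathcal F(x,\gamma,z)=-\frac{a(z)}{2\gamma x}\,h(x)$ by \eqref{E-10}. Put $x_1:=\frac{1-\gamma}{x_*}$ and $x_2:=\frac{1+\gamma}{x_*}$; then $0<x_1<x_2<1$ and $H(x_1,z)=H(x_2,z)$, and the straddling observation gives $x_2\in(y_{\max}(z),1)$, so $H(x_1,z)=H(x_2,z)>H(1,z)=1=H(y_*(z),z)$ and hence $x_1>y_*(z)$. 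Therefore $(x_1,x_2)\in\mathcal S_z$, and $\mathcal H_z(x_1,x_2)=\frac{(1+\gamma)-(1-\gamma)}{(1+\gamma)+(1-\gamma)}=\gamma$. By assertion (1) this identifies $(x_1,x_2)=\mathcal H_z^{-1}(\gamma)$ (and shows in passing that $\gamma$ indeed lies in the range of $\mathcal H_z$), so $\mathcal A(\mathcal H_z^{-1}(\gamma))=x_1+x_2=\frac{2}{x_*}$, which rearranges to $x_*(\gamma,z)=\frac{2}{\mathcal A(\mathcal H_z^{-1}(\gamma))}$, as claimed. The only mildly delicate part is the bookkeeping on monotonicity branches—in particular $y_*(z)<y_{\max}(z)$ and the straddling property—all of which is a direct consequence of Lemma \ref{L4.2}; everything else is elementary algebra.
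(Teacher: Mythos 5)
Your proof is correct and follows essentially the same route as the paper: both parts rest on the unimodality of $H(\cdot,z)$ from Lemma \ref{L4.2} combined with the elementary algebra of the ratio $\frac{x_2-x_1}{x_2+x_1}$, and part (2) is the same computation $\mathcal H_z\bigl(\tfrac{1-\gamma}{x_*},\tfrac{1+\gamma}{x_*}\bigr)=\gamma$. The only (welcome) difference is that you verify explicitly that $\bigl(\tfrac{1-\gamma}{x_*},\tfrac{1+\gamma}{x_*}\bigr)$ actually lies in $\mathcal S_z$ (i.e., $\tfrac{1-\gamma}{x_*}>y_*(z)$), a membership check the paper asserts without detail.
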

\begin{proof}
(i)~We choose
\[ (x_1, x_2) \in {\mathcal S}_z \quad \mbox{and} \quad  ({\tilde x}_1,{\tilde x}_2) \in {\mathcal S}_z. \]
Then, we have
\[  y_*(z) \leq  x_1 < x_2 \leq 1 \quad \mbox{and} \quad y_*(z) \leq {\tilde x}_1 < {\tilde x}_2 \leq 1. \]
Without loss of generality, we may assume
\[ x_1 \leq {\tilde x}_1. \]
Then, we use Lemma \ref{L4.1} to see that (see Figure \ref{Fig_H_line})
\begin{align} \label{E-17}
x_1 \leq {\tilde x}_1 < {\tilde x}_2 \leq x_2.
\end{align}
Suppose that
\[
 \mathcal H_z(x_1, x_2) = \mathcal H_z ({\tilde x}_1, {\tilde x}_2).
\]
 This yields
 \begin{align}\label{E-18}
 {\tilde x}_1  x_2  = x_1 {\tilde x}_2.
 \end{align}
 Now, we use \eqref{E-17} and \eqref{E-18} to find
\begin{align*}
1\leq \frac{{\tilde x}_1}{x_1} = \frac{{\tilde x}_2}{x_2} \leq 1,
\end{align*}
which yields
\[ x_1 = {\tilde x}_1 \quad \mbox{and} \quad  x_2 = {\tilde x}_2. \]
 Hence, $\mathcal H_z$ is one-to-one. \newline

\noindent (ii)~From Lemma \ref{L4.3} with \eqref{E-10}, we have
\[
H\left(\frac{1-\gamma}{x_*}, z \right) = H\left(\frac{1+\gamma}{x_*}, z \right) \quad \mbox{and} \quad \Big(\frac{1-\gamma}{x_*},  \frac{1+\gamma}{x_*} \Big) \in {\mathcal S}_z.
\]
This implies
\[ \mathcal H_z \left( \frac{1-\gamma}{x_*}, \frac{1+\gamma}{x_*} \right) =  \frac{ \frac{1+ \gamma}{x_*} - \frac{1-\gamma}{x_*}} { \frac{1+ \gamma}{x_*} + \frac{1-\gamma}{x_*}} = \gamma
\quad \mbox{or equivalently} \quad  x_*(\gamma,z) = \frac{2}{{\mathcal A}(\mathcal H_z^{-1} (\gamma))}.
 \]
\end{proof}
Now we are ready to provide an existence of measure-valued stationary death state for \eqref{E-0-0}.

\begin{theorem} \label{T4.1}
Let $g = g(\nu)$, ${\mathcal F} = {\mathcal F}(x, \gamma, z)$, and $y_* = y_*(z)$ be given as \eqref{E-7}, \eqref{E-8}, and \eqref{y_star}, respectively. Then, the equation \eqref{E-0-0} admits the measure-valued stationary death state if and only if
\begin{align*}
\kappa \geq
\begin{dcases}
\frac{1}{ {\mathcal F} \left(\frac{2}{{\mathcal A}\left(\mathcal H_{z}^{-1}(\gamma)\right)}, \gamma, z \right)}, & 0<\gamma <\gamma_*(z), \\
\frac{1}{ {\mathcal F}(1+\gamma, \gamma, z)}, & \gamma_*(z) \leq \gamma \leq 1, \\
\end{dcases}
\qquad \gamma_*(z) = \frac{1-y_*(z)}{1+y_*(z)}, \quad \forall~z\in[1,\infty),
\end{align*}
where maps $\mathcal H_z$ and ${\mathcal A}$ are defined in \eqref{SHA}.
\end{theorem}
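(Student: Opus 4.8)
The plan is to assemble the statement from Proposition \ref{P4.1}, Lemma \ref{L4.1}, Lemma \ref{L4.3}, and Lemma \ref{L4.4}, none of which requires fresh estimates. By Proposition \ref{P4.1}, the kinetic model \eqref{E-0-0} admits a measure-valued stationary death state of the form $f^\infty = \delta(\theta - \theta^*(\nu,z))$ precisely when the integral equation \eqref{E-3} has a solution $\tilde\sigma(z) \in \bbr_+$. For the uniform law \eqref{E-7}, substituting $x = \tilde\sigma(z)$ (which is automatically positive, and which forces $x \ge 1+\gamma$ since the integrand $(1+\sqrt{1-\nu_*^2/x^2})^z$ is real only there) and dividing \eqref{E-3} by $\kappa x$ turns it exactly into \eqref{E-8}, i.e.\ into the requirement that $\mathcal F(x,\gamma,z) = 1/\kappa$ for some $x \ge 1+\gamma$. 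Thus the existence question collapses to a one-variable analysis of $\mathcal F(\cdot,\gamma,z)$ for each fixed $(\gamma, z) \in (0,1) \times [1,\infty)$.

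Next I would invoke Lemma \ref{L4.1}: since $x \mapsto \mathcal F(x,\gamma,z)$ is continuous on $[1+\gamma,\infty)$ and vanishes at infinity, \eqref{E-8} is solvable if and only if $\max_{x\ge 1+\gamma}\mathcal F(x,\gamma,z) \ge 1/\kappa$. It then remains only to locate this maximum, which is exactly the content of Lemma \ref{L4.3}, with the threshold $\gamma_*(z) = \frac{1-y_*(z)}{1+y_*(z)}$ determined by \eqref{y_star}. For $0 < \gamma < \gamma_*(z)$, Lemma \ref{L4.3}(1) says $\mathcal F(\cdot,\gamma,z)$ increases on $[1+\gamma, x_*(\gamma,z)]$ and decreases on $[x_*(\gamma,z),\infty)$, so the maximum is attained at the interior point $x_*(\gamma,z)$, whose closed form $x_*(\gamma,z) = \tfrac{2}{\mathcal A(\mathcal H_z^{-1}(\gamma))}$ is supplied by Lemma \ref{L4.4}(2). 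For $\gamma \ge \gamma_*(z)$, Lemma \ref{L4.3}(2) gives that $\mathcal F(\cdot,\gamma,z)$ is monotonically decreasing, so the maximum is attained at the left endpoint $x = 1+\gamma$. Substituting these two maximizers and using the positivity of $\mathcal F$ (so that passing to reciprocals reverses the inequality), the criterion $\max_x \mathcal F(x,\gamma,z) \ge 1/\kappa$ becomes $\kappa \ge 1/\mathcal F\big(\tfrac{2}{\mathcal A(\mathcal H_z^{-1}(\gamma))},\gamma,z\big)$ when $0<\gamma<\gamma_*(z)$ and $\kappa \ge 1/\mathcal F(1+\gamma,\gamma,z)$ when $\gamma_*(z) \le \gamma \le 1$, which is precisely the claimed dichotomy; combining this with the reduction of the first paragraph yields the stated if-and-only-if.

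Since the real work is already done in Lemmas \ref{L4.2}–\ref{L4.4}, there is no serious obstacle; the only point I would spell out is consistency at the boundary $\gamma = \gamma_*(z)$. There $(\partial_x\mathcal F)(1+\gamma,\gamma,z) = 0$ by \eqref{E-11-1} together with \eqref{y_star}, so $x_*(\gamma_*(z),z) = 1+\gamma_*(z)$; correspondingly $\mathcal H_z^{-1}(\gamma_*(z)) = (y_*(z),1)$ has coordinate sum $\mathcal A(y_*(z),1) = 1+y_*(z)$, and $\tfrac{2}{1+y_*(z)} = 1+\gamma_*(z)$, so the two branches of the threshold agree on the overlap. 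I would also remark that the degenerate endpoint $\gamma = 1$ (where $1-\gamma = 0$) causes no trouble, since $H(0,z)=0$ keeps all quantities finite. With these remarks the proof is complete.
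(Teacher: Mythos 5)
Your proposal is correct and follows essentially the same route as the paper: reduce via Proposition \ref{P4.1} and Lemma \ref{L4.1} to the condition $\kappa \ge 1/\max_{x\ge 1+\gamma}\mathcal F(x,\gamma,z)$, then locate the maximizer using Lemma \ref{L4.3} together with the explicit formula from Lemma \ref{L4.4}. Your extra check of consistency of the two branches at $\gamma=\gamma_*(z)$ is a welcome addition but does not change the argument.
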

\begin{proof} By Lemma \ref{L4.1}, the solvability of \eqref{E-8} is equivalent to the following relation:
\begin{align} \label{E-19}
\displaystyle  \kappa \geq \frac{1}{ \max_{x\geq1+\gamma} {\mathcal F}(x, \gamma, z)}, \quad \mbox{for}~(\gamma, z) \in (0,1) \times [1, \infty).
 \end{align}
\noindent $\bullet$~Case A: Suppose that
\[  0 < \gamma < \gamma_*(z). \]
Then, it follows from Lemma \ref{L4.3} and Lemma \ref{L4.4} (ii) that
\begin{align} \label{E-20}
\max_{x\geq1+\gamma} {\mathcal F}(x, \gamma, z) = {\mathcal F}(x_*(\gamma,z), \gamma, z) =  {\mathcal F} \left( \frac{2}{{\mathcal A}(\mathcal H_z^{-1} (\gamma))}, \gamma, z \right).
\end{align}

\vspace{0.2cm}

\noindent $\bullet$~Case B: Suppose that
\[   \gamma \geq \gamma_*(z). \]
In this case, by Lemma \ref{L4.3} (ii), we have
\begin{align} \label{E-21}
 \max_{x\geq1+\gamma} {\mathcal F}(x, \gamma, z) = {\mathcal F}(1 + \gamma, \gamma, z).
\end{align}
Finally, we combine \eqref{E-19}, \eqref{E-20} and \eqref{E-21} to find the desired estimate.
\end{proof}

\subsection{Dirac distribution} \label{sec:4.3}
Suppose that natural frequency follows a Dirac distribution concentrated at $\nu_0\in\mathbb R$. In this case, the distribution function $g$ is given by
\[ g(\nu) = \delta(\nu-\nu_0). \]
\indent In the next theorem, we study a sufficient and necessary condition for a measure-valued stationary death state when natural frequency follows a Dirac distribution concentrated at $\nu_0\in\mathbb R$.
\begin{theorem} \label{T4.2}
Suppose that the distribution $g$ of the natural frequency $\nu$ takes the form of
\[  g(\nu) = \delta(\nu-\nu_0) \quad \mbox{for some}\quad\nu\in\mathbb R. \]
Then, the equation \eqref{E-0-0} admits a measure-valued stationary death state if and only if the following relations hold:
\begin{align}\label{E-23}
	\begin{aligned}
		&(i)~\mbox{If $\nu_0=0$,}\qquad\kappa>0.\\
		&(ii)~\mbox{If $\nu_0\ne0$,}\qquad\kappa \geq \frac{|\nu_0|}{a(z)} \frac{z+1}{\sqrt{2z+1}} \left( \frac{z+1}{2z+1} \right)^{z}, \quad \forall~z\geq1.
	\end{aligned}
\end{align}
\end{theorem}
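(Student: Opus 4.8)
The plan is to invoke Proposition~\ref{P4.1}, which reduces the existence of a measure-valued stationary death state for \eqref{E-0-0} to the solvability of the integral equation \eqref{E-3}. Inserting $g(\nu)=\delta(\nu-\nu_0)$ and carrying out the $\nu_*$-integration, \eqref{E-3} collapses, for each fixed $z\in[1,\infty)$, to the scalar relation
\[
\tilde\sigma(z)=\kappa a(z)\Big(1+\sqrt{1-\nu_0^2/\tilde\sigma(z)^2}\Big)^{z},\qquad \tilde\sigma(z)\ge|\nu_0|,
\]
the constraint $\tilde\sigma(z)\ge|\nu_0|$ being precisely what keeps $\theta^*(\nu_0,z)\in[-\pi/2,\pi/2]$ as required in Proposition~\ref{P4.1}. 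When $\nu_0=0$ this is just $\tilde\sigma(z)=2^{z}\kappa a(z)$, which has the positive solution $\tilde\sigma(z)=2^{z}\kappa a(z)$ for every $\kappa>0$ (and none for $\kappa=0$), giving assertion (i).

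For $\nu_0\ne0$ I would substitute $y=|\nu_0|/\tilde\sigma(z)\in(0,1]$, so that $\tilde\sigma(z)=|\nu_0|/y$ and the equation becomes
\[
H(y,z)=\frac{|\nu_0|}{\kappa a(z)},\qquad H(y,z)=y\big(1+\sqrt{1-y^2}\big)^{z},
\]
with $H$ exactly the function introduced in \eqref{E-11}. By Lemma~\ref{L4.2}, $H(\cdot,z)$ is continuous, vanishes at $0$, equals $1$ at $1$, and is unimodal on $[0,1]$ with interior maximum at $y_z^{\max}=\sqrt{2z+1}/(z+1)$, which lies strictly in $(0,1)$ for every $z\ge1$ since $(z+1)^2-(2z+1)=z^2>0$. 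Hence $H(\cdot,z)$ maps $(0,1]$ onto $(0,M_z]$, where, using $1-\tfrac{2z+1}{(z+1)^2}=\tfrac{z^2}{(z+1)^2}$,
\[
M_z:=H(y_z^{\max},z)=\frac{\sqrt{2z+1}}{z+1}\Big(1+\tfrac{z}{z+1}\Big)^{z}=\frac{\sqrt{2z+1}}{z+1}\Big(\frac{2z+1}{z+1}\Big)^{z}.
\]
Since $|\nu_0|/(\kappa a(z))>0$, the intermediate value theorem yields a solution $y\in(0,1]$ if and only if $|\nu_0|/(\kappa a(z))\le M_z$, i.e. if and only if
\[
\kappa\ \ge\ \frac{|\nu_0|}{a(z)M_z}=\frac{|\nu_0|}{a(z)}\,\frac{z+1}{\sqrt{2z+1}}\Big(\frac{z+1}{2z+1}\Big)^{z}.
\]
In that case $\tilde\sigma(z)=|\nu_0|/y\ge|\nu_0|>0$, so Proposition~\ref{P4.1} produces the death state; conversely any such state forces this inequality. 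Imposing it for every $z\in[1,\infty)$ (so that $\tilde\sigma$ is defined on all of $[1,\infty)$) gives \eqref{E-23}(ii).

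The argument is essentially bookkeeping, and I do not anticipate a serious obstacle; the only points demanding care are (a) verifying that $y_z^{\max}\in(0,1)$ for all $z\ge1$, so that the maximum of $H(\cdot,z)$ is genuinely interior and given by the closed form above, and (b) retaining the boundary value $y=1$, which corresponds to $\tilde\sigma(z)=|\nu_0|$ and $\theta^*=\pm\pi/2$ and is still an admissible stationary death configuration in the sense of \eqref{E-0-3}. Tracking the direction of both implications (solvability $\Longleftrightarrow$ the stated bound on $\kappa$) is the part where I would be most explicit.
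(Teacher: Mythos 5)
Your proposal is correct and follows essentially the same route as the paper: both reduce via Proposition \ref{P4.1} to the solvability of \eqref{E-3}, handle $\nu_0=0$ by the explicit formula $\tilde\sigma(z)=2^z\kappa a(z)$, and for $\nu_0\ne0$ rewrite the equation as $H(|\nu_0|/\tilde\sigma(z),z)=|\nu_0|/(\kappa a(z))$ with $H$ from \eqref{E-11}, invoking Lemma \ref{L4.2} to identify the maximum $H\bigl(\tfrac{\sqrt{2z+1}}{z+1},z\bigr)=\tfrac{\sqrt{2z+1}}{z+1}\bigl(\tfrac{2z+1}{z+1}\bigr)^{z}$ and conclude solvability if and only if $\kappa$ meets the stated threshold. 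Your additional care about the interior location of the maximizer and the admissibility of the boundary case $y=1$ is consistent with, and slightly more explicit than, the paper's argument.
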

\begin{proof}
(i)~For $\nu_0=0$, the relation \eqref{E-3} becomes
\[\tilde\sigma(z)=\kappa a(z)2^z.\]
Hence, $\tilde\sigma(z)$ exists as a positive real number as long as $\kappa>0$.

\vspace{.2cm}

\noindent(ii)~Assume that $|\nu_0|\ne0$. Then, the relation \eqref{E-3} becomes, with $x=\tilde\sigma(z)$,
\begin{align} \label{E-22}
	\frac{1}{\kappa} = \frac{a(z)}{x} \left( 1+\sqrt{1-\frac{\nu_0^2}{x^2}} \right)^{z} =  \frac{a(z)}{|\nu_0|} \frac{|\nu_0|}{x} \left( 1+\sqrt{1-\frac{|\nu_0|}{x^2}} \right)^{z}      = \frac{a(z)}{|\nu_0|} H\left(\frac{|\nu_0|}{x}, z \right),
\end{align}
where $H$ is defined in \eqref{E-11}. By Lemma \ref{L4.2}, the equation \eqref{E-22} has a solution if and only if $(\kappa, \nu_0, z)$ satisfies
\begin{align*}
\kappa \geq \frac{|\nu_0|}{a(z)} \left( \max_{x\geq|\nu_0|} H\left(\frac{|\nu_0|}{x}, z \right) \right)^{-1} = \frac{|\nu_0|}{a(z)} H\left(\frac{\sqrt{2z+1}}{z+1}, z \right)^{-1} = \frac{|\nu_0|}{a(z)} \frac{z+1}{\sqrt{2z+1}} \left( \frac{z+1}{2z+1} \right)^{z},
\end{align*}
for all $z \geq 1$.
\end{proof}
\begin{remark} \label{R4.4}
It is worthwhile to note that the right-hand side of \eqref{E-23} converges, as $z \to \infty$. Recall Stirling's formula
\begin{align*}
	\Gamma(x+1) \sim \sqrt{2\pi x} \bigg( \frac{x}{e} \bigg)^x \quad \mbox{as} \quad x~\to~\infty
\end{align*}
to see
\begin{align} \label{E-24}
a(z) = \frac{2^z\Gamma(z+1)^2}{\Gamma(2z+1)} \sim \frac{2^z \cdot 2\pi z (z/e)^{2z}}{\sqrt{4\pi z}(2z/e)^{2z}} \quad \mbox{as} \quad z~\to~\infty.
\end{align}
Now, we use the asymptotic estimate for $a(z)$ in \eqref{E-24} to see that
\begin{align*}
\begin{aligned}
& \lim_{z\to\infty} \frac{1}{a(z)} \frac{z+1}{\sqrt{2z+1}} \left( \frac{z+1}{2z+1} \right)^{z} = \lim_{z\to\infty} \frac{\sqrt{4\pi z}(2 z/e)^{2z}}{2^z \cdot 2\pi z (z/e)^{2z}} \cdot \frac{z+1}{\sqrt{2z+1}} \bigg( \frac{2z+1}{z+1} \bigg)^{-z} \\
&\hspace{1cm} = \lim_{z\to\infty} \frac{(2z/e)^{2z}}{(2z/e)^{2z}} \cdot \frac{\sqrt{4\pi z}}{\sqrt{2z+1}} \cdot \frac{z+1}{2\pi z} \bigg( \frac{2z+1}{2z+2} \bigg)^{-z} = \sqrt{\frac{e}{2\pi}}.
\end{aligned}
\end{align*}
\end{remark}
\section{Local sensitivity analysis kinetic Winfree model} \label{sec:5}
\setcounter{equation}{0}
In this section, we study the local sensitivity analysis to the random kinetic Winfree model. First, we recall the Cauchy problem to the random Winfree model with high-order coupling:
\begin{align}
\begin{cases} \label{D-0}
		\displaystyle \partial_t f +\partial_\theta(f L[f]) = 0, \quad (t, \theta, \nu, z) \in  \bbr_+ \times \bbt \times \bbr \times [1,\infty),  \vspace{0.1cm} \\
		\displaystyle f \Big|_{t  = 0+} = f^{\mathrm{\mathrm{in}}} \geq 0,~~\quad \int_{\bbt\times\mathbb R} f^{\mathrm{\mathrm{in}}} (\theta, \nu{\color{black},z})g(\nu)d\nu d\theta = 1,{\color{black}\quad\mbox{almost surely,}}
\end{cases}
\end{align}
where the alignment functional $L[f]$ is given as follows:
\begin{align}
\begin{cases}  \label{D-1}
 \displaystyle  L[f](t, \theta, \nu, z) = \nu-\sigma(t,z) \sin\theta, \quad  I(\theta, z) = a(z) (1+\cos\theta)^z, \vspace{0.1cm} \\
 \displaystyle  \sigma(t,z) = \kappa \int_{\bbt\times\bbr} I(\theta_*, z) f(t, \theta_*, \nu_*, z) g(\nu_*) d\theta_* d\nu_*.
\end{cases}
\end{align}
For the local sensitivity analysis of \eqref{D-0}, we employ the probability density functions $\rho(z)$ and $g(\nu)$ for a random variable $z$ and natural frequency $\nu$, respectively, and define the weighted norm in $L_{\theta, z}^p (\bbt\times [1, \infty))$, $L_{\theta, \nu, z}^p (\bbt \times \bbr \times [1, \infty))$, and $W^{k, p}_{\theta, \nu, z} (\bbt \times \mathbb R \times [1, \infty))$ for $p\in[1,\infty]$ and $k\in\mathbb{N}$, as follows:
\begin{align*}
\begin{aligned}
	& \|h\|_{L^p_{\theta,z}} := \left( \int_{\bbt\times[1,\infty)} |h(\theta, z)|^p \rho(z) d\theta dz  \right)^{1/p}, \\
	& \|h\|_{L^p_{\theta, \nu, z}} := \left( \int_{\bbt\times\bbr\times[1,\infty)} |h(\theta, \nu, z)|^p \rho(z)g(\nu) d\theta d\nu dz \right)^{1/p}, \quad  \|h\|_{W^{k,p}_{\theta, \nu, z}} := \left(\sum_{l=0}^k \|\partial_z^l h\|_{L^p_{\theta, \nu, z}}^p\right)^{1/p}.
\end{aligned}
\end{align*}
Next, we estimate the local sensitivity analysis in the following subsection.
\subsection{Propagation of $\| \partial_\theta^k f(t)\|_{L^p_{\theta, \nu, z}}$} \label{sec:5.1}
In this subsection, we discuss a sufficient framework for the propagation of regularity of a solution to \eqref{D-0}-\eqref{D-1}:
\[  \| \partial_\theta^k f(t)\|_{L^p_{\theta, \nu, z}}  < \infty, \quad \forall~t > 0. \]
One can refer to \cite{DPZ,MTZ,ZJ} on the propagation of random regularity to Vlasov-Fokker-Planck-type equations. First, we define a sequence of constants $\Lambda_{0,k}$ as
	\begin{align*}
		\Lambda_{0,0} = 1, \quad \Lambda_{0,k} = 2^k \sum_{l=0}^{k-1} \Lambda_{0,l}, \quad \forall~k\geq1.
	\end{align*}
We begin with the propagation of $\theta$-regularity.
\begin{proposition} \label{P5.1}
\emph{(Propagation of $\theta$-regularity)}
Suppose that system parameters and initial datum satisfy
\begin{gather*}
	0\le k, \quad 1<p\le\infty,\quad C := \|  I \|_{L^\infty_{\theta,z}} <+\infty,\\
	\partial_\theta^l f^{\mathrm{\mathrm{in}}} \in L^p_{\theta, \nu, z}(\mathbb T\times\mathbb R\times[1,\infty)), \quad \forall~l \in \{ 0 \} \cup [k],
\end{gather*}
and let $f = f(t, \theta, \nu, z)$ be a global solution process of \eqref{D-0} - \eqref{D-1}. Then, we have
\[ \partial_\theta^k f(t) \in L^p_{\theta, \nu, z}(\mathbb T\times\mathbb R\times[1,\infty)), \quad \mbox{$\forall~t>0$}. \]
In particular, we have
	\begin{align}\label{D-2}
		\| \partial_\theta^k f(t)\|_{L^p_{\theta, \nu, z}} \leq \Lambda_{0,k}e^{(k+1)C\kappa t} \sum_{l=0}^k \| \partial_\theta^l f^{\mathrm{\mathrm{in}}}\|_{L^p_{\theta, \nu, z}},\quad \forall~t\ge0.
	\end{align}
\end{proposition}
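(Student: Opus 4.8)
\emph{Proof plan.} The argument is an induction on $k$, and the single fact that makes everything work is a uniform bound on the mean-field coupling together with all of its $\theta$-derivatives. Since $f$ is a probability density in $(\theta,\nu)$ for almost every $z$, we have for all $t\ge 0$ and almost every $z$ that
\[ 0\le \sigma(t,z)=\kappa\int_{\bbt\times\bbr} I(\theta_*,z)f(t,\theta_*,\nu_*,z)g(\nu_*)\,d\theta_* d\nu_*\le \kappa\|I\|_{L^\infty_{\theta,z}}=\kappa C. \]
Because $L[f]=\nu-\sigma(t,z)\sin\theta$, every $\theta$-derivative of order $m\ge 1$ of $L[f]$ equals $\pm\sigma(t,z)\cos\theta$ or $\pm\sigma(t,z)\sin\theta$, so $\|\partial_\theta^m L[f]\|_{L^\infty}\le \kappa C$ for all $m\ge 1$; the zeroth-order term $L[f]$ itself is unbounded only through $\nu$, but it will never multiply a top-order derivative of $f$ after an integration by parts.

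First I would write the transport equation satisfied by $g_k:=\partial_\theta^k f$. Differentiating \eqref{D-0} $k$ times in $\theta$ and applying the Leibniz rule to $\partial_\theta^{k+1}(fL[f])$ gives
\[ \partial_t g_k + L[f]\,\partial_\theta g_k + (k+1)(\partial_\theta L[f])\,g_k + \sum_{j=0}^{k-1}\binom{k+1}{j}(\partial_\theta^j f)\bigl(\partial_\theta^{k+1-j}L[f]\bigr)=0, \]
where the first two nonlinear terms are the top two indices $j=k+1$ and $j=k$ in the Leibniz expansion, and each factor $\partial_\theta^{k+1-j}L[f]$ in the remaining sum has order $k+1-j\ge 2\ge 1$, hence is bounded by $\kappa C$ by the previous step.

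For $1<p<\infty$ I would run the standard $L^p$ energy estimate: multiply the displayed equation by $|g_k|^{p-2}g_k$ and integrate over $\bbt\times\bbr\times[1,\infty)$ against $\rho(z)g(\nu)$. The time term gives $\frac1p\frac{d}{dt}\|g_k\|_{L^p_{\theta,\nu,z}}^p$; the transport term $L[f]\partial_\theta g_k$, after integrating by parts in $\theta$ (no boundary term, since $\bbt$ is periodic, $L[f]$ is smooth in $\theta$, and the weights are $\theta$-independent), produces $-\frac1p\int(\partial_\theta L[f])|g_k|^p$, which combines with the $(k+1)(\partial_\theta L[f])g_k$ term; bounding $|\partial_\theta L[f]|\le \kappa C$ and using Hölder on the source (with $\||g_k|^{p-2}g_k\|_{L^{p'}}=\|g_k\|_{L^p}^{p-1}$) yields
\[ \frac{d}{dt}\|g_k(t)\|_{L^p_{\theta,\nu,z}} \le (k+1)\kappa C\,\|g_k(t)\|_{L^p_{\theta,\nu,z}} + \kappa C\sum_{j=0}^{k-1}\binom{k+1}{j}\|\partial_\theta^j f(t)\|_{L^p_{\theta,\nu,z}}. \]
Inserting the inductive bounds for $\|\partial_\theta^j f(t)\|_{L^p_{\theta,\nu,z}}$, $0\le j\le k-1$, using $\binom{k+1}{j}\le 2^k$ and the recursion $\Lambda_{0,k}=2^k\sum_{l=0}^{k-1}\Lambda_{0,l}$, and applying Gr\"onwall's lemma with the integrating factor $e^{-(k+1)\kappa C t}$ produces \eqref{D-2}; the base case $k=0$ is this computation with an empty source, giving $\|f(t)\|_{L^p_{\theta,\nu,z}}\le e^{\kappa C t}\|f^{\mathrm{in}}\|_{L^p_{\theta,\nu,z}}$ and $\Lambda_{0,0}=1$. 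The endpoint $p=\infty$ then follows by letting $p\to\infty$ in \eqref{D-2}, since $\rho(z)g(\nu)\,d\theta d\nu dz$ is a finite measure so that $\|\cdot\|_{L^p_{\theta,\nu,z}}\to\|\cdot\|_{L^\infty_{\theta,\nu,z}}$; alternatively one reads off the same bound directly along the characteristics $\dot\theta=L[f]$ of the transport equation for $g_k$.

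The main obstacle is not any single estimate but the book-keeping. One must (i) justify that $\partial_\theta^k f$ is regular enough for the energy identity to be legitimate, which is handled by the usual mollification/approximation procedure — derive the inequality for smoothed solutions and pass to the limit — so that the a priori estimate actually propagates the finiteness of $\|\partial_\theta^k f\|_{L^p_{\theta,\nu,z}}$ from $t=0$; (ii) control the combinatorial constants so that the recursion defining $\Lambda_{0,k}$ closes, for which the crude bound $\binom{k+1}{j}\le 2^k$ on $0\le j\le k-1$ is exactly what is needed; and (iii) deal with $p=\infty$, where the integration-by-parts step is replaced by the limiting (or characteristic) argument above. Everything else is a routine Gr\"onwall induction.
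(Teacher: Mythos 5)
Your proposal is correct and follows essentially the same route as the paper's proof in Appendix A: the uniform bound $\|\partial_\theta^m L[f]\|_{L^\infty_{\theta,z}}\le C\kappa$ for $m\ge1$ via mass conservation, the Leibniz expansion split into the two top-order terms (handled by integration by parts in $\theta$) plus the lower-order sum (handled by H\"older and $\binom{k+1}{j}\le 2^k$), followed by the inductive Gr\"onwall argument and the limit $p\to\infty$. No substantive differences to report.
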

\begin{proof}
The proof is quite lengthy and tedious, hence, we leave it in Appendix \ref{Proof_P5.1}.
\end{proof}

\subsection{Propagation of $\|f(t)\|_{W^{k,p}_{\theta, \nu, z}}$} \label{sec:5.2}
In this subsection, we study the propagation of $\|f(t)\|_{W^{k,p}_{\theta, \nu, z}}$. For this, we need two a priori lemmas.
\begin{lemma} \label{L5.1}
Suppose that the following conditions hold:
\[  1\le k,\quad0\le l,\quad1<p\le\infty, \quad C:= \max_{0\leq i\leq k} \|\partial^i_zI\|_{L^\infty_{\theta,z}} <\infty, \]
and let $f = f(t, \theta, \nu, z)$ be a global solution of \eqref{D-0}-\eqref{D-1}.  Then, we have
	\[\|\partial_z^k\partial_\theta^l L[f](t)\|_{L^p_{\theta, \nu, z}} \le 2^k \pi C\kappa\|f(t)\|_{W^{k,p}_{\theta, \nu,z}},\quad \forall~ t\ge0.\]
\end{lemma}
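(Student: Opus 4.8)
The plan is to reduce the whole estimate to a bound on the $z$-derivatives of the coupling amplitude $\sigma(t,z)$, exploiting that $L[f](t,\theta,\nu,z)=\nu-\sigma(t,z)\sin\theta$ depends on the phase only through $\sin\theta$ and on $f$ only through $\sigma$. First I would compute $\partial_\theta^l L[f]$: for $l=0$ it equals $\nu-\sigma(t,z)\sin\theta$, and for $l\ge 1$ the $\theta$-differentiation acts only on $\sin\theta$, so $\partial_\theta^l L[f]=-\sigma(t,z)\,\psi_l(\theta)$ with $\psi_l\in\{\pm\sin\theta,\pm\cos\theta\}$, hence $|\psi_l|\le 1$. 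The crucial point is that applying $\partial_z^k$ with $k\ge 1$ annihilates the natural-frequency term $\nu$ — this is precisely where the hypothesis $k\ge 1$ is used — so in every case
\[
\partial_z^k\partial_\theta^l L[f](t,\theta,\nu,z)=\big(\partial_z^k\sigma\big)(t,z)\,\psi_l(\theta),\qquad |\psi_l(\theta)|\le 1 .
\]
Taking the $L^p_{\theta,\nu,z}$-norm and using $\int_{\bbr}g(\nu)\,d\nu=1$ together with $\int_{\bbt}|\psi_l(\theta)|^p\,d\theta\le\int_{\bbt}d\theta$, this already reduces the claim to an estimate for $\big(\int_{[1,\infty)}|\partial_z^k\sigma(t,z)|^p\,\rho(z)\,dz\big)^{1/p}$ (with the obvious modification for $p=\infty$).

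Next I would estimate $\partial_z^k\sigma$ itself. Differentiating the defining integral for $\sigma$ under the integral sign and using the Leibniz rule,
\[
\partial_z^k\sigma(t,z)=\kappa\sum_{i=0}^{k}\binom{k}{i}\int_{\bbt\times\bbr}\big(\partial_z^i I\big)(\theta_*,z)\,\big(\partial_z^{k-i}f\big)(t,\theta_*,\nu_*,z)\,g(\nu_*)\,d\theta_*\,d\nu_* .
\]
Then I would invoke $|\partial_z^i I|\le C$ for all $0\le i\le k$ — this is exactly the role of the uniform bound $C=\max_{0\le i\le k}\|\partial_z^i I\|_{L^\infty_{\theta,z}}$ — and apply Hölder's inequality in $(\theta_*,\nu_*)$ against the finite measure $d\theta_*\,g(\nu_*)\,d\nu_*$, which turns each summand into a multiple of $\|\partial_z^{k-i}f(t,\cdot,\cdot,z)\|_{L^p_{\theta,\nu}}$. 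Taking the weighted $L^p_z$-norm, using Minkowski to pull out the finite sum, and observing that $\int_{[1,\infty)}\|\partial_z^{k-i}f(t,\cdot,\cdot,z)\|^p_{L^p_{\theta,\nu}}\,\rho(z)\,dz=\|\partial_z^{k-i}f(t)\|^p_{L^p_{\theta,\nu,z}}$, I get
\[
\Big(\int_{[1,\infty)}|\partial_z^k\sigma(t,z)|^p\,\rho(z)\,dz\Big)^{1/p}\le C\kappa\Big(\textstyle\int_{\bbt}d\theta\Big)^{1-1/p}\sum_{i=0}^{k}\binom{k}{i}\,\|\partial_z^{k-i}f(t)\|_{L^p_{\theta,\nu,z}} .
\]

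Finally I would combine the two displays, re-index $j=k-i$, bound each $\|\partial_z^j f(t)\|_{L^p_{\theta,\nu,z}}\le\|f(t)\|_{W^{k,p}_{\theta,\nu,z}}$, and use $\sum_{j=0}^{k}\binom{k}{j}=2^k$; the powers of the measure of $\bbt$ collapse, producing a bound of the asserted form $2^k\pi C\kappa\,\|f(t)\|_{W^{k,p}_{\theta,\nu,z}}$. The argument is a direct Leibniz-plus-Hölder computation and needs no compactness or fixed-point machinery; the only genuinely delicate step is the numerical bookkeeping — tracking the Hölder exponents $1/p$ and $1-1/p$, the $\theta$-integration factor $\int_{\bbt}|\psi_l|^p\,d\theta$, and the binomial sum — so that the constant lands exactly on $2^k\pi C\kappa$. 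A secondary routine point is to justify differentiating the integral defining $\sigma$ under the integral sign, which follows from the assumed regularity of the global solution together with the $L^\infty$-bound on $\partial_z^i I$.
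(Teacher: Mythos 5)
Your proposal is correct and follows essentially the same route as the paper: Leibniz's rule in $z$ applied to the product $I\cdot f$ inside the coupling integral, the uniform bound $C$ on $\partial_z^i I$, H\"older/Jensen against the finite measure $g(\nu_*)d\nu_*d\theta_*$, and the binomial sum; your reformulation via $\partial_z^k\sigma(t,z)$ is just a cleaner bookkeeping of the same computation (and makes explicit that $k\ge1$ kills the $\nu$-term). The one caveat is the constant: bounding $\sum_{i}\binom{k}{i}\|\partial_z^{k-i}f\|_{L^p_{\theta,\nu,z}}$ by $2^k\|f\|_{W^{k,p}_{\theta,\nu,z}}$ together with the factor $(2\pi)^{1/p}(2\pi)^{1-1/p}=2\pi$ yields $2^{k+1}\pi C\kappa$ rather than the stated $2^k\pi C\kappa$; the paper recovers the smaller constant by keeping $p$-th powers so that $\sum_i\|\partial_z^{k-i}f\|^p_{L^p_{\theta,\nu,z}}=\|f\|^p_{W^{k,p}_{\theta,\nu,z}}$ exactly and using $\binom{k}{i}\le 2^{k-1}$ (while itself silently dropping a Jensen factor $(2\pi)^{p-1}$), so the discrepancy is a harmless factor of at most $2$ that does not affect any downstream Gr\"onwall estimate.
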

\begin{proof}
	Note that
	\begin{align}
	\begin{aligned} \label{D-6-0}
		\partial_z^k \partial_\theta^l L[f](t, \theta, z)&=-\kappa \big( \partial_\theta^l \sin\theta \big) \int_{\mathbb{T}\times\mathbb{R}}\partial_z^k\Big(I(\theta_*,z)f(t, \theta_*, \nu_*, z)\Big) g(\nu_*)d\nu_*d\theta_* \\
		&=-\kappa \big( \partial_\theta^l \sin\theta \big) \int_{\mathbb{T}\times\mathbb{R}} \bigg[ \sum_{i=0}^k\binom{k}{i}\partial_z^iI(\theta_*, z)\cdot\partial_z^{k-i}f(t, \theta_*, \nu_*, z) \bigg] g(\nu_*)d\nu_*d\theta_*,
	\end{aligned}
	\end{align}
	Then, we use \eqref{D-6-0} and Jensen's inequality to find
	\begin{align}
	\begin{aligned} \label{D-6-1}
		&|\partial_z^k \partial_\theta^l L[f](t, \theta, z)|^p \\
		& \hspace{1cm} \le \kappa^p\left(\sum_{j=0}^k\binom{k}{j}\right)^{p-1}\cdot \sum_{i=0}^k\binom{k}{i}\int_{\mathbb{T}\times\mathbb{R}}\big|\partial_z^iI(\theta_*, z)\cdot\partial_z^{k-i}f(t, \theta_*, \nu_*, z)\big|^pg(\nu_*)d\nu_*d\theta_* \\
		&\hspace{1cm} = 2^{k(p-1)} \kappa^p \sum_{i=0}^k\binom{k}{i}\int_{\mathbb{T}\times\mathbb{R}}\big|\partial_z^iI(\theta_*,z)\cdot\partial_z^{k-i}f(t, \theta_*, \nu_*, z)\big|^pg(\nu_*)d\nu_*d\theta_*.
	\end{aligned}
	\end{align}
	Now, we use \eqref{D-6-1} to obtain
	\begin{align*}
		& \|\partial_z^k \partial_\theta^l L[f](t)\|_{L^p_{\theta, \nu, z}}^p \\
		& \hspace{1cm} =\int_{\mathbb{T}\times\mathbb{R}\times[1,\infty)}|\partial_z^k \partial_\theta^l L[f](t, \theta, z)|^p\rho(z)g(\nu)dzd\nu d\theta\\
		& \hspace{1cm} \le 2^{k(p-1)} \kappa^p \sum_{i=0}^k\binom{k}{i} \\
		& \hspace{1cm} \times \int_{\mathbb{T}\times\mathbb{R}\times[1,\infty)}\int_{\mathbb{T}\times\mathbb{R}}\big|\partial_z^iI(\theta_*,z)\cdot\partial_z^{k-i}f(t, \theta_*, \nu_*,z)\big|^pg(\nu_*)\rho(z)g(\nu)d\nu_*d\theta_*dzd\nu d\theta\\
		& \hspace{1cm} \le2\pi \cdot 2^{k(p-1)} \kappa^p\sum_{i=0}^k\binom{k}{i}\|\partial_z^iI\|^p_{L^\infty_{\theta,z}}\int_{[1,\infty)}\int_{\mathbb{T}\times\mathbb{R}}|\partial_z^{k-i}f(t, \theta_*, \nu_*, z)|^pg(\nu_*)\rho(z)d\nu_*d\theta_*dz\\
		& \hspace{1cm} \le \pi \big( 2^k C\kappa \big)^p \sum_{i=0}^k\int_{\mathbb{T}\times\mathbb{R}\times[1,\infty)}|\partial_z^{k-i}f(t, \theta_*, \nu_*,z)|^p\rho(z)g(\nu_*)dzd\nu_*d\theta_*\\
		& \hspace{1cm} = \pi \big( 2^k C\kappa \big)^p \|f\|^p_{W^{k,p}_{\theta, \nu, z}}.
	\end{align*}
	This implies the desired result for finite $p$. Likewise the previous proposition, we can obtain the case of $p=\infty$.
\end{proof}
Now, we investigate the propagation of $\big\|\partial_z^k\partial_\theta^lf(t)\big\|_{L^p_{\theta,\nu,z}}$in time. For this, we introduce a notation to be used in a priori condition. For parameters
\[1\le k,\quad0\le l,\quad1<p\le\infty,\quad0<T<\infty,\]
and a global solution $f=f(t,\theta,\nu,z)$ to \eqref{D-0}-\eqref{D-1}, define a constant $M_{k,l,p}$ as follows: if $l=0$,
\begin{align}\label{def_M_0}
	M_{k,0,p}:=\max_{\substack{0\leq i\leq k-1 \\ 0\leq j\leq 1}} \sup_{0\leq t\leq T}\big\| \partial_z^i \partial_\theta^j f(t) \big\|_{L^\infty_{\theta, \nu,z}},
\end{align}
and if $l\ge1$,
\begin{align}\label{def_M}
	M_{k,l,p}:=	\max\left\{\max_{\substack{0\le i\le k-1\\ 0\le j\le l+1}}\sup_{0\le t\le T}\big\|\partial_z^i\partial_\theta^jf(t)\big\|_{L^\infty_{\theta,\nu,z}},~~\max_{0\le j\le l-1}\sup_{0\le t\le T}\big\|\partial_z^k\partial_\theta^jf(t)\big\|_{L^p_{\theta,\nu,z}}\right\}.
\end{align}
Although there is no $p$-dependency in $M_{k,0,p}$, we keep this notation for consistency with other $M_{k,l,p}$.
\begin{lemma} \label{L5.2}
Suppose that parameters satisfy
\[1\le k,\quad0\le l,\quad 1<p\le\infty,\quad0<T<\infty,\]
and let $f = f(t, \theta, \nu, z)$ be a global solution of \eqref{D-0}-\eqref{D-1} satisfying a priori conditions:
	\begin{gather*}
	 C := \max_{0\leq i\leq k} \|\partial^i_zI\|_{L^\infty_{\theta,z}} <\infty\quad\mbox{and}\quad M_{k,l,p}<\infty,
	\end{gather*}
	where $M_{k,l,p}$ is defined in \eqref{def_M_0}-\eqref{def_M}. Then, there exist two nonnegative constants $A_{k, l,p}$ and $B_{k, l, p}$ depending on $\kappa$, $C$ and $M_{k,l, p}$ such that
	\begin{align*}
		\partial_t \|\partial_z^k \partial_\theta^l f(t)\|_{L^p_{\theta,\nu,z}} \leq  A_{k, l,p} \|\partial_z^k \partial_\theta^l f(t)\|_{L^p_{\theta,\nu,z}} +B_{k,l,p}, \quad \forall~t\in[0, T].
	\end{align*}
\end{lemma}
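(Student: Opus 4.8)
The goal is a Grönwall-type differential inequality for $\|\partial_z^k\partial_\theta^l f(t)\|_{L^p_{\theta,\nu,z}}$, so the natural route is to differentiate the transport equation \eqref{D-0} in $z$ ($k$ times) and in $\theta$ ($l$ times), multiply by $|\partial_z^k\partial_\theta^l f|^{p-2}\partial_z^k\partial_\theta^l f$, integrate against $\rho(z)g(\nu)\,d\theta d\nu dz$, and bound each resulting term. First I would expand $\partial_z^k\partial_\theta^l\big(\partial_\theta(fL[f])\big)$ using the general Leibniz rule twice: once for the $z$-derivatives hitting the product $fL[f]$, and once for the $\theta$-derivatives, also moving the outer $\partial_\theta$ inside. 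This produces a ``diagonal'' term $L[f]\,\partial_\theta\big(\partial_z^k\partial_\theta^l f\big)$ plus a large but finite collection of ``lower-order'' terms of the form $\big(\partial_z^i\partial_\theta^a L[f]\big)\big(\partial_z^{k-i}\partial_\theta^{b} f\big)$ with $(i,a)\neq(0,0)$ in the relevant sense, together with one term $(\partial_\theta L[f])\,\partial_z^k\partial_\theta^l f$ coming from the outermost $\partial_\theta$ landing on $L[f]$.

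The diagonal term is handled by the standard integration-by-parts / transport trick: $\int |u|^{p-2}u\, L[f]\,\partial_\theta u\,\rho g = \tfrac1p\int L[f]\,\partial_\theta|u|^p\,\rho g = -\tfrac1p\int (\partial_\theta L[f])\,|u|^p\,\rho g$ with $u=\partial_z^k\partial_\theta^l f$, since $L[f]=\nu-\sigma(t,z)\sin\theta$ and everything is $2\pi$-periodic in $\theta$ so no boundary terms appear. Combining this with the term where $\partial_\theta$ hits $L[f]$ directly, these contribute a multiple of $\|\partial_\theta L[f]\|_{L^\infty_{\theta,z}}\|u\|_{L^p_{\theta,\nu,z}}^p$; and $\partial_\theta L[f]=-\sigma(t,z)\cos\theta$ with $|\sigma(t,z)|\le\kappa\|I\|_{L^\infty_{\theta,z}}=\kappa C$ (using $\|f(t)\|_{L^1}=1$), so this is bounded by $C\kappa\|u\|_{L^p}^p$ up to a numerical constant. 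For the $L^p$ time-derivative itself one writes $\partial_t\|u\|_{L^p}=\|u\|_{L^p}^{1-p}\int|u|^{p-2}u\,\partial_t u\,\rho g$, so dividing through by $\|u\|_{L^p}^{p-1}$ converts the $p$-th-power estimate into the desired first-order inequality; the case $p=\infty$ is recovered by the usual limiting argument (as deferred to in the earlier proofs), evaluating at the point where $|u|$ is maximal.

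The lower-order terms are the place where Lemma \ref{L5.1} and the a priori constant $M_{k,l,p}$ enter. Each such term, after Hölder's inequality with exponents $p$ and $p'$, is bounded by $\|\partial_z^i\partial_\theta^a L[f](t)\|_{L^{q_1}}\cdot\|\partial_z^{k-i}\partial_\theta^{b}f(t)\|_{L^{q_2}}\cdot\|u\|_{L^p}^{p-1}$ for an appropriate splitting; one estimates the low-order $f$-factors in $L^\infty_{\theta,\nu,z}$ (these are controlled by $M_{k,l,p}$ via \eqref{def_M_0}--\eqref{def_M}, noting the split into the two cases $l=0$ and $l\ge1$), estimates the full-order-in-$z$ but lower-order-in-$\theta$ factors $\partial_z^k\partial_\theta^b f$ with $b<l$ in $L^p$ (again bounded by $M_{k,l,p}$), and estimates the $L[f]$-factors by Lemma \ref{L5.1}, which gives $\|\partial_z^i\partial_\theta^a L[f]\|_{L^p}\le 2^i\pi C\kappa\|f\|_{W^{i,p}_{\theta,\nu,z}}$ — and $\|f\|_{W^{i,p}_{\theta,\nu,z}}$ for $i\le k$ is itself controlled by a combination of $M_{k,l,p}$ and $\|u\|_{L^p}$. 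Collecting: every lower-order term is bounded either by (constant)$\cdot\|u\|_{L^p}^{p-1}\cdot\|u\|_{L^p}$, feeding the $A_{k,l,p}$-coefficient, or by (constant)$\cdot\|u\|_{L^p}^{p-1}$, which after dividing by $\|u\|_{L^p}^{p-1}$ becomes the inhomogeneous constant $B_{k,l,p}$; both constants depend only on $\kappa$, $C$, and $M_{k,l,p}$, and in particular on $k,l,p$ through the binomial and numerical factors. The main obstacle is purely bookkeeping: organizing the double Leibniz expansion so that one can cleanly certify that in each lower-order term at least one factor is ``strictly lower order'' in the precise sense required for it to be absorbed into $M_{k,l,p}$ rather than back into $\|u\|_{L^p}$, and verifying that no term of full order in both $z$ and $\theta$ survives outside the diagonal/transport term. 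Given the length of this combinatorial accounting — entirely analogous to the proof of Proposition \ref{P5.1} — I would present the key terms explicitly and relegate the exhaustive enumeration to an appendix, exactly as is done for Proposition \ref{P5.1}.
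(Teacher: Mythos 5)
Your proposal is correct and follows essentially the same route as the paper's proof in Appendix B: a double Leibniz expansion of $\partial_z^k\partial_\theta^{l+1}(fL[f])$, the transport/integration-by-parts trick for the diagonal term $L[f]\,\partial_\theta(\partial_z^k\partial_\theta^l f)$ combined with the $(l+1)\partial_z^k\partial_\theta^l f\cdot\partial_\theta L[f]$ term, and Hölder plus Lemma \ref{L5.2}'s a priori constant $M_{k,l,p}$ together with Lemma \ref{L5.1} for the lower-order terms, with the $p=\infty$ case obtained by a limiting argument. The bookkeeping you defer is exactly what the paper carries out in its Cases A ($l=0$) and B ($l\ge 1$), including the observation that $\|f\|_{W^{k-i,p}_{\theta,\nu,z}}$ is absorbed into $M_{k,l,p}$ and $\|\partial_z^k\partial_\theta^l f\|_{L^p_{\theta,\nu,z}}$.
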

\begin{proof}
Since the proof is lengthy, we leave it in Appendix \ref{Proof_L5.2}.
\end{proof}
\begin{remark}\label{R5.1}
	If one wishes to verify the finiteness of $\big\|\partial_z^k\partial_\theta^lf(t)\big\|_{L^p_{\theta,\nu,z}}$ in the bounded time interval $[0, T]$, Lemma \ref{L5.2} can be summarized as follows: if $M_{k,l,p}$ is finite, then by Gr\"onwall's Lemma,
	\[\sup_{0\le t\le T}\big\|\partial_z^k\partial_\theta^lf(t)\big\|_{L^p_{\theta,\nu,z}}<\infty,\quad\mbox{for any}\quad 0<T<\infty.\]
\end{remark}
Here, we give the last preparatory lemma.
\begin{lemma}\label{L5.3}
	Suppose that parameters and initial datum satisfy
	\begin{gather*}
		1\le k,\quad0<T<\infty,\quad C:=\max_{0\le l\le k}\big\|\partial_z^lI\big\|_{L^\infty_{\theta,z}}<\infty,\\
		\partial_\theta^jf^{\mathrm{in}}\in L^\infty_{\theta,\nu,z}(\mathbb T\times\mathbb R\times[1,\infty)),\quad\forall j~\in\left\{0\right\}\cup[k],
	\end{gather*}
	and let  $f=f(t,\theta,\nu,z)$ be a global solution to \eqref{D-0}-\eqref{D-1}. Then, one has
	\[M_{i,0,\infty}<\infty,\quad\forall~i=1,\cdots,k-1.\]
\end{lemma}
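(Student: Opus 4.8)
The plan is to deduce everything from Proposition~\ref{P5.1}, Lemma~\ref{L5.2}, and the Gr\"onwall consequence recorded in Remark~\ref{R5.1}, arranged as a double induction on the orders of the $z$- and $\theta$-derivatives. Unwinding the definition~\eqref{def_M_0}, the desired bound ``$M_{i,0,\infty}<\infty$ for $i=1,\dots,k-1$'' is equivalent to the finiteness of $\sup_{0\le t\le T}\big\|\partial_z^m\partial_\theta^j f(t)\big\|_{L^\infty_{\theta,\nu,z}}$ for all $0\le m\le k-2$ and $j\in\{0,1\}$. I would therefore prove the slightly stronger statement
\[
(\star_m):\quad \sup_{0\le t\le T}\big\|\partial_z^m\partial_\theta^j f(t)\big\|_{L^\infty_{\theta,\nu,z}}<\infty \quad\text{for every } 0\le j\le k-m,
\]
for $m=0,1,\dots,k-2$, by induction on $m$; this particular cutoff for $j$ is exactly what is consumed when one passes from level $m-1$ to level $m$, so it is the natural induction hypothesis. (When $k=1$ the index set $\{1,\dots,k-1\}$ is empty and there is nothing to prove.)

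For the base case $m=0$, Proposition~\ref{P5.1} with $p=\infty$ gives $(\star_0)$ at once: $\partial_\theta^j f^{\mathrm{in}}\in L^\infty_{\theta,\nu,z}$ for $0\le j\le k$ by hypothesis, $\|I\|_{L^\infty_{\theta,z}}\le C<\infty$, and hence $\|\partial_\theta^j f(t)\|_{L^\infty_{\theta,\nu,z}}$ is bounded on $[0,T]$ by $\Lambda_{0,j}e^{(j+1)\|I\|_{L^\infty_{\theta,z}}\kappa T}\sum_{l=0}^j\|\partial_\theta^l f^{\mathrm{in}}\|_{L^\infty_{\theta,\nu,z}}$. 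For the inductive step, fix $1\le m\le k-2$ and assume $(\star_{m'})$ for all $m'<m$; I then run an inner induction on $j=0,1,\dots,k-m$. Given $(\star_{m'})$ for $m'<m$ together with $\sup_{0\le t\le T}\|\partial_z^m\partial_\theta^{j'}f(t)\|_{L^\infty_{\theta,\nu,z}}<\infty$ for all $j'<j$, inspect $M_{m,j,\infty}$ as defined in \eqref{def_M_0}--\eqref{def_M}: its constituent quantities are $\sup_t\|\partial_z^{i}\partial_\theta^{j'}f\|_{L^\infty_{\theta,\nu,z}}$ with $i\le m-1$ and $j'\le j+1$ (here $j'\le j+1\le k-m+1\le k-i$, so these are covered by $(\star_{m'})$) together with $\sup_t\|\partial_z^m\partial_\theta^{j'}f\|_{L^\infty_{\theta,\nu,z}}$ for $j'\le j-1$ (covered by the inner induction hypothesis). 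Hence $M_{m,j,\infty}<\infty$; moreover $\max_{0\le i\le m}\|\partial_z^i I\|_{L^\infty_{\theta,z}}\le C<\infty$ since $m\le k$. Lemma~\ref{L5.2} applied with $(k,l,p)=(m,j,\infty)$ then produces a differential inequality $\partial_t\|\partial_z^m\partial_\theta^j f(t)\|_{L^\infty_{\theta,\nu,z}}\le A_{m,j,\infty}\|\partial_z^m\partial_\theta^j f(t)\|_{L^\infty_{\theta,\nu,z}}+B_{m,j,\infty}$ on $[0,T]$, and Remark~\ref{R5.1} upgrades it to $\sup_{0\le t\le T}\|\partial_z^m\partial_\theta^j f(t)\|_{L^\infty_{\theta,\nu,z}}<\infty$. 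This closes the inner induction, hence $(\star_m)$, hence the outer induction.

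To finish, for $i=1,\dots,k-1$ one has $M_{i,0,\infty}=\max_{0\le m\le i-1,\ 0\le j\le1}\sup_{0\le t\le T}\|\partial_z^m\partial_\theta^j f(t)\|_{L^\infty_{\theta,\nu,z}}$, and every index here satisfies $m\le i-1\le k-2$ and $j\le1\le k-m$, so each term is finite by $(\star_m)$; therefore $M_{i,0,\infty}<\infty$. The only genuine work is the index bookkeeping: one must check that the mixed derivatives entering $M_{m,j,\infty}$ always precede $(m,j)$ in the chosen induction order \emph{and} fall within the range already guaranteed at the earlier level — equivalently, that the indices ``close up'', which is precisely why $(\star_m)$ is phrased with the cutoff $0\le j\le k-m$. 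No estimate beyond Proposition~\ref{P5.1}, Lemma~\ref{L5.2} and Remark~\ref{R5.1} is needed.
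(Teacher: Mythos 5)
Your proof is correct and follows essentially the same route as the paper: both arguments rest on Proposition \ref{P5.1} for the pure $\theta$-derivatives, Lemma \ref{L5.2} plus the Gr\"onwall step of Remark \ref{R5.1} for each new mixed derivative, and the same triangular index bookkeeping — your induction hypothesis $(\star_m)$ with cutoff $0\le j\le k-m$ is exactly the paper's condition $\max_{1\le i\le k}\max_{0\le j\le k-i}M_{i,j,\infty}<\infty$ depicted in its staircase diagram. The only difference is organizational: the paper first reduces everything to the chain $M_{1,j,\infty}$, $j=0,\dots,k-1$, and inducts on $j$ there, whereas you run an explicit double induction on $(m,j)$; the content is the same.
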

\begin{proof}
	First, we consider the condition for $M_{i,0,\infty}<\infty$ with $i=2,\cdots,k$. From the definition of $M_{i,0,\infty}$ in \eqref{def_M_0} and Remark \ref{R5.1}, the finiteness of $M_{i,0,\infty}$ is guaranteed if
	\begin{align}\label{step1}
		\max_{\substack{0\le \tilde i\le i-1\\0\le j\le 1}}M_{\tilde i,j,\infty}=M_{i-1,1,\infty}<\infty.
	\end{align}
	On the other hand, the condition for $M_{i,l,\infty}<\infty$ with $l\ge1$ can be obtained, again by Remark \ref{R5.1} and the definition of constants $M_{i,l,\infty}$, as follows:
	\[\max_{\substack{0\le \tilde i\le i-1\\0\le j\le l+1}}M_{\tilde i,j,\infty}<\infty\quad\mbox{and}\quad\max_{0\le j\le l-1}M_{i,j,\infty}<\infty,\]
	that is,
	\begin{align}\label{step2}
		M_{i-1,l+1,\infty}<\infty\quad\mbox{and}\quad M_{i,l-1,\infty}<\infty.
	\end{align}
	\begin{figure}
		\mbox{
			\begin{tikzpicture}[scale=0.55] 
				\footnotesize

				\def\blockwidth{2.4}
				\def\blockheight{1.8}

				\draw[draw=none] (3*\blockwidth, 3*\blockheight) rectangle ++(\blockwidth, \blockheight);
				\draw[draw=none] (3*\blockwidth, 4*\blockheight) rectangle ++(\blockwidth, \blockheight);
				\draw[draw=none] (4*\blockwidth, 4*\blockheight) rectangle ++(\blockwidth, \blockheight);

				\foreach \i in {1,...,2} {
					\draw[fill={rgb,255:red,255; green,250; blue,168}] (\blockwidth, \i*\blockheight) rectangle ++(\blockwidth, \blockheight);
					\draw (2*\blockwidth+\i*\blockwidth, 2*\blockheight+\i*\blockheight) -- (3*\blockwidth+\i*\blockwidth, 2*\blockheight+\i*\blockheight);
					\draw (3*\blockwidth+\i*\blockwidth, 2*\blockheight+\i*\blockheight) -- (3*\blockwidth+\i*\blockwidth, 3*\blockheight+\i*\blockheight);
				}

				\draw(2*\blockwidth, 5*\blockheight) rectangle ++(\blockwidth, \blockheight);
				\draw (3*\blockwidth, 5*\blockheight) rectangle ++(2*\blockwidth, \blockheight);
				\draw(5*\blockwidth, 5*\blockheight) rectangle ++(\blockwidth, \blockheight);


				\draw[fill={rgb,255:red,255; green,250; blue,168}] (\blockwidth, 5*\blockheight) rectangle ++(\blockwidth, \blockheight);
				\draw[fill={rgb,255:red,255; green,250; blue,168}] (1*\blockwidth, 3*\blockheight) rectangle ++(\blockwidth, 2*\blockheight);
				\draw(2*\blockwidth, 3*\blockheight) rectangle ++(\blockwidth, 2*\blockheight);
				\draw(2*\blockwidth, 2*\blockheight) rectangle ++(\blockwidth, \blockheight);
				\node at (1*\blockwidth+0.5*\blockwidth, 1*\blockheight+0.5*\blockheight) {$\partial_z^k f$};
				\node at (1*\blockwidth+0.5*\blockwidth, 2*\blockheight+0.5*\blockheight) {$\partial_z^{k-1} f$};
				\node at (2*\blockwidth+0.5*\blockwidth, 2*\blockheight+0.5*\blockheight) {$\partial_z^{k-1} \partial_\theta f$};
				\node at (1*\blockwidth+0.5*\blockwidth, 5*\blockheight+0.5*\blockheight) {$\partial_z f$};
				\node at (2*\blockwidth+0.5*\blockwidth, 5*\blockheight+0.5*\blockheight) {$\partial_z\partial_\theta f$};
				\node at (5*\blockwidth+0.5*\blockwidth, 5*\blockheight+0.5*\blockheight) {$\partial_z\partial_\theta^{k-1} f$};
				\node at (1*\blockwidth+0.5*\blockwidth, 4*\blockheight) {$\vdots$};
				\node at (2*\blockwidth+0.5*\blockwidth, 4*\blockheight) {$\vdots$};
				\node at (4*\blockwidth, 5*\blockheight+0.5*\blockheight) {$\cdots$};
				\node at (3*\blockwidth+0.5*\blockwidth, 3*\blockheight+0.5*\blockheight) {$\cdots$};
				\node at (4*\blockwidth+0.5*\blockwidth, 4*\blockheight+0.5*\blockheight) {$\cdots$};
			\end{tikzpicture}\hspace{.5cm}
			\begin{tikzpicture}[scale=0.55] 
				\footnotesize

				\def\blockwidth{2.4}
				\def\blockheight{1.8}

				\draw[fill={rgb,255:red,251; green,235; blue,232}, draw=none] (3*\blockwidth, 3*\blockheight) rectangle ++(\blockwidth, \blockheight);
				\draw[fill={rgb,255:red,251; green,235; blue,232}, draw=none] (3*\blockwidth, 4*\blockheight) rectangle ++(\blockwidth, \blockheight);
				\draw[fill={rgb,255:red,251; green,235; blue,232}, draw=none] (4*\blockwidth, 4*\blockheight) rectangle ++(\blockwidth, \blockheight);

				\foreach \i in {1,...,2} {
					\draw[fill={rgb,255:red,251; green,235; blue,232}] (\blockwidth, \i*\blockheight) rectangle ++(\blockwidth, \blockheight);
					\draw (2*\blockwidth+\i*\blockwidth, 2*\blockheight+\i*\blockheight) -- (3*\blockwidth+\i*\blockwidth, 2*\blockheight+\i*\blockheight);
					\draw (3*\blockwidth+\i*\blockwidth, 2*\blockheight+\i*\blockheight) -- (3*\blockwidth+\i*\blockwidth, 3*\blockheight+\i*\blockheight);
				}

				\draw[fill={rgb,255:red,249; green,195; blue,210}] (2*\blockwidth, 5*\blockheight) rectangle ++(\blockwidth, \blockheight);
				\draw[fill={rgb,255:red,249; green,195; blue,210}] (3*\blockwidth, 5*\blockheight) rectangle ++(2*\blockwidth, \blockheight);
				\draw[fill={rgb,255:red,249; green,195; blue,210}] (5*\blockwidth, 5*\blockheight) rectangle ++(\blockwidth, \blockheight);


				\draw[fill={rgb,255:red,249; green,195; blue,210}] (\blockwidth, 5*\blockheight) rectangle ++(\blockwidth, \blockheight);
				\draw[fill={rgb,255:red,251; green,235; blue,232}] (1*\blockwidth, 3*\blockheight) rectangle ++(\blockwidth, 2*\blockheight);
				\draw[fill={rgb,255:red,251; green,235; blue,232}] (2*\blockwidth, 3*\blockheight) rectangle ++(\blockwidth, 2*\blockheight);
				\draw[fill={rgb,255:red,251; green,235; blue,232}] (2*\blockwidth, 2*\blockheight) rectangle ++(\blockwidth, \blockheight);
				\node at (1*\blockwidth+0.5*\blockwidth, 1*\blockheight+0.5*\blockheight) {$\partial_z^k f$};
				\node at (1*\blockwidth+0.5*\blockwidth, 2*\blockheight+0.5*\blockheight) {$\partial_z^{k-1} f$};
				\node at (2*\blockwidth+0.5*\blockwidth, 2*\blockheight+0.5*\blockheight) {$\partial_z^{k-1} \partial_\theta f$};
				\node at (1*\blockwidth+0.5*\blockwidth, 5*\blockheight+0.5*\blockheight) {$\partial_z f$};
				\node at (2*\blockwidth+0.5*\blockwidth, 5*\blockheight+0.5*\blockheight) {$\partial_z\partial_\theta f$};
				\node at (5*\blockwidth+0.5*\blockwidth, 5*\blockheight+0.5*\blockheight) {$\partial_z\partial_\theta^{k-1} f$};
				\node at (1*\blockwidth+0.5*\blockwidth, 4*\blockheight) {$\vdots$};
				\node at (2*\blockwidth+0.5*\blockwidth, 4*\blockheight) {$\vdots$};
				\node at (4*\blockwidth, 5*\blockheight+0.5*\blockheight) {$\cdots$};
				\node at (3*\blockwidth+0.5*\blockwidth, 3*\blockheight+0.5*\blockheight) {$\cdots$};
				\node at (4*\blockwidth+0.5*\blockwidth, 4*\blockheight+0.5*\blockheight) {$\cdots$};
			\end{tikzpicture}
}
	\caption{Structure of estimates in Lemma \ref{L5.3}: In the left diagram, the aim of Lemma \ref{L5.3} is colored in yellow. The right diagram describes the step-shape strategy for estimates. By \eqref{step1} and \eqref{step2}, it suffices to show the dark pink ones for whole pink area.}
	\label{Fig_step}
	\end{figure}
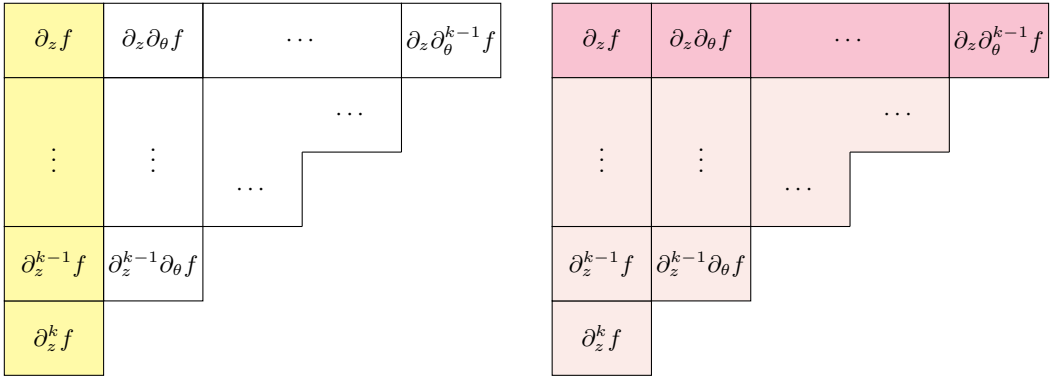
	From \eqref{step1} and \eqref{step2}, one can notice that to show
	\[M_{i,0,\infty}<\infty,\quad\mbox{for all}\quad i=2,\cdots,k,\]
	the following is needed: (see Figure \ref{Fig_step})
	\begin{align*}
		\max_{1\le i\le k}\max_{0\le j\le k-i}M_{i,j,\infty}<\infty,
	\end{align*}
	which is equivalent to
	\[M_{1,j,\infty}<\infty,\quad\forall~j=0,\cdots,k-1.\]
	By Proposition \ref{P5.1}, one has
	\[\sup_{0\le t\le T}\big\|\partial_\theta^jf(t)\big\|_{L^\infty_{\theta,\nu,z}}<\infty,\quad\forall~0\le j\le k-1,\]
	hence,
	\begin{align}\label{M_0_finite}
		M_{1,0,\infty}<\infty.
	\end{align}
	As mentioned in Remark \ref{R5.1}, we have
	\[\sup_{0\le t\le T}\big\|\partial_zf(t)\big\|_{L^\infty_{\theta,\nu,z}}<\infty\]
	by Lemma \ref{L5.2} and Gr\"onwall's Lemma. Here, we give a detail: By Lemma \ref{L5.2} with \eqref{M_0_finite}, we have two nonnegative constants
	\[A_{1,0,\infty}=A_{1,0,\infty}(T,C,\kappa,f^{\mathrm{in}})\quad\mbox{and}\quad B_{1,0,\infty}=B_{1,0,\infty}(T,C,\kappa,f^{\mathrm{in}})\]
	such that
	\[\partial_t\big\|\partial_zf(t)\big\|_{L^\infty_{\theta,\nu,z}}\le A_{1,0,\infty}\big\|\partial_zf(t)\big\|_{L^\infty_{\theta,\nu,z}}+B_{1,0,\infty},\quad\forall~t\in[0,T].\]
	Then, Gr\"onwall's Lemma yields
	\[\big\|\partial_zf(t)\big\|_{L^\infty_{\theta,\nu,z}}\le \frac{B_{1,0,\infty}}{A_{1,0,\infty}}\left(e^{A_{1,0,\infty}t}-1\right),\quad\forall~t\in[0,T].\]
	This leads to
	\[\sup_{0\le t\le T}\big\|\partial_zf(t)\big\|_{L^\infty_{\theta,\nu,z}}<\infty.\]
	From the definition of $M_{k,l,\infty}$ in \eqref{def_M} for $l\ge1$, one has
	\begin{align*}
		M_{1,l,\infty}&:=\max\left\{\max_{0\le j\le l+1}\sup_{0\le t\le T}\big\|\partial_\theta^jf(t)\big\|_{L^\infty_{\theta,\nu,z}},~~\max_{0\le j\le l-1}\sup_{0\le t\le T}\big\|\partial_z\partial_\theta^jf(t)\big\|_{L^\infty_{\theta,\nu,z}}\right\},
	\end{align*}
	which implies by Remark \ref{R5.1} that $M_{1,l,\infty}$ is finite if
	\begin{align}\label{cond_M1}
		\max_{0\le j\le l+1}\sup_{0\le t\le T}\big\|\partial_\theta^jf(t)\big\|_{L^\infty_{\theta,\nu,z}}<\infty\quad\mbox{and}\quad M_{1,l-1,\infty}<\infty.
	\end{align}
	Note that the first condition in \eqref{cond_M1} holds for all $l\ge0$ by Proposition \ref{P5.1}. Since we have $M_{1,0,\infty}<\infty$ in \eqref{M_0_finite}, we can obtain $M_{1,l,\infty}<\infty$ for all $1\le l\le k-1$, inductively. This completes the desired estimate.
\end{proof}
Now, we are ready to provide our second main result on the propagation of regularity of the norm $\|f\|_{W^{k,p}_{\theta, \nu, z}}$.
\begin{theorem} \label{T5.1}
Suppose that parameters and initial datum satisfy
\begin{gather*}
	1\le k,\quad1<p\le\infty, \quad0<T<\infty, \\
	C:= \max_{0\leq l\leq k} \|\partial^l_zI\|_{L^\infty_{\theta,z}} <\infty, \quad  \partial_\theta^j f^{\mathrm{\mathrm{in}}}\in L^\infty_{\theta, \nu, z}, \quad \forall~~j\in\left\{0\right\}\cup[k],
\end{gather*}
and let $f = f(t, \theta, \nu, z)$ be a global solution to \eqref{D-0}-\eqref{D-1}. Then, there exist constants $\Lambda_{1,k} = \Lambda_{1,k}(T, C, \kappa, f^{\mathrm{\mathrm{in}}})$ and $\Lambda_{2,k} = \Lambda_{2,k}(T, C, \kappa, f^{\mathrm{\mathrm{in}}})$ such that
\begin{align*} 
		\|f\|_{W^{k,p}_{\theta, \nu, z}} \leq \|f^{\mathrm{\mathrm{in}}}\|_{L^p_{\theta, \nu, z}} e^{\Lambda_{1,k}t} +\frac{\Lambda_{2,k}}{\Lambda_{1,k}} (e^{\Lambda_{1,k}t} -1), \quad \forall~t\in[0, T].
\end{align*}
\end{theorem}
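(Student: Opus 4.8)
The plan is to estimate $\|f\|_{W^{k,p}_{\theta,\nu,z}}^p=\sum_{l=0}^{k}\|\partial_z^l f(t)\|_{L^p_{\theta,\nu,z}}^p$ one $z$-derivative mode at a time, and then to merge the $k+1$ resulting Gr\"onwall bounds into a single estimate with a common rate. Since the claimed inequality at $t=0$ reads $\|f^{\mathrm{in}}\|_{W^{k,p}_{\theta,\nu,z}}\le\|f^{\mathrm{in}}\|_{L^p_{\theta,\nu,z}}$, I would work (as is natural when only the coupling carries uncertainty) with $z$-independent initial data, so that $\partial_z^l f^{\mathrm{in}}\equiv 0$ for $1\le l\le k$; in the general case one simply carries the extra contributions $\|\partial_z^l f^{\mathrm{in}}\|_{L^p_{\theta,\nu,z}}$ through the argument. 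The zeroth mode is immediate: Proposition \ref{P5.1} with $k=0$ gives $\|f(t)\|_{L^p_{\theta,\nu,z}}\le e^{C\kappa t}\|f^{\mathrm{in}}\|_{L^p_{\theta,\nu,z}}$ for $t\ge0$ (the measure $\rho(z)g(\nu)\,d\theta d\nu dz$ has finite total mass, so $f^{\mathrm{in}}\in L^\infty_{\theta,\nu,z}\subset L^p_{\theta,\nu,z}$).

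For $1\le l\le k$, I would apply Lemma \ref{L5.2} with its index pair $(k,l)$ specialized to $(l,0)$, which delivers nonnegative constants $A_{l,0,p},B_{l,0,p}$ — depending only on $\kappa$, $C$, and $M_{l,0,p}$ — with $\partial_t\|\partial_z^l f(t)\|_{L^p_{\theta,\nu,z}}\le A_{l,0,p}\|\partial_z^l f(t)\|_{L^p_{\theta,\nu,z}}+B_{l,0,p}$ on $[0,T]$. The two a priori conditions of Lemma \ref{L5.2} are met: $\max_{0\le i\le l}\|\partial_z^i I\|_{L^\infty_{\theta,z}}\le C<\infty$ is part of the hypothesis, and the finiteness of $M_{l,0,p}=\max_{0\le i\le l-1,\,0\le j\le1}\sup_{[0,T]}\|\partial_z^i\partial_\theta^j f(t)\|_{L^\infty_{\theta,\nu,z}}$ (which carries no $p$-dependence) follows from Proposition \ref{P5.1} at $p=\infty$ for the $i=0$ slices and from Lemma \ref{L5.3} for the $1\le i\le l-1$ slices — more precisely from the mixed bounds $\sup_{[0,T]}\|\partial_z^i\partial_\theta^j f(t)\|_{L^\infty_{\theta,\nu,z}}<\infty$, $i\ge1$, $i+j\le k$, produced by the step-shaped induction inside the proof of Lemma \ref{L5.3}. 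With $\partial_z^l f(0)\equiv0$, Gr\"onwall's lemma then yields $\|\partial_z^l f(t)\|_{L^p_{\theta,\nu,z}}\le \frac{B_{l,0,p}}{A_{l,0,p}}\big(e^{A_{l,0,p}t}-1\big)$, read as $B_{l,0,p}t$ if $A_{l,0,p}=0$.

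To assemble, set $\Lambda_{1,k}:=\max\{C\kappa,\ \max_{1\le l\le k}A_{l,0,p}\}$. Since $s\mapsto e^{st}$ and $s\mapsto e^{st}-1$ are nondecreasing for $t\ge0$, the $l=0$ and $1\le l\le k$ bounds combine to
\begin{align*}
\|f(t)\|_{W^{k,p}_{\theta,\nu,z}}^p
&= \sum_{l=0}^k\|\partial_z^l f(t)\|_{L^p_{\theta,\nu,z}}^p \\
&\le \|f^{\mathrm{in}}\|_{L^p_{\theta,\nu,z}}^p\, e^{p\Lambda_{1,k}t}
+\Big(\sum_{l=1}^k\big(\tfrac{B_{l,0,p}}{A_{l,0,p}}\big)^p\Big)\big(e^{\Lambda_{1,k}t}-1\big)^p ,
\end{align*}
(with the obvious modification for the indices $l$ with $A_{l,0,p}=0$, using $\Lambda_{1,k}t\le e^{\Lambda_{1,k}t}-1$). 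Taking $p$-th roots and using the subadditivity of $x\mapsto x^{1/p}$ ($p\ge1$) gives the stated bound with $\Lambda_{2,k}:=\Lambda_{1,k}\big(\sum_{l=1}^k(B_{l,0,p}/A_{l,0,p})^p\big)^{1/p}$, so that $\Lambda_{2,k}/\Lambda_{1,k}$ is exactly the coefficient of $e^{\Lambda_{1,k}t}-1$. The case $p=\infty$ is identical with the sums over $l$ replaced by maxima, using $\|f\|_{W^{k,\infty}_{\theta,\nu,z}}=\max_{0\le l\le k}\|\partial_z^l f\|_{L^\infty_{\theta,\nu,z}}$.

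The analytic substance is already discharged in Proposition \ref{P5.1} and Lemmas \ref{L5.1}--\ref{L5.3}; within the present proof the only delicate point is certifying the a priori constants $M_{l,0,p}$, i.e.\ running the finiteness cascade for the mixed $L^\infty$-bounds $\|\partial_z^i\partial_\theta^j f(t)\|_{L^\infty_{\theta,\nu,z}}$ that feed Lemma \ref{L5.2} at each level. Here one must use Lemma \ref{L5.3} at the full strength of its proof — its step-shaped induction producing the whole triangle $\{i\ge1,\,j\ge0,\,i+j\le k\}$, not merely its stated conclusion — and verify that the induction closes: each application of Lemma \ref{L5.2} consumes lower $z$-order derivatives together with one extra $\theta$-order, all of which are supplied either by Proposition \ref{P5.1} or by previously settled levels. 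The remainder — dominating all per-mode exponential rates by a single $\Lambda_{1,k}$ and folding the forcing terms into $\Lambda_{2,k}$ — is routine bookkeeping.
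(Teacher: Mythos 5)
Your proposal is correct and follows essentially the same route as the paper: both reduce the claim to Proposition \ref{P5.1} for the zeroth mode and to Lemma \ref{L5.2} applied at the index pairs $(i,0)$, $1\le i\le k$, with the a priori constants $M_{i,0,p}=M_{i,0,\infty}$ certified by (the cascade inside) Lemma \ref{L5.3}, and then define $\Lambda_{1,k}$ as the maximum of $C\kappa$ and the rates $A_{i,0,p}$. The only difference is cosmetic --- you apply Gr\"onwall mode by mode and then take the $\ell^p$ combination, whereas the paper sums the differential inequalities into a single one for $\|f\|_{W^{k,p}_{\theta,\nu,z}}$ and applies Gr\"onwall once --- and your remark that the stated bound at $t=0$ implicitly requires $z$-independent initial data (otherwise $\|f^{\mathrm{in}}\|_{L^p_{\theta,\nu,z}}$ should be $\|f^{\mathrm{in}}\|_{W^{k,p}_{\theta,\nu,z}}$) is a fair observation the paper glosses over.
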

\begin{proof}
By Lemma \ref{L5.3}, we have
\[M_{i,0,\infty}<\infty,\quad\forall~i=1,\cdots,k.\]
Then, it follows Lemma \ref{L5.2} and the fact of
\begin{align}\label{p_indep}
	M_{i,0,p}=M_{i,0,\infty},\quad\forall~i\ge1,\quad1<p<\infty,
\end{align}
that there exist nonnegative constants $A_{i,0,p}$ and $B_{i,0,p}$ for all $i=1,\cdots,k$ and $1<p<\infty$ such that
\begin{align}\label{T5.1P1}
	\partial_t\big\|\partial_z^if(t)\big\|_{L^p_{\theta,\nu,z}}\le A_{i,0,p}\big\|\partial_z^if(t)\big\|_{L^p_{\theta,\nu,z}}+B_{i,0,p},\quad\forall~t\in[0,T].
\end{align}
On the other hand, recall that we derived
\begin{align}\label{T5.1P2}
	\partial_t\big\|f(t)\big\|_{L^p_{\theta,\nu,z}}\le C\kappa\big\|f(t)\big\|_{L^p_{\theta,\nu,z}},\quad\forall~t\ge0,\quad1<p\le\infty
\end{align}
in the proof of Proposition \ref{P5.1}. With \eqref{T5.1P1}, \eqref{T5.1P2} and H\"older's inequality, we conclude that
\begin{align*}
		\partial_t\|f\|_{W^{k,p}_{\theta, \nu, z}}^p &\leq pC\kappa \| f\|_{L^p_{\theta, \nu, z}}^p +p\sum_{i=1}^k A_{i,0,p} \|\partial_z^i f\|_{L^p_{\theta, \nu, z}}^p +p\sum_{i=1}^k B_{i,0,p} \|\partial_z^i f\|_{L^p_{\theta, \nu, z}}^{p-1} \\
		&\leq p\max\Big\{ C\kappa, \max_{1\leq i\leq k} A_{i, 0,p} \Big\} \|f\|_{W^{k,p}_{\theta, \nu, z}}^p +p\|f\|_{W^{k,p}_{\theta, \nu, z}}^{p-1} \bigg( \sum_{i=1}^k B_{i, 0,p}^p \bigg)^{1/p} \\
		&\leq p\max\Big\{ C\kappa, \max_{1\leq i\leq k} A_{i, 0,p} \Big\} \|f\|_{W^{k,p}_{\theta, \nu, z}}^p +p\|f\|_{W^{k,p}_{\theta, \nu, z}}^{p-1} \bigg( \sum_{i=1}^k B_{i, 0,p} \bigg).
\end{align*}
We set
\[\Lambda_{1,k}:=\max\Big\{ C\kappa, \max_{1\leq i\leq k} A_{i, 0,p} \Big\}\quad\mbox{and}\quad\Lambda_{2,k}:=\sum_{i=1}^k B_{i, 0,p} ,\]
and note that $\Lambda_{1,k}$ and $\Lambda_{2,k}$ do not depend on $p$ by \eqref{p_indep}. By Gr\"onwall's Lemma, we can get the desired estimate for $p<\infty$. For the case of $p=\infty$, we take the limit of $p\to\infty$ to obtain the desired estimate.
\end{proof}
\section{Numerical results} \label{sec:6}
\setcounter{equation}{0}
In this section, we present several numerical tests on the uncertainty propagation for the Winfree model \eqref{A-6}.  For all numerical tests, we use the recently developed class of Monte Carlo stochastic Galerkin (sG) particle methods for kinetic equations. For interested readers,  we refer to \cite{CCP,CZ,PZ} for a brief introduction.

\subsection{Test 1. Consistency of the mean-field limit in the uncertain case}
A sG reformulation of the kinetic Winfree model is based on the generalized polynomial chaos expansion of the solution of the kinetic Winfree model $f(t, \theta, z)$ of the form:
\begin{align}\label{eq:kinetic_SG}
f(t, \theta, \nu, z) \approx f^M(t, \theta, \nu, z) = \sum_{k=0}^M \hat{f}_k(t, \theta, \nu) \Phi_k(z),
\end{align}
where $\{\Phi_k\}_{k=0}^M$ is a set of orthonormal polynomials such that
\[
\mathbb E[\Phi_h(z)\Phi_k(z)] = \int_{[1,+\infty)}\Phi_h(z)\Phi_k(z)\rho(z)dz = \delta_{hk}.
\]
Here $\delta_{hk}$ is the Kronecker delta function. In \eqref{eq:kinetic_SG}, we indicated with $\hat{f}_k(\theta,t)$ the projection
\begin{align}
\label{eq:fk}
\hat{f}_k(t, \theta, \nu) = \int_{[1,+\infty)}f(t, \theta, \nu, z)\Phi_k(z)\rho(z)dz.
\end{align}
We substitute the approximation $f^M$ into the kinetic Winfree model with uncertainties, and we obtain
\[
\partial_t f^M  + \partial_\theta \Big(f^M\mathcal L[f^M] \Big) = 0,
\]
where $\mathcal L[\cdot]$ has been defined in \eqref{A-7}. We represent $f^M$ by means of the above expansion and multiply the equation by a polynomial $\Phi_h(z)$ of the orthonormal basis, and we yield the following coupled system of $M+1$ deterministic partial differential equations determining the evolution of the projections $\{\hat f_h\} $ of $f$ onto the polynomial space:
\begin{align}
\label{eq:evo_fh}
\partial_t \hat{f}_h(t,\theta) +\partial_\theta \sum_{k=0}^M L_{hk}[f^M](t,\theta) \hat{f}_k(t,\theta), \qquad h = 0,\dots,M,
\end{align}
where
\[
L_{hk}[f^M](t,\theta) :=  \int_{[1,+\infty)} \Big ( \nu - \sigma(t,z)\sin\theta \Big)\Phi_h(z)\Phi_k(z)\rho(z)dz,
\]
with $\sigma(t,z)$ as in \eqref{A-7}. The approximation of the statistical quantities of interest are defined in terms of the introduced projections. In particular, it follows from \eqref{eq:fk}  that we have
\[
\mathbb E[f(t, \theta, \nu, z)] \approx \hat{f}_0(t, \theta, \nu),
\]
and its evolution is approximated by \eqref{eq:evo_fh}. Thanks to the orthonormality of the polynomial basis,  we also have
\[
\textrm{Var}_z[f(t, \theta, \nu, z)] \approx \mathbb E\left[ \left( \sum_{h=0}^M \hat{f}_h(t, \theta, \nu)\Phi_h(z) - \hat{f}_0(t, \theta, \nu)\right)^2 \right] = \sum_{h=0}^ M \hat f_h^2 - \hat f_0^2(t, \theta, \nu).
\]
Amongst the most importance advantages of the gPC-sG methods, is their exponential convergence with respect to the uncertain variable, see \cite{X}. The consistency of averages quantities can be also compared with the reconstruction of the particle density in the presence of uncertainties. \textcolor{black}{The computational cost of the introduced particle sG approach is $\mathcal O(M^2 N^2)$ and can be mitigated through a random batch approach \cite{AP,Jin} as detailed in \cite{CPZ,CZ}. }\newline

\textcolor{black}{From \eqref{A-6} we may observe how the uncertainties impact on the dynamics of each particle which become dependent by all the possible realizations of the considered random variable. For this reason, we}
 introduce the empirical measure associate to the random Winfree model
\[
f^{(N)}(t, \theta, \nu, z) = \left[ \dfrac{1}{N} \sum_{i=1}^N \delta(\theta-\theta_i(t,z){\color{black})}\right]g(\nu).
\]

\begin{figure}
\centering
\hspace{-.4cm}
\includegraphics[width=0.36\textwidth]{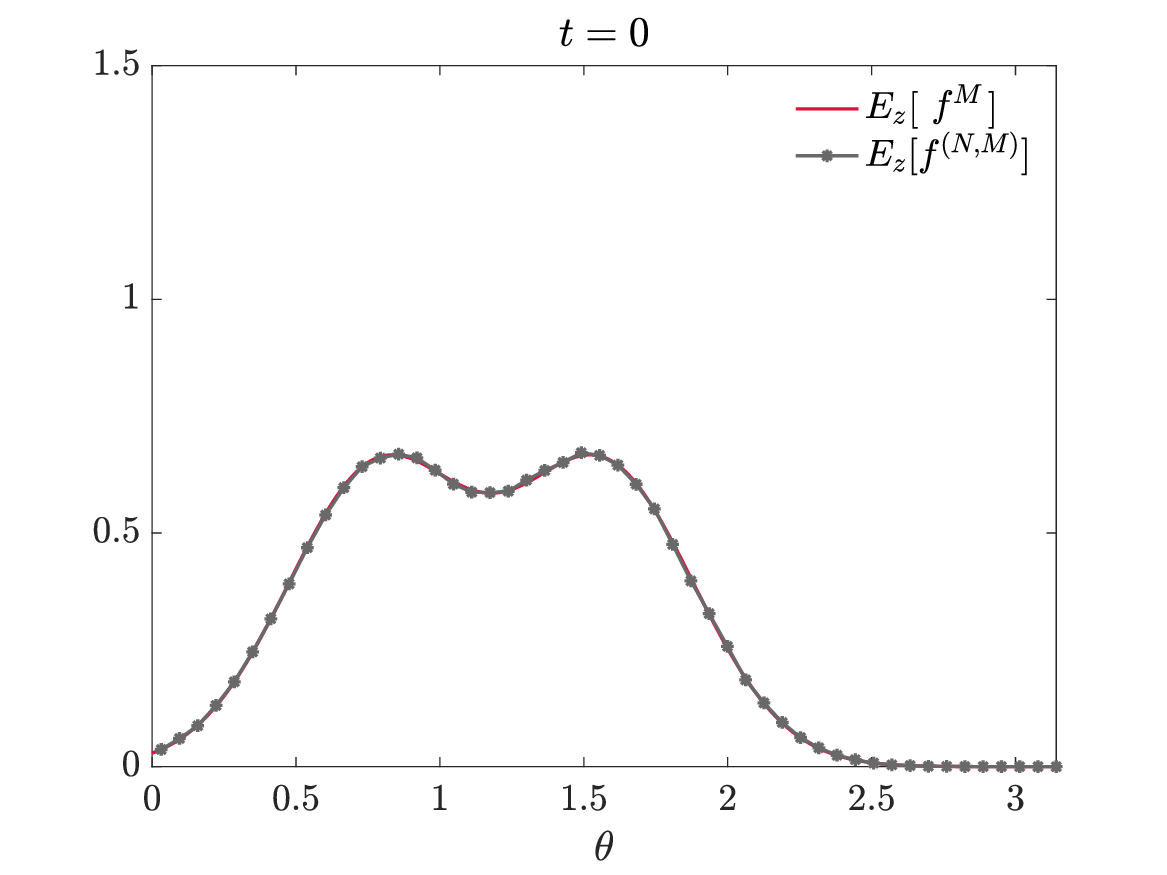}
\hspace{-.75cm}
\includegraphics[width=0.36\textwidth]{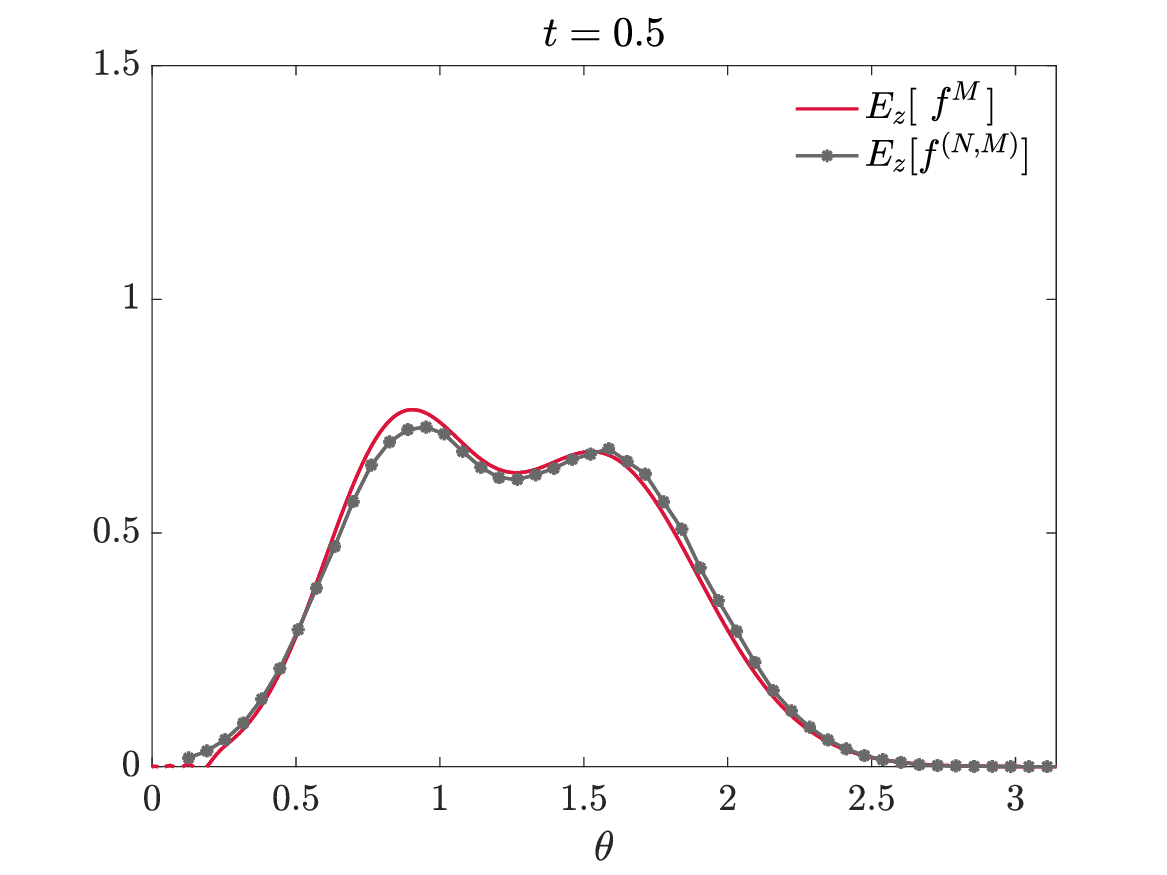}
\hspace{-.75cm}
\includegraphics[width=0.36\textwidth]{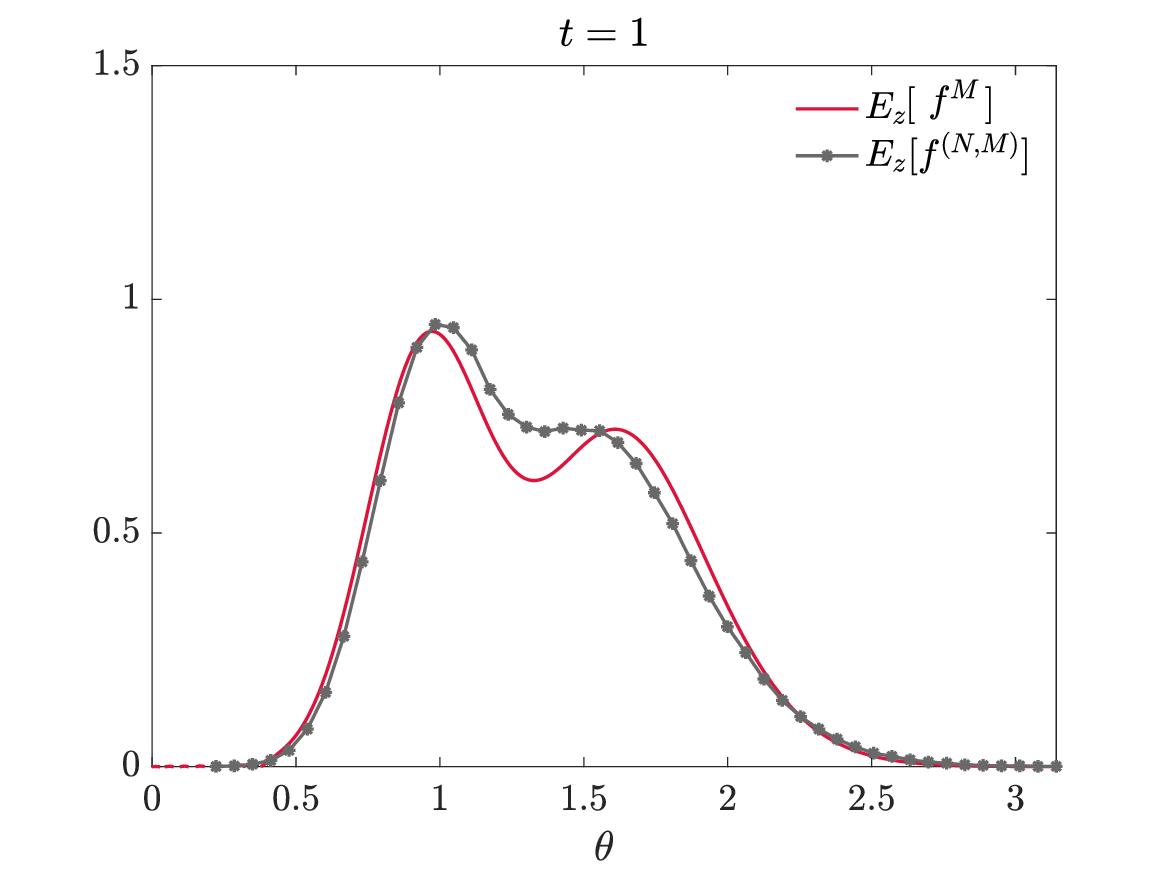}\\
\hspace{-.4cm}
\includegraphics[width=0.36\textwidth]{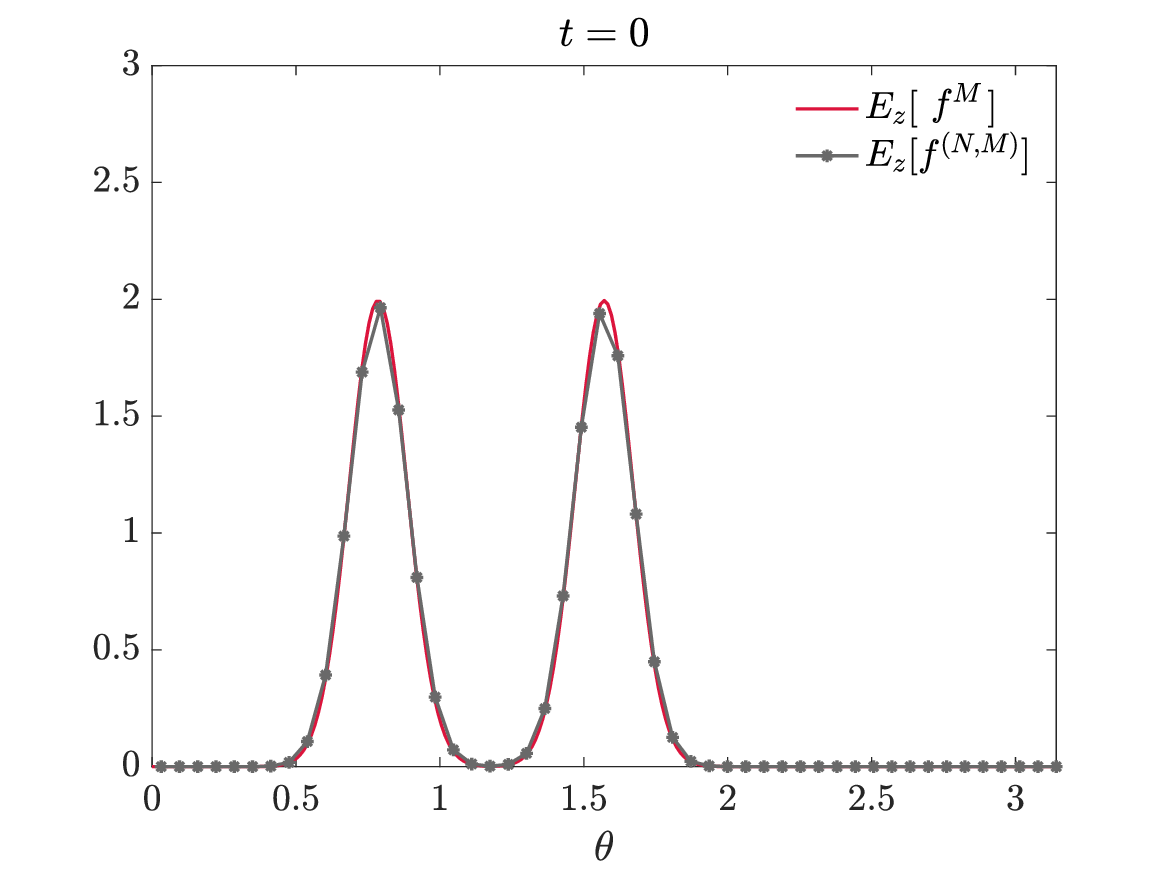}
\hspace{-.75cm}
\includegraphics[width=0.36\textwidth]{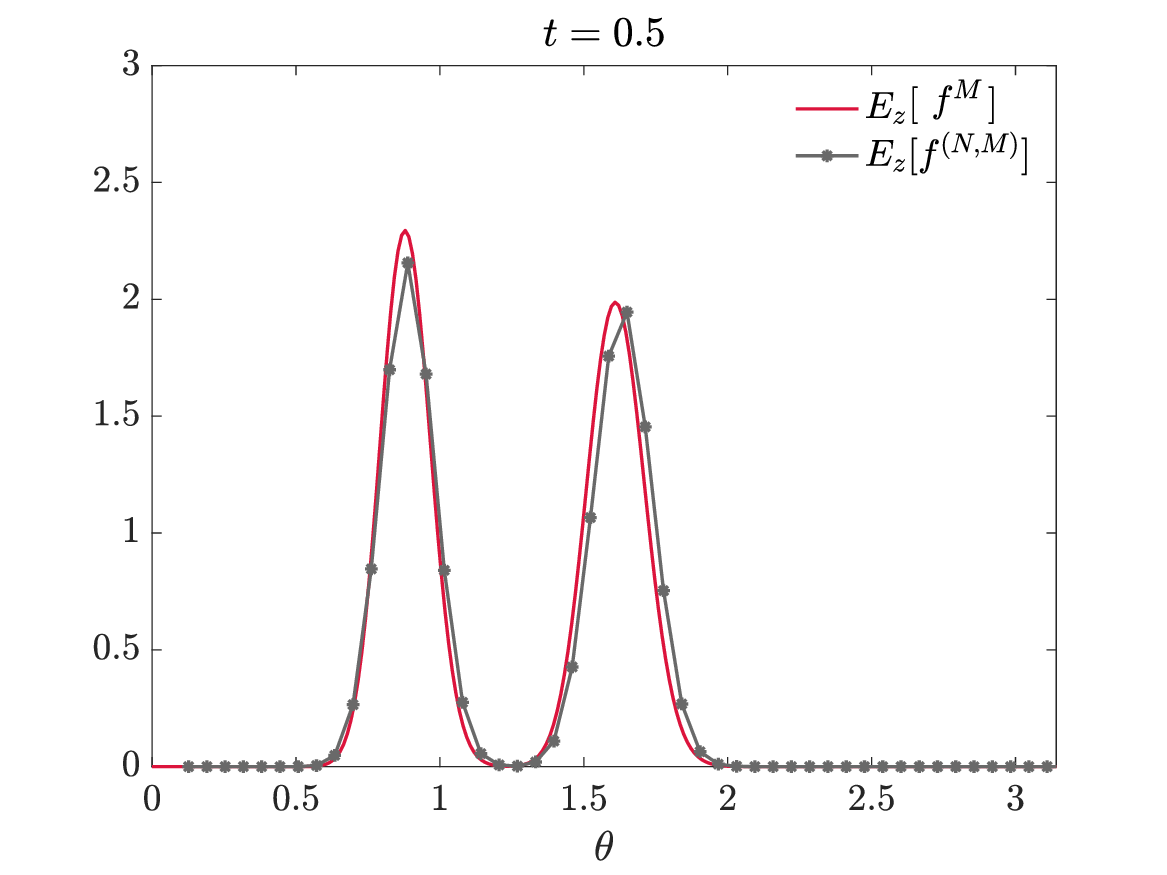}
\hspace{-.75cm}
\includegraphics[width=0.36\textwidth]{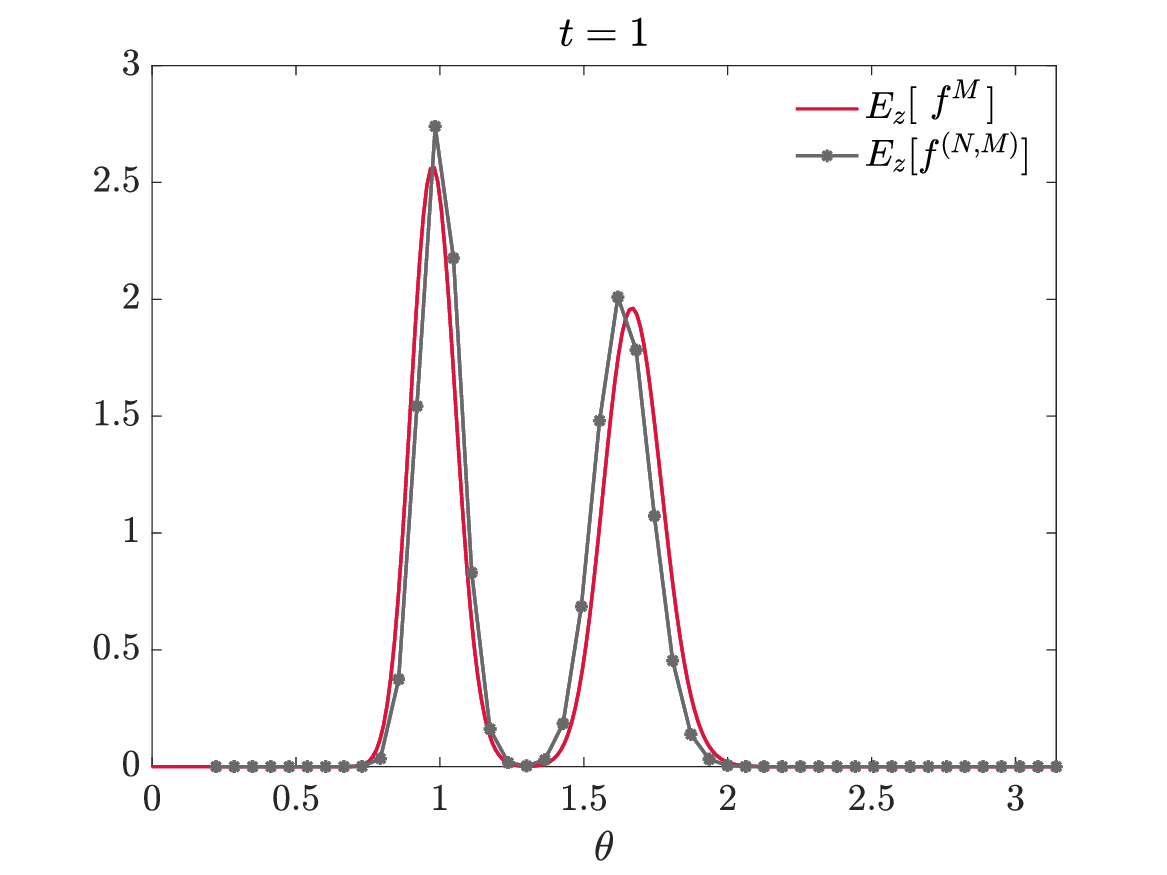}\\
\caption{We compare the evolution of the expected solution of the random kinetic Winfree model $\mathbb E_z[f]$ with its particle approximation $\mathbb E_z[f^{(N,M)}]$. In both cases we considered $z \sim \mathcal U([1,3])$ and we approximate the dynamics over the time interval $[0,1]$ with $\Delta t = \Delta \theta^2$ by introducing a space discretization of the interval $[0,2\pi]$ obtained with $N_\theta = 101$ grid points. We solved \eqref{eq:evo_fh} and the system of sG projections  \eqref{eq:thetaM}  $N = 10^4$. In both cases we fixed $M=2$ and the initial distribution \eqref{eq:f0} with $\bar \theta_1 = \pi/4$, $\bar \theta_2 = \pi/2$ and $\sigma_0^2 = \frac{1}{10}$ (top row) and $\sigma_0^2 = \frac{1}{100}$ (bottom row). }
	\label{fig:MF}
\end{figure}

Following the approach in \cite{CPZ,CZ,MTZ} we may study the convergence of the empirical measure
\[
f^{(N,M)}(t, \theta, \nu, z) = \left[ \dfrac{1}{N} \sum_{i=1}^N \delta(\theta-\theta_i^M(t,z){\color{black})}\right] g(\nu),
\]
to $f^{(N)}(t, \theta, \nu)$ as $M\to +\infty$ for all $t\ge0$, and $\theta_i^M(t,z)$ is a solution to
\begin{align}
\label{eq:thetsG}
\dfrac{d}{dt} \theta_i^M(t,z) = \nu_i + \dfrac{\kappa}{N} \sum_{j=1}^N a(z)(1+\cos\theta_j^M(t,z))^z \sin\theta_i^M,\quad\forall~ i \in [N],
\end{align}
where
\[ \theta_i^M = \sum_{k=0}^M \hat{\theta}_{i,k}\Phi_k(z). \]
The sG approximation of \eqref{eq:thetsG} is therefore given for all $h=0,\dots,M$ by
\begin{align}
\label{eq:thetaM}
\dfrac{d}{dt} \hat{\theta}_{i,h}(t) = \nu_i \delta_{0h} + \dfrac{\kappa}{N} \sum_{i=1}^N \sum_{k=0}^M E_{hk}(\theta_j^M, \theta_i^M),
\end{align}
where we have
\[
E_{hk}(\theta_j^M,\theta_i^M) =\int_{[1,+\infty)}a(z)(1+\cos\theta_j^M(t,z))^z \sin\left( \sum_{k=0}^M\hat{\theta}_{i,k}\Phi_k(z)\right) \Phi_h(z)\rho(z)dz.
\]
In the following, we consider the case $\nu \equiv 0$ and the initial distribution
\begin{align}
\label{eq:f0}
f_0(\theta) = \dfrac{1}{2\sqrt{2\pi \sigma_0^2}}\exp\left\{-\dfrac{|\theta-\bar\theta_1|^2}{2\sigma_0^2}\right\} +
\dfrac{1}{2\sqrt{2\pi \sigma_0^2}}\exp\left\{-\dfrac{|\theta-\bar\theta_2|^2}{2\sigma_0^2}\right\},
\end{align}
with
\[ \bar \theta_1 = \frac{\pi}{4}, \quad \bar \theta_2 = \frac{\pi}{2}. \]
In Figure \ref{fig:MF}, we represent the evolution of the expected value $\mathbb E_z[f^M(t, \theta, z)]$ where the initial distribution is chosen according to \eqref{eq:f0} and $\sigma_0^2 = \frac{1}{10}$ (top row), or $\sigma_0^2 = \frac{1}{100}$ (bottom row). We considered an uncertain exponent $z = h+2$, with $h \sim \mathcal U([-1,1])$ and the polynomial basis is therefore considered as Legendre polynomials, see \cite{X}.  We solve \eqref{eq:evo_fh} over a computational grid obtained from a homogeneous discretization of $[0,2\pi]$ with $N_\theta = 101$ grid points. The time discretization of the interval $[0,T]$ has been fixed as $\Delta t = \Delta \theta^2$, being $\Delta \theta$ the mesh size. Over the same interval we reconstruct the expected density of the interacting system $\mathbb E_z[f^{(N,M)}(t, \theta, z)]$ with $N  = 10^4$ \textcolor{black}{by means of a reconstruction of all the realizations in terms of the uncertainty variable $\{\theta_i^M(t,z)\}_{i=1}^N$ through a Gauss-Legendre scheme. In particular, the expectation is approximated as
\[
\mathbb E[f](t,\theta,\nu) \approx \dfrac{1}{N} \sum_{i=1}^N \int_{[0,\infty)}S_{\Delta \theta}(\theta-\theta_i^M(t,z))\rho(z)dz,
\]
being $S_{\Delta \theta}(\cdot)\ge0$ a smoothing function such that
\[
\Delta \theta \int_{0}^{2\pi} S_{\Delta \theta}(\theta)d\theta = 1.
\]
In the presented tests, we considered $S_{\Delta \theta}(\theta) = \chi(|\theta|\le \Delta \theta/2)/\Delta \theta$, where $\chi(\cdot)$ is the indicator function, which corresponds to the standard histogram reconstruction, see \cite{CPZ,PTZ}.} We may observe that the kinetic density correctly approximates the particle density in both the considered regimes.
\textcolor{black}{ In Figure \ref{fig:bands}, we present the obtained expected profiles together with the confidence bands obtained by the estimated variance.
\begin{figure}
\includegraphics[scale = 0.35]{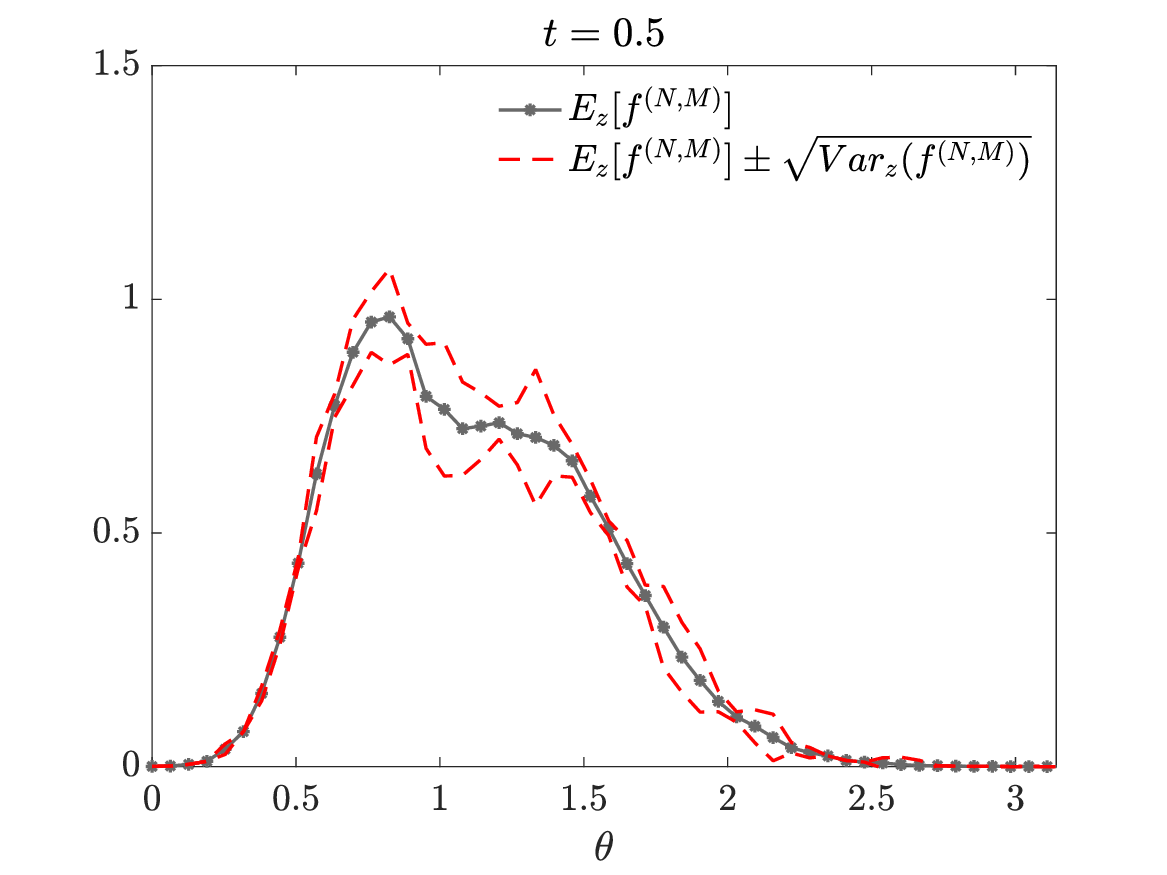}
\includegraphics[scale = 0.35]{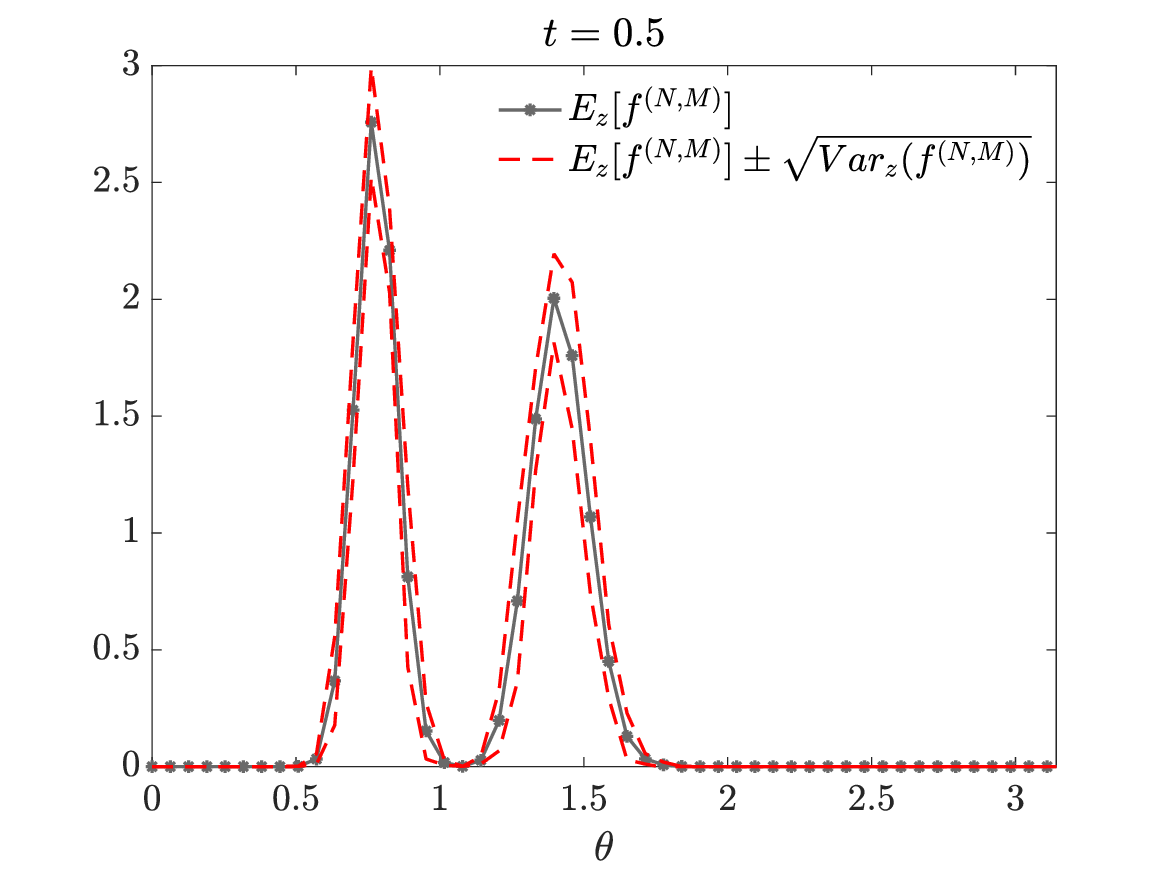}
\caption{\textcolor{black}{We depict $\mathbb E_z[f^{(N,M)}]$ together with confidence bands based on the approximation of the variance $ Var_z(f^{(N,M)})$ at time $t = 0.5$, $N = 10^4$, $M = 2$.  We consider as before $z \sim \mathcal U([1,3])$, $\sigma^2_0 = \frac{1}{10}$ (left) and $\sigma^2_0 = \frac{1}{100}$ (right). }}
\label{fig:bands}
\end{figure}}


\subsection{Regularity propagation}

We evaluate the propagation of $L^2$-regularity in the random field by means of the following error
\begin{align}
\label{eq:error_L2}
\textrm{Error} = \| \langle \varphi,f^{(N)} \rangle -  \langle \varphi,f^{(N,M)}\rangle \|_{L^2([1,+\infty))},
\end{align}
where $\varphi(\cdot):[0,2\pi]\to \mathbb R_+$ a test function. In the following we concentrate on the case $\varphi(\theta) = \theta^2$ which defines the uncertain temperature of the system
\begin{align}
\label{eq:Temp}
\mathcal T(t,z) = \int_{\mathbb R\times[0,2\pi]}(\theta-u)^2 f(t, \theta, \nu, z) g(\nu)d\theta d\nu,
\end{align}
being $g(\cdot)$ and $u$ the distribution of the natural frequencies and the expectation of $\theta$, respectively. In the case of homogeneous frequencies, we consider $N=10^4$ particles and we compute the evolution of \eqref{eq:Temp} up to time $T=1$ with a time step $\Delta t = 10^{-2}$. We considered both the uniform case $z = h+2$, with $h \sim \mathcal U([-1,1])$ and the Gaussian case where $z = h^2 + \frac{3}{2}$, with $h\sim \mathcal N(0,1)$, such that in both cases we get $\mathbb E[z] = 2$ but in the Gaussian case the support of the random variable is not bounded.  It is worth mentioning that in the case of uniformly distributed uncertainty we consider a Legendre polynomial expansion whereas, in the Gaussian case, we consider a Hermite polynomial expansion. In Figure \ref{fig:error}, we report the convergence in the random field based on a reference temperature $\mathcal T^{\textrm{ref}}$ at time $T = 1$ computed with an expansion of order $M = 25$ and $N = 10^4$ particles. We reported the error for several $M=0, \dots, 9$ for two choices of coupling strengths $\kappa = \frac{1}{10}$ and $\kappa = 1$. We may observe how, for increasing $\kappa$, the method has spectral accuracy in the uniform case as discussed in the previous sections. Therefore, for an increasing coupling strength we observe spectral accuracy and therefore, spectral accuracy in terms of the death state. On the other hand, in the case of unbounded support, corresponding to the Gaussian case, we do not expect the same regularity in the random field since the error saturates at $M = 6$ for the same coupling strength.

\begin{figure}
\centering
\includegraphics[width=0.45\textwidth]{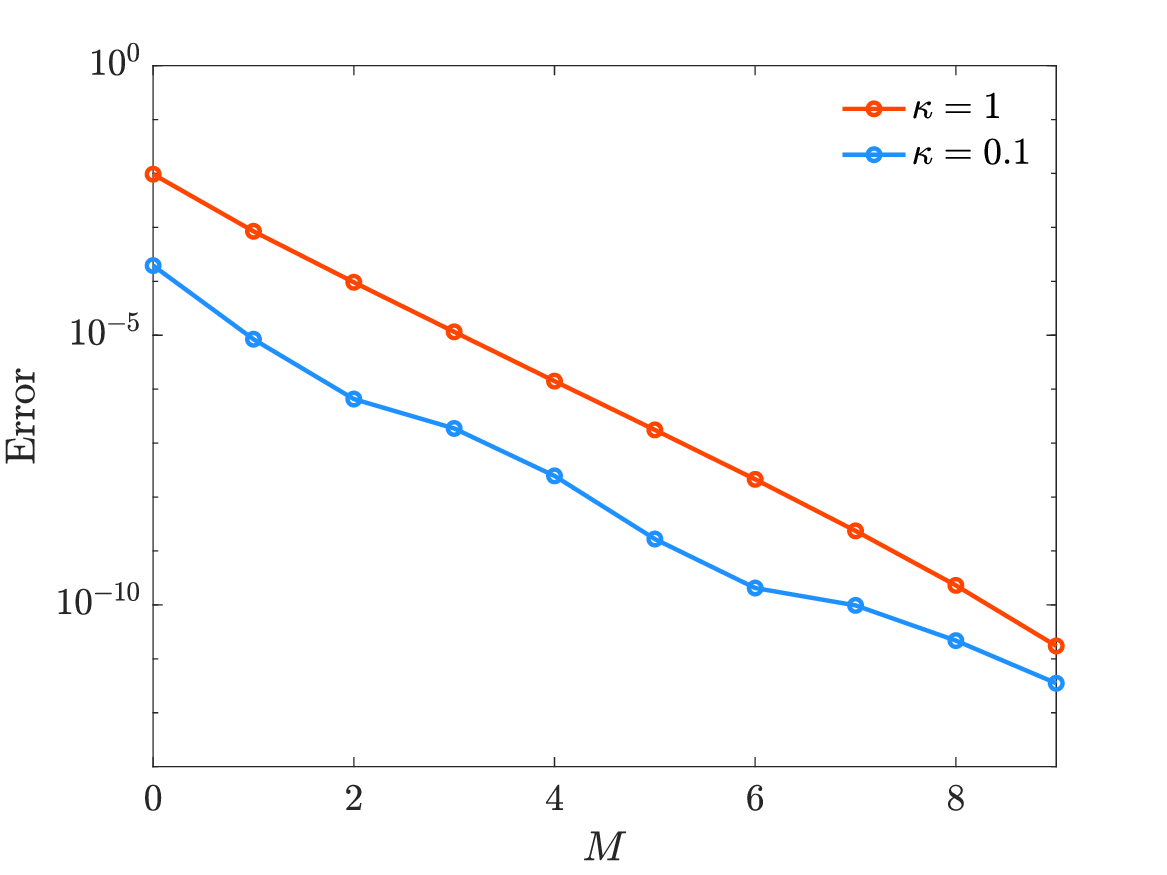}
\includegraphics[width=0.45\textwidth]{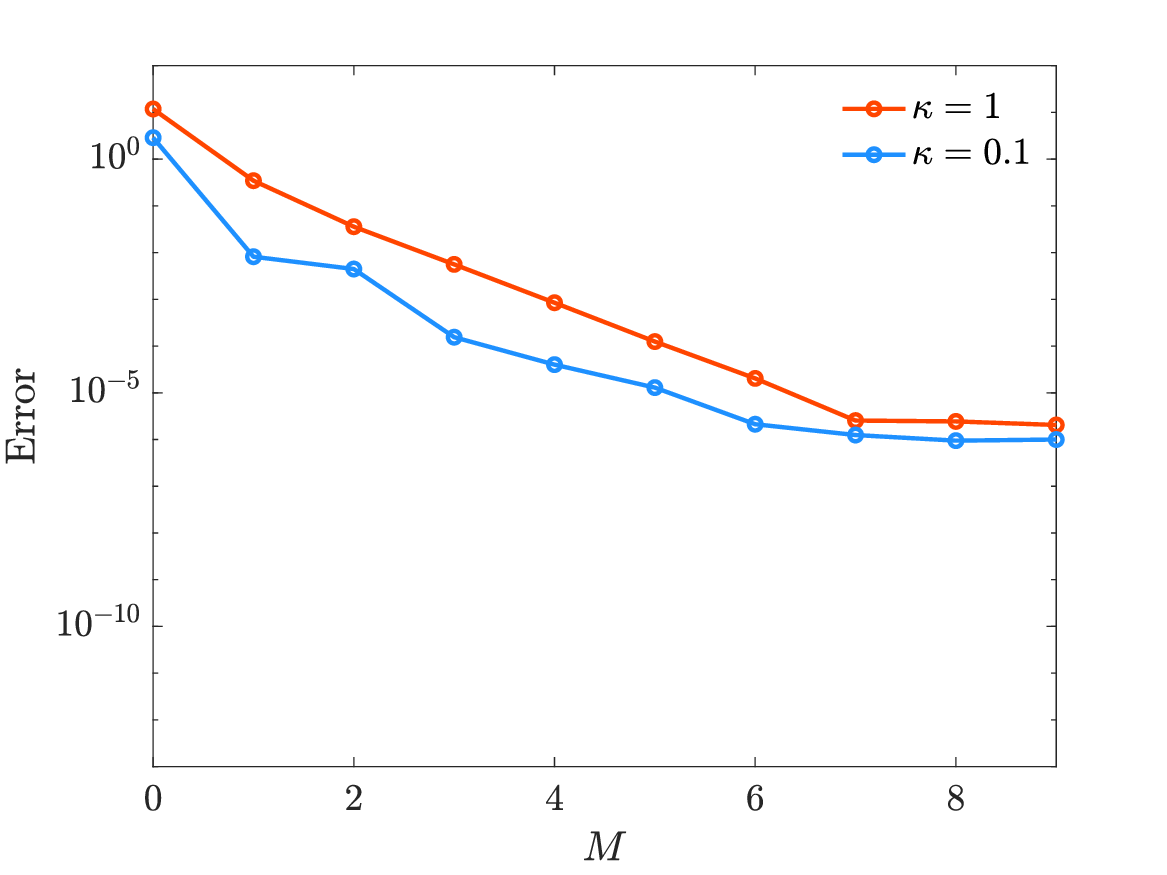}  \\
\caption{Convergence of the uncertain temperature \eqref{eq:Temp} in terms of the error in \eqref{eq:error_L2} based on a reference temperature $\mathcal T^{\textrm{ref}}$ at time $T = 1$ computed at the particle level with $N = 10^4$ particles and $M = 25$. We considered a uniformly distributed uncertain exponent $z = h+2$, $h\sim \mathcal U([-1,1])$ (left) and a Gaussian exponent $z = h^2 + \frac{3}{2}$, $h \sim \mathcal N(0,1)$. In both cases we considered two coupling strengths $\kappa = \frac{1}{10}$ and $\kappa = 1$.  }
\label{fig:error}
\end{figure}

\section{Conclusion} \label{sec:7}
\setcounter{equation}{0}
In this paper, we have studied measure-valued death state and local sensitivity analysis for the random Winfree and its kinetic mean-field models with uncertain high-order couplings.
For the proposed models, we study several sufficient frameworks leading to the oscillator death in which rotation numbers tend to zero asymptotically. In particular, for the random Winfree model, we show the existence of bounded trapping set so that the rotation number of each oscillator tends to zero asymptotically. In contrast, for the random kinetic Winfree model, we also present sufficient framework leading to the oscillator death. For the propagation of regularity in one-dimensional random space, we show that the $H_z^k$-regularity of the solution process can be bounded by that of initial data in any finite-time interval. We also provide several numerical tests and compare them with the analytical results. As noted in this paper, we only focus on the oscillator death which emerges as one of possible asymptotic patterns. However, there are other possible asymptotic patterns such as incoherent state, phase-locked state and chimera state etc. Thus, it will be very interesting to investigate the basin of attractions to these asymptotic patterns. Moreover, extension of results to the pulse-coupled models will be also a challenging problem. We leave these issues for a future work.


\newpage

\appendix

\section{Proof of Proposition \ref{P5.1}}\label{Proof_P5.1}
\noindent We use the method of induction.  For an integer $l \geq 1$, note that
\[ \partial_\theta^l L[f](t, \theta, z) = -\kappa \big(\partial_\theta^l \sin\theta\big) \int_{\bbt\times\bbr} I(\theta_*, z) f(t, \theta_*, \nu_*, z) g(\nu_*)  d\theta_* d\nu_*. \]
This leads to
	\begin{align}\label{D-3-0}
		\|\partial_\theta^l L[f](t)\|_{L^\infty_{\theta,z}} \leq C\kappa,\quad \forall~t \geq 0.
	\end{align}

\noindent $\bullet$~(Initial step):~First we use \eqref{D-0} to see
\begin{align}
\begin{aligned} \label{D-3-1}
\partial_t (|f|^p) &= \partial_t \Big ( (|f|^2)^{\frac{p}{2}} \Big) =   \partial_t \Big ( (f^2)^{\frac{p}{2}} \Big) = \frac{p}{2}  |f|^{p-2} (2f \partial_t f)  = p |f|^{p-2} f \partial_t f   \\
&= -p |f|^{p-2} f \partial_\theta(f L[f]).
\end{aligned}
\end{align}
Similarly, we also have
\begin{align} \label{D-3-2}
\partial_\theta (|f|^p) = p |f|^{p-2} f \partial_\theta f.
\end{align}
We differentiate $\| f(t)\|_{L^p_{\theta, \nu, z}}^p$ with respect to $t$ using \eqref{D-3-1}, \eqref{D-3-2} and the integration by parts to find
	\begin{align}
	\begin{aligned} \label{K-1-1}
			\partial_t \| f(t)\|_{L^p_{\theta, \nu,z}}^p &=  \int_{\bbt\times\bbr\times[1,\infty)} \partial_t ( |f|^p) \rho(z)g(\nu) d\theta d\nu dz \\
			&= -\int_{\bbt\times\bbr\times[1,\infty)} p|f|^{p-2} f \partial_\theta \left( fL[f] \right)\rho(z) g(\nu) d\theta d\nu dz    \\
			&=  -\int_{\bbt\times\bbr\times[1,\infty)} p|f|^{p-2} f \Big( L[f]\partial_\theta f + f\partial_\theta L[f] \Big)\rho(z) g(\nu)  d\theta d\nu dz \\
			&=  -\int_{\bbt\times\bbr\times[1,\infty)}\left( L[f] \partial_\theta |f|^p +p|f|^p \partial_\theta L[f] \right)\rho(z) g(\nu)  d\theta d\nu dz \\
			&=(1-p) \int_{\bbt\times[1,\infty)} \rho(z) \left( \int_\bbr g(\nu) |f|^p d\nu \right) \partial_\theta L[f] dzd\theta \\
			&\leq p\|\partial_\theta L[f](\cdot,t)\|_{L^\infty_{\theta, z}} \cdot \left\| \int_\bbr g |f|^p  d\nu \right\|_{L^1_{\theta,z}} \leq C p\kappa \| f(t)\|_{L^p_{\theta, \nu, z}}^p,
		\end{aligned}
	\end{align}
i.e., one has
\[ \partial_t \| f(t)\|_{L^p_{\theta, \nu,z}} \leq C \kappa  \| f(t)\|_{L^p_{\theta, \nu, z}}, \quad \forall~t > 0. \]
This yields the estimate \eqref{D-2} for $k = 0$:
\[ \| f(t)\|_{L^p_{\theta, \nu,z}} \leq e^{C  \kappa t} \| f^{in} \|_{L^p_{\theta, \nu,z}}, \quad  \forall~t \geq 0. \]

\noindent $\bullet$~(Induction step):~Suppose that the estimate \eqref{D-2} holds for $k=0, \cdots, l-1$, and note that $ (\partial_\theta^l f)$ satisfies
\begin{align} \label{D-3-3}
\partial_t  (\partial_\theta^l f) + \partial_\theta^{l +1} (f L[f])) = 0.
\end{align}
Then, we use \eqref{D-3-3} and the same argument in \eqref{D-3-1} to find
\begin{align} \label{D-3-4}
\partial_t ( |\partial_\theta^l f|^p ) = -p | \partial_\theta^l f|^{p-2} (\partial_\theta^l f) \partial_\theta^{l +1} (f L[f])).
\end{align}
Now we use \eqref{D-3-4}, the same argument as in \eqref{K-1-1}  and the binomial theorem to find
\begin{align}
\begin{aligned} \label{K-1-5}
&\partial_t \|\partial_\theta^l f(t)\|_{L^p_{\theta, \nu, z}}^p  \\
& \hspace{1cm} = \int_{\bbt \times \bbr \times [1,\infty)} \partial_t |\partial_\theta^l f(t)|^{p} \rho gdz d\nu d\theta \\
& \hspace{1cm} = -\int_{\bbt\times\bbr\times[1,\infty)} p|\partial_\theta^l f|^{p-2} (\partial_\theta^l f)  \partial_\theta^{l +1} \left( fL[f] \right)\rho gdz d\nu d\theta \\
& \hspace{1cm} = -\int_{\bbt\times\bbr\times[1,\infty)} p|\partial_\theta^l f|^{p-2} (\partial_\theta^l f) \sum_{i=0}^{l +1} \binom{l+1}{i} \left( \partial_\theta^i f\cdot\partial_\theta^{l +1-i} L[f] \right)\rho gdz d\nu d\theta \\
& \hspace{1cm}= -\int_{\bbt\times\bbr\times[1,\infty)} p|\partial_\theta^l f|^{p-2} (\partial_\theta^l f) \left( (\partial_\theta^{l+1} f) L[f] + (l+ 1) (\partial_\theta^l f)(\partial_\theta L[f]) \right) \rho gdz d\nu d\theta \\
&  \hspace{1.4cm} -\int_{\bbt\times\bbr\times[1,\infty)} p|\partial_\theta^l f|^{p-2} (\partial_\theta^l f ) \sum_{i=0}^{l-1} \binom{l +1}{i} \left( (\partial_\theta^i f) (\partial_\theta^{l +1-i} L[f] ) \right)\rho gdz d\nu d\theta \\
&  \hspace{1cm}=: \mathcal J_{11} +\mathcal J_{12},
\end{aligned}
\end{align}
where we use the following relation:
\begin{align*}
\begin{aligned}
&\sum_{i=0}^{l +1} \binom{l+1}{i} (\partial_\theta^i f ) \cdot (\partial_\theta^{l +1-i} L[f]) \\
&\hspace{1cm} = ( \partial_\theta^{l+1} f)( L[f]) + (l+1)( \partial_\theta^l f) (\partial_\theta L[f] ) + \sum_{i=0}^{l-1} \binom{l +1}{i} \left( \partial_\theta^i f \right) \left(\partial_\theta^{l +1-i} L[f] \right).
\end{aligned}
\end{align*}
Below, we estimate the term ${\mathcal J}_{1i}$ one by one. \newline

	\vspace{0.2cm}

	\noindent$\diamond$ (Estimate on $\mathcal J_{11}$):~Similar to \eqref{K-1-1}, we use integration by parts and \eqref{D-3-0} to obtain
	\begin{align}
	\begin{aligned} \label{D-4}
		\mathcal J_{11} &= -\int_{\bbt\times\bbr\times[1,\infty)}\Big(\partial_\theta\big( |\partial_\theta^l f|^p\big)\cdot L[f]+p(l+1)|\partial_\theta^l f|^p\partial_\theta L[f]\Big)\rho gdz d\nu d\theta\\
		&= (1-p(l+1)) \int_{\bbt\times[1,\infty)}\bigg( \int_\bbr |\partial_\theta^l f|^pgd\nu \bigg) \partial_\theta L[f]\rho dz d\theta \\
		&\leq p(l +1) \big\| \partial_\theta L[f](t)\big\|_{L^\infty_{\theta, z}} \cdot \bigg\| \int_\bbr |\partial_\theta^l f|^pgd\nu \bigg\|_{L^1_{\theta,z}} \\
		& \leq p(l +1) C\kappa \|\partial_\theta^l f(t)\|_{L^p_{\theta, z,\nu}}^p.
	\end{aligned}
	\end{align}
	\noindent$\diamond$ (Estimate on $\mathcal J_{12}$): We use the relation
	\begin{align} \label{K-1-3}
		\binom{l+1}{i} \leq 2^l, \quad\mbox{for all}~~i=0, \cdots, l+1,
	\end{align}
	\eqref{D-3-0}, and H\"older's inequality to find
	\begin{align}
	\begin{aligned} \label{D-5}
		\mathcal J_{12} &\leq 2^l p \sum_{i=0}^{l-1} \int_{\bbt\times\bbr\times[1,\infty)} \big|\partial_\theta^l f\big|^{p-1} \cdot \big| \partial_\theta^i f \big| \cdot \big|\partial_\theta^{l+1-i} L[f] \big|\rho gdz d\nu d\theta \\
		&= 2^l p \sum_{i=0}^{l-1} \int_{\bbt\times[1,\infty)}\bigg( \int_\bbr \big|\partial_\theta^l f\big|^{p-1} \big| \partial_\theta^i f \big|gd\nu \bigg) \big|\partial_\theta^{l+1-i} L[f] \big|\rho dz d\theta \\
		&\leq 2^l pC\kappa \sum_{i=0}^{l-1} \left\| \big|\partial_\theta^l f\big|^{p-1} \big| \partial_\theta^i f \big| \right\|_{L^1_{\theta, \nu, z}} \\
		& \leq 2^l pC\kappa \big\|\partial_\theta^l f(t)\big\|_{L^p_{\theta, \nu, z}}^{p-1} \sum_{i=0}^{l-1} \big\| \partial_\theta^i f(t) \big\|_{L^p_{\theta, \nu, z}}.
	\end{aligned}
	\end{align}
	We substitute all the estimates \eqref{D-4}, \eqref{D-5} on $\mathcal J_{11}$ and $\mathcal J_{12}$ into \eqref{K-1-5} and use the induction hypothesis to obtain
	\begin{align*}
		\partial_t \|\partial_\theta^l f(t)\|_{L^p_{\theta, \nu, z}} &\leq (l +1) C\kappa \|\partial_\theta^l f(t)\|_{L^p_{\theta, \nu, z}} +2^lC\kappa \sum_{i=0}^{l -1} \big\| \partial_\theta^i f(t) \big\|_{L^p_{\theta, \nu, z}} \\
		&\leq (l +1) C\kappa \|\partial_\theta^l f(t)\|_{L^p_{\theta, \nu, z}} +2^l C\kappa \sum_{i=0}^{l-1} \Lambda_{0,i} e^{(i+1)C\kappa t} \sum_{j=0}^i \big\| \partial_\theta^j f^{\mathrm{\mathrm{in}}} \big\|_{L^p_{\theta, \nu, z}} \\
		&\leq (l +1) C\kappa \|\partial_\theta^l f(t)\|_{L^p_{\theta, \nu, z}} +2^lC\kappa e^{lC\kappa t} \bigg(\sum_{i=0}^{l-1} \Lambda_{0,i} \bigg) \bigg( \sum_{j=0}^{l-1}  \big\| \partial_\theta^j f^{\mathrm{\mathrm{in}}} \big\|_{L^p_{\theta, \nu, z}} \bigg) \\
		&= (l +1) C\kappa \|\partial_\theta^l f(t)\|_{L^p_{\theta, \nu, z}} +C\kappa \Lambda_{0,l} e^{lC\kappa t} \bigg( \sum_{j=0}^{l-1}  \big\| \partial_\theta^j f^{\mathrm{\mathrm{in}}} \big\|_{L^p_{\theta, \nu, z}} \bigg).
	\end{align*}
By  Gr\"onwall's inequality, we have
	\begin{align*}
		\|\partial_\theta^l f(t)\|_{L^p_{\theta, \nu, z}} \leq \bigg( \|\partial_\theta^l f^{\mathrm{\mathrm{in}}}\|_{L^p_{\theta, \nu,z}} +\Lambda_{0,l}  \sum_{j=0}^{l-1}  \big\| \partial_\theta^j f^{\mathrm{\mathrm{in}}} \big\|_{L^p_{\theta,\nu,z}} \bigg) e^{(l +1) C\kappa t} -\bigg( \Lambda_{0,l}  \sum_{j=0}^{l-1}  \big\| \partial_\theta^j f^{\mathrm{\mathrm{in}}} \big\|_{L^p_{\theta,\nu,z}} \bigg) e^{lC\kappa t}.
	\end{align*}
	This shows that the case $k=l $ holds. Therefore, we have the desired estimate \eqref{D-2} for all finite $p>1$. Since there is no $p$-dependency in the coefficient in the right hand side of \eqref{D-2}, we can derive the case of $p=\infty$ by taking the limit of $p\to\infty$ on \eqref{D-2}.

\section{Proof of Lemma \ref{L5.2}}\label{Proof_L5.2}
\noindent We split its proof into two cases:
\[ l = 0 \quad \mbox{and} \quad  l \geq 1. \]

\noindent $\bullet$~Case A ($l = 0$):  We use \eqref{D-0} to see
\begin{align} \label{K-1-6}
\begin{aligned}
& \partial_t \| \partial_z^k f\|_{L^p_{\theta, \nu, z}}^p \\
&= -\int_{\mathbb{T}\times\mathbb{R}\times[1,\infty)} p|\partial_z^k f|^{p-2} \partial_z^k f\cdot\partial_z^k \partial_\theta \left( fL[f] \right)\rho gdzd\nu d\theta \\
&= -\int_{\mathbb{T}\times\mathbb{R}\times[1,\infty)} p|\partial_z^k f|^{p-2} \partial_z^k f \left[ \sum_{i=0}^k \binom{k}{i} \left( \partial_z^i f\cdot \partial_z^{k-i} \partial_\theta L[f] +\partial_z^i \partial_\theta f\cdot\partial_z^{k-i} L[f] \right) \right]\rho gdz d\nu d\theta \\
&= -\int_{\mathbb{T}\times\mathbb{R}\times[1,\infty)} p|\partial_z^k f|^{p-2} \partial_z^k f \left( \partial_z^k f\cdot\partial_\theta L[f] +\partial_z^k \partial_\theta f\cdot L[f]\right)\rho gdz d\nu d\theta \\
&\hspace{0.3cm} -\int_{\mathbb{T}\times\mathbb{R}\times[1,\infty)} p|\partial_z^k f|^{p-2} \partial_z^k f \left[ \sum_{i=0}^{k-1} \binom{k}{i} \left( \partial_z^i f\cdot \partial_z^{k-i} \partial_\theta L[f] +\partial_z^i \partial_\theta f\cdot\partial_z^{k-i} L[f] \right) \right]\rho gdz d\nu d\theta \\
&=: \mathcal J_{21} +\mathcal J_{22}.
\end{aligned}
\end{align}
Below, we estimate the term ${\mathcal J}_{2i}$ one by one. \newline

\vspace{0.2cm}

\noindent$\diamond$ (Estimate of $\mathcal J_{21}$): We use the integration by parts and the relation \eqref{D-3-0} to get
	\begin{align} \label{K-1-7}
		\begin{aligned}
			\mathcal J_{21} &= -\int_{\mathbb{T}\times\mathbb{R}\times[1,\infty)} p |\partial_z^k f|^p\partial_\theta L[f]\cdot\rho gdz d\nu d\theta \\
			&\hspace{.4cm}-\int_{\mathbb{T}\times\mathbb{R}\times[1,\infty)}\partial_\theta\big(|\partial_z^k f|^p\big)\cdot L[f]\cdot\rho g dzd\nu d\theta \\
			&=(1-p)\int_{\mathbb T\times\mathbb R\times[1,\infty)}|\partial_z^kf|^p\partial_\theta L[f]\cdot\rho gdzd\nu d\theta \\
			& = (1-p) \int_{\mathbb{T}\times[1,\infty)}\left( \int_\bbr|\partial_z^k f|^pgd\nu \right) \partial_\theta L[f]\cdot\rho dzd\theta \\
			&\le p \left\|\partial_\theta L[f] \right\|_{L^\infty_{\theta,z}} \cdot \left\| \int_\bbr |\partial_z^k f|^pgd\nu \right\|_{L^1_{\theta,z}} \\
			&\le p C\kappa \| \partial_z^k f\|_{L^p_{\theta, \nu, z}}^p.
		\end{aligned}
	\end{align}

	\vspace{0.2cm}

	\noindent$\diamond$ (Estimate of $\mathcal J_{22}$): It follows from \eqref{K-1-3} and Lemma \ref{L5.1} that
	{\allowdisplaybreaks
	\begin{align}
			\mathcal J_{22} &\leq 2^{k-1} p \sum_{i=0}^{k-1} \int_{\mathbb{T}\times\mathbb{R}\times[1,\infty)}\big| \partial_z^k f \big|^{p-1} \big| \partial_z^i f\cdot\partial_z^{k-i} \partial_\theta L[f] +\partial_z^i\partial_\theta f\cdot \partial_z^{k-i} L[f] \big|\rho gdz d\nu d\theta \nonumber\\
			&\leq 2^{k-1} p \big\| \partial_z^k f \big\|_{L^p_{\theta, \nu, z}}^{p-1} \sum_{i=0}^{k-1} \left( \big\| \partial_z^i f \big\|_{L^\infty_{\theta, \nu, z}} \big\| \partial_z^{k-i} \partial_\theta L[f] \big\|_{L^p_{\theta,\nu,z}} +\big\|\partial_z^i \partial_\theta f \big\|_{L^\infty_{\theta, \nu, z}} \big\| \partial_z^{k-i} L[f] \big\|_{L^p_{\theta, \nu, z}} \right) \nonumber\\
			&\leq 2^k \pi p C\kappa M_{k,0,p} \big\| \partial_z^k f \big\|_{L^p_{\theta,\nu,z}}^{p-1} \sum_{i=0}^{k-1}  2^{k-i} \big\| f \big\|_{W^{k-i, p}_{\theta, \nu,z}} \label{K-1-8}\\
			&\leq 2^{2k+1} \pi p C\kappa M_{k,0,p}\big\| \partial_z^k f \big\|_{L^p_{\theta, \nu,z}}^{p-1} \big\| f \big\|_{W^{k, p}_{\theta, \nu,z}}\nonumber\\
			&\leq 2^{2k+1} \pi p C\kappa M_{k,0,p}\big\| \partial_z^k f \big\|_{L^p_{\theta, \nu,z}}^{p-1} \left( \big\| \partial_z^k f \big\|_{L^p_{\theta, \nu,z}} +2k\pi M_{k,0,p}\right),\nonumber
	\end{align}}
	where we used the following relation in the last inequality:
	\begin{align}
	\begin{aligned} \label{K-1-9}
			\big\| f \big\|_{W^{k, p}_{\theta, \nu, z}}^p &= \big\| \partial_z^k f \big\|_{L^p_{\theta, \nu, z}}^p +\sum_{i=0}^{k-1} \big\| \partial_z^i f \big\|_{L^p_{\theta, \nu, z}}^p\\
			&\le\big\|\partial_z^kf\big\|_{L^p_{\theta,\nu,z}}^p+\sum_{i=0}^{k-1}\big\|\partial_z^if\big\|_{L^\infty_{\theta,\nu,z}}^p\int_{\mathbb T\times\mathbb R\times[1,\infty)}\rho(z)g(\nu)d\theta d\nu dz\\
			&=\big\| \partial_z^k f \big\|_{L^p_{\theta, \nu, z}}^p +2\pi \sum_{i=0}^{k-1} \big\| \partial_z^i f \big\|_{L^\infty_{\theta, \nu, z}}^p\leq \big\| \partial_z^k f \big\|_{L^p_{\theta, \nu, z}}^p +2k\pi M_{k,0,p}^p.
	\end{aligned}
	\end{align}
	Now, we combine \eqref{K-1-6}, \eqref{K-1-7}, and \eqref{K-1-8} to obtain the desired estimate:
	\begin{align*}
		\partial_t \| \partial_z^k f\|_{L^p_{\theta, \nu, z}} \leq C\kappa \big( 1+2^{2k+1} \pi M_{k,0,p} \big) \| \partial_z^k f\|_{L^p_{\theta, \nu, z}}  +2^{2k+2}k\pi^2 C\kappa M_{k,0,p}^2.
	\end{align*}

\vspace{0.2cm}

\noindent $\bullet$~Case B ($l \geq 1$): ~It follows from \eqref{D-0} that
	\begin{align*}
		&\partial_t \|\partial_z^k \partial_\theta^l f\|_{L^p_{\theta, \nu, z}}^p  \\
		&\hspace{0.5cm} = -\int_{\bbt\times\bbr\times[1,\infty)} p|\partial_z^k \partial_\theta^l f|^{p-2} \partial_z^k\partial_\theta^l f\cdot\partial_z^k\partial_\theta^{l+1} \left( fL[f] \right)\cdot\rho gdz d\nu d\theta \\
		& \hspace{0.5cm} = -\int_{\bbt\times\bbr\times[1,\infty)} p|\partial_z^k \partial_\theta^l f|^{p-2} \partial_z^k\partial_\theta^l f \\
		&\hspace{2cm} \times\left[ \sum_{i=0}^k \sum_{j=0}^{l+1} \binom{k}{i} \binom{l+1}{j} \left( \partial_z^i \partial_\theta^j f\cdot\partial_z^{k-i} \partial_\theta^{l+1-j}L[f] \right) \right]\rho g dz d\nu d\theta \\
		&\hspace{0.5cm} = -\int_{\bbt\times\bbr\times[1,\infty)} p|\partial_z^k \partial_\theta^l f|^{p-2} \partial_z^k\partial_\theta^l f \left(\partial_z^k \partial_\theta^{l+1} f\cdot L[f] +(l+1)\partial_z^k \partial_\theta^l f\cdot\partial_\theta L[f] \right)\rho gdz d\nu d\theta \\
		& \hspace{0.5cm} \quad -\int_{\bbt\times\bbr\times[1,\infty)} p|\partial_z^k \partial_\theta^l f|^{p-2} \partial_z^k\partial_\theta^l f \sum_{j=0}^{l-1} \binom{l+1}{j} \left( \partial_z^k \partial_\theta^j f\cdot\partial_\theta^{l+1-j}L[f] \right)\rho gdz d\nu d\theta \\
		&\hspace{0.5cm} \quad -\int_{\bbt\times\bbr\times[1,\infty)} p|\partial_z^k \partial_\theta^l f|^{p-2} \partial_z^k\partial_\theta^l f \\
		&\hspace{2cm} \times\left[ \sum_{i=0}^{k-1} \sum_{j=0}^{l+1} \binom{k}{i} \binom{l+1}{j} \left( \partial_z^i \partial_\theta^j f\cdot\partial_z^{k-i} \partial_\theta^{l+1-j}L[f] \right) \right]\rho gdz d\nu d\theta \\
		&\hspace{0.5cm} =: \mathcal J_{31} +\mathcal J_{32} +\mathcal J_{33}.
	\end{align*}
	In the sequel, we estimate the term ${\mathcal J}_{3i}$ one by one. \newline

	\noindent$\diamond$ Case B.1 (Estimate of $\mathcal J_{31}$): Similar to \eqref{K-1-7}, one has
	\begin{align*}
		\mathcal J_{31} &= \int_{\bbt\times\bbr\times[1,\infty)}\big(1-p(l+1)\big) |\partial_z^k \partial_\theta^l f|^p \partial_\theta L[f]\cdot\rho gdz d\nu d\theta \leq p(l+1) C\kappa \|\partial_z^k \partial_\theta^l f\|_{L^p_{\theta, \nu,z}}^p.
	\end{align*}

	\noindent$\diamond$ Case B.2 (Estimate of $\mathcal J_{32}$): Again we use \eqref{K-1-3}, H\"older's equation and Lemma \ref{L5.1} to find
	\begin{align*}
		\mathcal J_{32} &\leq 2^l p \sum_{j=0}^{l-1} \int_{\bbt\times\bbr\times[1,\infty)} \big| \partial_z^k \partial_\theta^l f \big|^{p-1}   \big| \partial_z^k \partial_\theta^j f\cdot \partial_\theta^{l+1-j}L[f] \big|\rho gdz d\nu d\theta \\
		&= 2^l p \sum_{j=0}^{l-1} \int_{\bbt\times[1,\infty)}\bigg( \int_{\bbr} \big| \partial_z^k \partial_\theta^l f \big|^{p-1}   \big| \partial_z^k \partial_\theta^j f \big|gd\nu \bigg) \big| \partial_\theta^{l+1-j}L[f] \big|\rho dz d\theta \\
		&\leq 2^l pC\kappa \sum_{j=0}^{l-1} \bigg\|\int_{\bbr} \big| \partial_z^k \partial_\theta^l f \big|^{p-1}   \big| \partial_z^k \partial_\theta^j f \big|gd\nu \bigg\|_{L^1_{\theta, z}} = 2^l pC\kappa \sum_{j=0}^{l-1} \bigg\| \big| \partial_z^k \partial_\theta^l f \big|^{p-1}   \big| \partial_z^k \partial_\theta^j f \big| \bigg\|_{L^1_{\theta, \nu, z}} \\
		&\leq 2^l pC\kappa \big\| \partial_z^k \partial_\theta^l f \big\|^{p-1}_{L^p_{\theta, \nu, z}}  \sum_{j=0}^{l-1} \big\| \partial_z^k \partial_\theta^j f \big\|_{L^p_{\theta, \nu, z}} \leq l2^l pC\kappa M_{k,l,p}\big\| \partial_z^k \partial_\theta^l f \big\|^{p-1}_{L^p_{\theta, \nu, z}}.
	\end{align*}

	\noindent$\diamond$ Case B.3 (Estimate of $\mathcal J_{33}$): Note that
	\begin{align*}
		\mathcal J_{33} &\leq 2^{k+l-1} p \sum_{i=0}^{k-1} \sum_{j=0}^{l+1} \int_{\bbt\times\bbr\times[1,\infty)}\big| \partial_z^k \partial_\theta^lf \big|^{p-1}  \big| \partial_z^i \partial_\theta^j f\cdot\partial_z^{k-i} \partial_\theta^{l+1-j}L[f] \big|\rho gdz d\nu d\theta \\
		&\leq 2^{k+l-1} p \big\| \partial_z^k \partial_\theta^l f \big\|_{L^p_{\theta, \nu, z}}^{p-1} \sum_{i=0}^{k-1} \sum_{j=0}^{l+1}\big\| \partial_z^i \partial_\theta^j f \big\|_{L^\infty_{\theta, \nu,z}}\big\| \partial_z^{k-i} \partial_\theta^{l+1-j}L[f] \big\|_{L^p_{\theta, \nu, z}}\\
		&\leq 2^{k+l-1} \pi pC\kappa M_{k,l,p} \big\| \partial_z^k \partial_\theta^l f \big\|_{L^p_{\theta, \nu, z}}^{p-1} \sum_{i=0}^{k-1} \sum_{j=0}^{l+1} 2^{k-i} \big\| f \big\|_{W^{k-i, p}_{\theta, \nu, z}} \\
		&\leq (l+2) (1+2k\pi) 2^{2k+l} \pi pC\kappa M_{k,l,p}^2 \big\| \partial_z^k \partial_\theta^l f \big\|_{L^p_{\theta, \nu, z}}^{p-1},
	\end{align*}
	where we used \eqref{K-1-9} again in the last inequality. \newline

	As a result, we have the desired estimate for finite $p>1$:
	\begin{align}\label{K-1-10}
		\partial_t \|\partial_z^k \partial_\theta^l f\|_{L^p_{\theta, \nu, z}} \leq (l+1) C\kappa \|\partial_z^k \partial_\theta^l f\|_{L^p_{\theta, \nu, z}} +l2^l C\kappa M_{k,l,p} +(l+2) (1+2k\pi) 2^{2k+l} \pi C\kappa M_{k,l,p}^2.
	\end{align}
	By definition, we have
	\[M_{k,l,\infty}=\lim_{p\to\infty}M_{k,l,p},\quad\forall k\ge1,~~l\ge0.\]
	Hence, by taking the limit of $p\to\infty$ on \eqref{K-1-10}, we derive the desired result for $p=\infty$.

\end{document}